\documentclass[11pt,a4paper]{amsart}
\usepackage{amsmath}
\usepackage{amsthm} 
\usepackage{amssymb}
\usepackage{bbm}	
\usepackage{bm}		
\usepackage{mathtools} 
\usepackage{array}    
\usepackage{booktabs} 
\usepackage{makecell} 
\usepackage{url}
\usepackage{xcolor} 
\usepackage{amsaddr}

\usepackage[ruled,vlined]{algorithm2e}
\usepackage{stmaryrd}
\usepackage{mathrsfs}
\usepackage{caption}
\usepackage{graphicx}
\usepackage{subcaption} 
\usepackage{hyperref}

\newtheorem{assumption}{Assumption}
\newtheorem{corollary}{Corollary}
\newtheorem{lemma}{Lemma}
\newtheorem{notation}{Notation} 
\newtheorem{proposition}{Proposition}
\newtheorem{remark}{Remark}
\newtheorem{theorem}{Theorem}

\newcommand{\R}{\mathbb{R}}

\newcommand{\IntInterval}[2]{\llbracket #1, #2 \rrbracket} 
\newcommand{\En}{\mathcal{E}}

\newcommand{\python}{{\ttfamily Python }}
\newcommand{\mosek}{{\ttfamily mosek }}
\newcommand{\celer}{{\ttfamily celer }}
\newcommand{\cpp}{{\ttfamily C++ }}

\DeclarePairedDelimiter{\norm}{\lVert}{\rVert}	
\newcommand{\normad}[1]{\norm*{#1}}				

\newcommand{\argmax}{\mathop{\mathrm{argmax}}}

\newcommand{\diag}[1]{\mathop{\mathrm{diag}}\left[#1\right]}

\newcommand{\indic}{\mathbbm{1}_{n}} 
\newcommand{\indicm}{\mathbbm{1}}
\DeclareMathOperator{\KL}{\mathsf{KL}} 

\newcommand{\dmax}{\delta_\mathrm{max}}
\newcommand{\tree}{\mathscr{T}}
\newcommand{\leftt}{\mathbf{s}}
\newcommand{\rightt}{\mathbf{t}}
\newcommand{\roott}{\mathbf{r}}
\newcommand{\children}{\mathbf{c}}
\newcommand{\parent}{\mathbf{p}}

\newcommand{\mmu}{\bm{\mu}}
\newcommand{\nnu}{\bm{\nu}}
\newcommand{\ff}{\mathbf{f}}
\renewcommand{\gg}{\mathbf{g}}
\newcommand{\ww}{\mathbf{w}}
\newcommand{\aalpha}{\bm{\alpha}}
\newcommand{\oone}{\bm{1}}
\newcommand{\ttwo}{\bm{2}}
\newcommand{\ZZ}{\mathbb{Z}}

\renewcommand{\succeq}{\succcurlyeq}
\renewcommand{\preceq}{\preccurlyeq}

\newcommand{\margset}{\mathbf{U}(\mmu,\nnu)}
\newcommand{\costmat}{\mathbf{C}}
\newcommand{\tpplan}{\mathbf{P}}

\newcommand{\black}[1]{{\color{black}#1 }}

\title{A Network Simplex Framework for Discrete Unbalanced Optimal Transport}

\author{Gaetano Agazzotti}
\address{CEREMADE, Université Paris Dauphine-PSL, France}
\email{agazzotti@ceremade.dauphine.fr}

\author{Antonin Chambolle}
\address{CEREMADE, CNRS, Université Paris Dauphine-PSL, France
and Mokaplan, INRIA Paris}
\email{chambolle@ceremade.dauphine.fr}

\author{Clément W. Royer}
\address{LAMSADE, CNRS, Université Paris Dauphine-PSL, France}
\email{clement.royer@lamsade.dauphine.fr}

\thanks{Funding for this research was provided by
\emph{Agence Nationale de la Recherche} through a ``France 2030'' support
ANR-23-PEIA-0004 (PEPR PDE-AI), and through program ANR-23-IACL-0008
(PR[AI]RIE-PSAI).}

\date{October 9, 2026}

\begin{document}

\maketitle

\begin{abstract}
    The network simplex method is a classical algorithm for minimum cost flow 
    problems formulated as linear programs, that has attracted recent 
    interest from the optimal transport community. Indeed, discrete optimal 
    transport is a subclass of minimum cost flow for which network simplex 
    techniques compute exact solutions while leveraging the inherent problem 
    sparsity. However, extending these approaches to other, nonlinear optimal 
    transport formulations has yet to be thoroughly investigated.
    In this paper, we adapt the network simplex algorithm to unbalanced 
    optimal transport. Our proposed framework is applicable to various 
    unbalanced formulations, and can be implemented explicitly in the most 
    common cases, including semi-unbalanced optimal transport. The resulting 
    algorithms are memory-efficient, and perform competitively with existing 
    approaches, as shown by our experiments on a range of transport and 
    barycenter calculations.
\end{abstract}

\section{Introduction}

Optimal transport~\cite{santambrogio2015optimal,villani2008optimal} (OT) is a 
rich mathematical model that has drawn significant attention in the last decades, 
fueled by a number of applications in computer vision~\cite{bonneel2023survey} 
and data science~\cite{GPeyre_MCuturi_2019}. An optimal transport problem 
consists in computing a \textit{transport plan} to move mass from one measure 
to another so as to minimize a transport cost per unit of transported mass. 
In its simplest form, this problem is a linear program (possibly of infinite 
dimension), and can thus be tackled by existing linear programming 
techniques~\cite{GPeyre_MCuturi_2019}. In the discrete setting, OT belongs 
to a class of linear programs called minimum-cost flow problems, for which a 
number of algorithmic approaches have been proposed~\cite{nf1993orlin}. The
network simplex emerges as the best method for this class of problems in 
practice, despite other variants possessing better theoretical complexity 
guarantees~\cite{KCornelissen_BManthey_2020,PKovacs_2015}. 
Implementations of the network simplex used in optimal transport are also 
among the fastest schemes to solve discrete optimal transport 
problems~\cite{NBonneel_MvandePanne_SParis_WHeidrich_2011,pot}. Moreover, 
compared to first-order schemes such as Sinkhorn's 
algorithm~\cite{SinkhornKnopp}, that rely on entropic regularization, the 
network simplex method can exhibit sparser transport plans while solving the 
original problem exactly. We note that recent work has applied interior-point 
methods to optimal 
transport~\cite{SCipolla_JGondzio_FZanetti_2024,FZanetti_JGondzio_2023}, 
but that those methods only compute approximate solutions and require 
significant changes in order to exploit the (graphical) problem structure.

Standard OT formulations assume that the two measures involved have identical 
mass, in order for the problem to have a finite optimal value. This assumption 
does not always hold in practice. For instance, color transfer formulations do 
not necessarily assume identical mass between the source and target color 
distributions~\cite{MBlondel_VSeguy_ARolet_2018}. To circumvent this issue, 
a generalized version of optimal transport termed \textit{unbalanced optimal 
transport} (UOT) was introduced by Benamou~\cite{benamou2003numerical}. The 
UOT formulation removes marginal constraints and replaces them by penalty 
terms (also called relaxations) in the objective. Semi-unbalanced formulations 
(SUOT), that are obtained by relaxing only one of the marginal constraints, 
arise in Wasserstein gradient flow calculations~\cite{JKO1998,peyre2015entropic} 
as well as image restoration~\cite{galerne2025}.

As for classical optimal transport, Sinkhorn-type algorithms are an interesting alternative for solving UOT 
problems~\cite{LChizat_GPeyre_BSchmitzer_FXVialard_2018,sejourne2019sinkhorn}. 
However, as in OT, this approach outputs approximate solutions and typically 
dense transport plans. Other proposals have thus been developed for the specific 
case of quadratic and Kullback-Leibler relaxations~\cite{chapel2021unbalanced,
MEssid_JSolomon_2018}, including the so-called regularization path method 
that yields an exact solution to the UOT problem in the quadratic 
case~\cite{chapel2021unbalanced}.
Note that UOT with convex relaxation terms is a minimum cost flow problem
with convex costs, a topic of rich history in the network optimization 
community~\cite{DPBertsekas_1998}. Several algorithms termed ``relaxation 
methods'' have been proposed to tackle such flow problems with theoretical 
guarantees~\cite{vegh2016} and practical 
performance~\cite{bertsekas1997relaxation,deleone1999partitioned}. To the best 
of our knowledge, however, implementations of these algorithms are not readily 
available. In addition, these techniques are outperformed by network simplex 
variants such as that used in optimal transport packages in the linear 
case~\cite{PKovacs_2015}. As a result, one may wonder whether network 
simplex techniques can be extended to the UOT setting. 

In this paper, we propose an algorithmic framework to solve unbalanced and 
semi-unbalanced transport problems derived from the standard network simplex 
algorithm. Our framework leverages the bipartite graph structure inherent to 
optimal transport for updating primal and dual variables in a 
memory-efficient way. In addition, iterates are explicitly defined for 
standard formulations of unbalanced transport based on quadratic and 
Kullback-Leibler divergences. We provide a complexity analysis for our method 
in the former case, comparing it with the best known theoretical guarantees. 
In our numerical experiments, the algorithm competes with mature solvers 
dedicated to quadratic and Kullback-Leibler examples. We also use our method 
for barycenter calculations, where unbalanced optimal transport arises as a 
subtask, and find again that our method performs comparably to other schemes.

The remainder of this paper is organized as follows. Section~\ref{sec:uot} 
describes the unbalanced optimal transport paradigm. Section~\ref{sec:algo} 
describes our main algorithm and its connection to the network simplex method. 
Section~\ref{sec:particular-cases} derives explicit formulas for our
algorithm's iterates when quadratic and Kullback-Leibler relaxations are 
used. Section~\ref{sec:complexity} provides complexity guarantees for our 
algorithm. Section~\ref{sec:num-exp} showcases the practical benefits of our 
approach compared to existing implementations on unbalanced and 
semi-unbalanced problems. Section~\ref{sec:conc} summarizes the contents of 
the paper.

\paragraph{Notation} Throughout this paper, vectors are denoted with bold 
lowercase letters ($\bm{a}$) while matrices are denoted by uppercase letters 
($\mathbf{A}$). Given two matrices $\mathbf{A}$ and 
$\mathbf{B}$, $\mathbf{A}\!:\!\mathbf{B}$ denotes the Frobenius inner product. 
Given two vectors $\bm{u} \in \R^{n_1}$ and $\bm{v} \in \R^{n_2}$, we define 
$\bm{u} \oplus \bm{v} := [u_i+v_j]_{ij} \in \R^{n_1 \times n_2}$. Given two 
vectors $\bm{a}$ and $\bm{b}$ of same dimension, we let $\bm{a} \odot \bm{b}$ 
denote their componentwise product, and $\bm{a} \oslash \bm{b}$ denote their 
componentwise division (assuming $\bm{b}>0$ in the latter case). 
The convex conjugate of a convex function $\psi: \R^m \rightarrow \R$ will be 
denoted by $\psi^*$, where $\psi^*(\bm{x})=
\max_{\bm{y}\in\R^{m}} \langle \bm{x},\bm{y}\rangle - \psi(\bm{y})$ for 
every $\bm{x} \in \R^m$. When considering bipartite graphs of the form 
$(V \cup V',E)$, we will write non-oriented edges as $(i,j)$, where 
$i$ denotes an index in the first part of the graph and $j$ denotes a vertex 
from its second part.

\section{Discrete unbalanced optimal transport}
\label{sec:uot}

In this section, we provide background material on both classical and 
unbalanced optimal transport. Section~\ref{subsec:ot} introduces the optimal 
transport problem and explains its connection with minimum cost flow, 
while Section~\ref{subsec:uot} focuses on unbalanced optimal transport.

\subsection{Discrete optimal transport}
\label{subsec:ot}

We first define the discrete optimal transport problem and its connections 
with minimum cost flows, following the presentation of Peyr\'e and 
Cuturi~\cite[Sec. 3.5]{GPeyre_MCuturi_2019}.
Let $\mmu \in \R_+^{n_1}$ and $\nnu \in \R_+^{n_2}$ be two measures 
with the same mass (i.e. $\|\mmu\|_1=\|\nnu\|_1$). Given a nonnegative cost 
matrix $\costmat \in \R_+^{n_1 \times n_2}$, an optimal transport plan 
between $\mmu$ and $\nnu$ can be computed by solving 
\begin{equation}
\label{eq:ot}
    \min_{\tpplan\in \margset} \tpplan\!:\! \costmat 
    \qquad \mbox{where} \qquad
    \margset :=\{
    	\tpplan\in\R_{+}^{n_1\times n_2}
    	|
    	\tpplan\indicm_{n_2} = \mmu, 
    	\tpplan^\top\indicm_{n_1} = \nnu 
    	\}.
\end{equation}
A transport plan $\tpplan$ is a solution of~\eqref{eq:ot} if and 
only if there exist $\ff \in \R^{n_1}$ and $\gg \in \R^{n_2}$ such that 
\begin{equation}
\label{eq:opticond}
	\tpplan \in \margset, 
	\qquad 
	\ff \oplus \gg \le \costmat, 
	\qquad 
	(\costmat_{i,j} - \ff_i - \gg_j)\tpplan_{i,j} = 0 
	\quad
	\forall (i,j) \in \IntInterval{1}{n_1} \times \IntInterval{1}{n_2}.
\end{equation}
The optimality conditions~\eqref{eq:opticond} correspond to primal 
feasibility, dual feasibility and complementarity, in that order. The latter 
condition implies that the optimal transport plan is easy to determine 
given a fixed support for $\tpplan$ and dual variable values. In fact, if 
the correct support is known, a solution can be computed from the dual 
variables, and the network simplex method is partly based on this observation. 
Note that problem~\eqref{eq:ot} can be equivalently reformulated as
\begin{equation}
\label{eq:otpd}
    \min_{\tpplan\geq 0}\ 
    \max_{(\ff,\gg)\in\R^{n_1} \times \R^{n_2} }
    \tpplan\!:\! (\costmat-\ff\oplus\gg) + 
    \langle \ff,\mmu\rangle + \langle \gg,\nnu\rangle.
\end{equation}
Problem~\eqref{eq:ot} can be cast as a minimum cost flow 
problem~\cite{nf1993orlin} on a bipartite graph. Indeed, let 
$V=\IntInterval{1}{n_1}$ and $V'=\IntInterval{1}{n_2}$ be source nodes 
and target nodes representing the coordinates of $\mmu$ and $\nnu$, 
respectively, and let $\mathcal{E} = V\times V'$ be a set of edges so 
that $(V \cup V',E)$ is a bipartite graph. 
Problem \eqref{eq:ot} then amounts to sending mass from source nodes to 
target nodes along edges while respecting the marginal constraints and 
minimizing the transport cost. Thus, any transport plan $\tpplan$
corresponds to a subgraph of $(V \cup V', \mathcal{E})$ with edge $(i,j)$
included if and only if $\tpplan_{i,j}>0$. In particular, vertices 
of $\margset$ correspond to subgraphs with no cycles (i.e. trees or 
forests), which include at most $n_1+n_2-1$ 
edges~\cite[Thm. 8.1.2, Cor. 8.1.3]{brualdi2006combinatorial}. It follows that 
there exists a solution of problem~\eqref{eq:ot} with at most $n_1+n_2-1$
nonzero coefficients. 

\subsection{Unbalanced optimal transport}
\label{subsec:uot}

We now describe unbalanced optimal transport formulations built from the 
optimal transport problem~\eqref{eq:ot} by relaxing the marginal constraints. 
Let $\psi_1:\R^{n_1}\to\R$ and $\psi_2:\R^{n_2}\to\R$ be two separable, 
strictly convex, and continuously differentiable functions that possess minima.
The unbalanced optimal transport problem associated with~\eqref{eq:ot} and 
$(\psi_1,\psi_2)$ is
\begin{equation}
\label{eq:rot}
    \min_{(\mmu,\nnu) \in \R^{n_1+n_2}
    }
    \min_{\tpplan\in\margset}
    \En(\tpplan) :=
    \tpplan\!:\!\costmat 
    + \psi_1(\mmu)
    + \psi_2 (\nnu).
\end{equation}
Problem~\eqref{eq:rot} is an unbalanced version of~\eqref{eq:ot} in that we 
do not enforce that the measures have equal mass. By eliminating the measures 
$\mmu$ and $\nnu$, problem~\eqref{eq:rot} can be rewritten as
\begin{equation}
\label{eq:uot}
	\min_{\tpplan\geq 0}
    \tpplan\!:\!\costmat 
    + \psi_1(\tpplan\indicm_{n_2})
    + \psi_2 (\tpplan^\top\indicm_{n_1}),
\end{equation}
or, alternatively, as the primal-dual problem
\begin{equation}
\label{eq:rot-pd}
    \min_{\tpplan\geq 0}
    \max_{(\ff,\gg)\in\R^{n_1+n_2} }
    \tpplan\!:\! (\costmat-\ff\oplus \gg)
    - \psi_1^*(-\ff)
    - \psi_2^*(-\gg).
\end{equation} 
Note that the convex conjugates $\psi_1^*$ and $\psi_2^*$ are 
strictly convex and continuously differentiable. 
The optimality conditions for problem~\eqref{eq:uot} differ from those 
of~\eqref{eq:opticond} in the primal and dual feasibility conditions. 
The former is simply $\tpplan \ge 0$, while the latter become
\begin{equation}
\label{eq:dualfeasuot}
    \begin{cases}
        \tpplan\indicm_{n_2} = \nabla \psi_1^*(-\ff),\\
        \tpplan^\top\indicm_{n_1} = \nabla \psi_2^*(-\gg),
    \end{cases}
    \quad \Leftrightarrow \quad
    \begin{cases}
        \ff = -\nabla\psi_1(\tpplan\indicm_{n_2})\,,\\
        \gg = -\nabla\psi_2(\tpplan^\top\indicm_{n_1}).
    \end{cases}
\end{equation}
In addition, the complementarity condition is identical to that 
of~\eqref{eq:ot}, i.e.
\begin{equation}
\label{eq:compuot}
	\forall (i,j) \in \IntInterval{1}{n_1} \times \IntInterval{1}{n_2}, 
	\quad
	(\costmat_{i,j} -\ff_i -\gg_j) \tpplan_{i,j} = 0.
\end{equation}
Similarly to OT, we can compute a primal solution for the problem given 
dual variables and the correct support. Indeed, the marginals 
$\tpplan\indicm_{n_2}$ and $\tpplan^\top\indicm_{n_1}$ are obtained from 
the dual variables~\eqref{eq:dualfeasuot}, while complementarity ensures 
that the primal variable $\tpplan$ solves the optimal transport problem.
In the next section, we leverage this observation to adapt network simplex 
techniques to the unbalanced setting.

\section{Network simplex algorithms}
\label{sec:algo}

This section generalizes the network simplex framework for optimal transport 
to the unbalanced setting. We first review the key principles of the classical 
network simplex for optimal transport in Section~\ref{subsec:nsot}. We then 
adapt a key ingredient of network simplex (flow push) in 
Section~\ref{subsec:feasflowuot}. Our adapted network simplex is stated in 
Section~\ref{subsec:nsuot}.

\subsection{Network simplex for OT}
\label{subsec:nsot}

Network simplex methods~\cite{nf1993orlin} are variants of the simplex method 
for linear programming that exploit graph features of network problems such 
as~\eqref{eq:ot}. As in a standard simplex method, the goal consists in 
exploring the vertices of the feasible set to search for an exact solution. 

A primal network simplex method applied to~\eqref{eq:ot} begins with a vertex 
$\tpplan$ of $\margset$, typically computed through the so-called 
``North-West'' corner rule~\cite[Sec. 3.4.2]{GPeyre_MCuturi_2019}. Dual or 
complementary variables $(\ff,\gg)$ are then computed so that 
$(\tpplan,\ff,\gg)$ satisfies both primal feasibility and complementarity 
conditions in~\eqref{eq:opticond}. If dual feasibility is also satisfied, the 
triplet is an optimal solution. Otherwise, there must exist a \emph{pivot} 
edge $(i_0,j_0)\in\mathcal{E}$ such that 
$\ff_{i_0}+\gg_{j_0} > \costmat_{i_0,j_0}$. In that case, the edge $(i_0,j_0)$ 
is added to the subgraph associated with $\tpplan$. If this procedure does not 
create a cycle, i.e. if it links two disjoint trees with the support forest, 
then only dual variables require updating~\cite[p.412]{GPeyre_MCuturi_2019}. 
Otherwise, the cycle is eliminated by \emph{pushing a flow} along its edges. 
More precisely, let $\mathcal{C}:=\{(i_1=i_0,j_1),(i_2,j_1),(i_2,j_2),...,
(i_{\ell},j_\ell=j_0)\}$ denote the cycle of interest (recall that the vertices 
are identified using the bipartite structure). To push a flow along the cycle, 
we first compute $\theta_\mathcal{C} = 
\min_{(i_{k+1},j_k)\in\mathcal{C}} \tpplan_{i_{k+1},j_k}$, that represents how much 
flow one can push along $\mathcal{C}$ while maintaining primal feasibility 
($\tpplan \ge 0$). Given this value, we then update the transport plan values 
along the cycle as follows:
\begin{equation}
\label{eq:push-cycle}
    \tpplan_{i_0,j_0} = \theta_{\mathcal{C}} 
    \qquad \text{and} \qquad
    \forall k\in\IntInterval{1}{\ell-1}, 
    \quad
    \begin{cases}
        \tpplan_{i_k,j_k} 
        \gets \tpplan_{i_k,j_k} + \theta_\mathcal{C} \\
        \tpplan_{i_{k+1},j_k} 
        \gets \tpplan_{i_{k+1},j_k} - \theta_\mathcal{C}. 
    \end{cases}
\end{equation}
By construction, the coefficient  of
at least one (backward) edge $(i_{k+1},j_k)$ in the cycle is cancelled in 
the new $\tpplan$.
This edge (or the first one, in case there are several)
is deleted from the cycle, to produce a new tree.
This procedure 
produces a new vertex of $\margset$ while provably reducing the primal cost 
when $\theta_{\mathcal{C}}>0$. The case $\theta_{\mathcal{C}}=0$ is handled by the strategy of
removing the first edge corresponding to a zero coefficient of $\tpplan$, so 
as to guarantee that the method does not cycle and the overall process 
terminates in finite time~\cite[Sec. 3.5.3]{GPeyre_MCuturi_2019}. Depending 
on the pivot strategy, additional guarantees can be provided~\cite{nf1993orlin}, 
as we explain in Section~\ref{sec:complexity}. 

Efficient practical implementations of the network simplex method pay special 
attention to storage and data structures. In particular, the transport plan 
should be stored in sparse format along with the $n_1+n_2$ dual variables, and 
updates should only be performed on cycles and/or trees affected by a flow 
push~\cite{NBonneel_MvandePanne_SParis_WHeidrich_2011}. To the best of our 
knowledge, the fastest available implementation relies on the \texttt{LEMON} 
C++ library~\cite{lemon} and is adapted to OT in the \texttt{pot} Python library~\cite{pot}.

\subsection{Pushing flows in unbalanced optimal transport}
\label{subsec:feasflowuot}

We now generalize the various ingredients of the network simplex method to 
tackle the unbalanced transport problem~\eqref{eq:uot}.
A first obvious, yet important remark is that
an optimum for~\eqref{eq:uot} is also an optimal
transport plan for its own marginals $\tpplan\oone_{n_2}$
and $\tpplan^\top\oone_{n_1}$, so that the support of 
$\tpplan$ has the same tree/forest structure.
Therefore, as in the original 
network simplex method, we iterate over transport plans associated with trees 
or forests in the underlying bipartite graph. Updating those plans involves 
checking conditions~\eqref{eq:dualfeasuot}, where both the transport plan 
$\tpplan$ and the complementary variables $(\ff,\gg)$ appear, and thus the 
transport plan cannot be updated independently of the dual variables in the 
unbalanced case. The transport plan can, however, be updated recursively 
by leveraging the problem's graph structure, and the following definitions.

\begin{notation} 
For any tree subgraph $\tree$ of the bipartite graph $(V\cup V',E)$, 
we let $\roott[\tree]$ denote its root node (which
can of course be arbitrary), $\leftt[\tree] = 
\{i\in V|~i\in \tree\}$ denote its source nodes and 
$\rightt[\tree] := \{j\in V'|~j\in \tree\}$ denote its target nodes. The 
number of nodes of $\tree$ is $\# \tree =
\# \leftt[\tree] + \# \rightt[\tree]$. 
Given $v \in \tree$, $\children[v]$ denotes the set of children nodes for $v$ 
in $\tree$, while $\parent[v]$ denotes the parent node of 
$v \neq \roott[\tree]$. Finally, given two nodes $u$ and $v$ in $\tree$, 
we write $u \preceq v$ (resp. $u \succeq v$) if $u$ belongs to the 
ancestor nodes (resp. successor nodes) of $v$.
\end{notation}

Equipped with this notation, consider a feasible transport plan $\tpplan$ for 
problem~\eqref{eq:uot} associated with a tree $\tree$, as well as two nodes 
$u \in \tree$ and $v \in \children[u]$. We let $\tpplan_{u \to v}$ denote the 
flow between both nodes, which is either $\tpplan_{u,v}$ if 
$u \in \leftt[\tree]$ or $\tpplan_{v,u}$ if $u \in \rightt[\tree]$. Assuming 
that we have computed dual potentials $(\ff,\gg)$ on the tree, we then compute 
a flow, or a transport, $\tpplan$ satisfying~\eqref{eq:dualfeasuot} on all 
nodes of $\tree$ save for the root node as follows.

\begin{proposition}
    \label{prop:comp-flow}
    Given a tree $\tree$ on the bipartite graph $(V\cup V',\mathcal E)$ and dual 
    potentials $(\ff,\gg)$ on $\tree$, 
    the flow $\tpplan$ defined as:
    {\footnotesize
    \begin{equation}
        \label{eq:flow-from-potentials}
        \begin{multlined}
            \forall u\in\tree\backslash\roott[\tree],~
            \tpplan_{\parent[u]\to u}
            :=
            \begin{cases}
                \displaystyle 
                \sum_{j\in\rightt[\tree]:j\succeq u} (\nabla (\psi_2^*)(-\gg))_j - 
                \sum_{i\in\leftt[\tree]:i\succeq u} (\nabla (\psi_1^*)(-\ff))_i
                ~\text{if}~\parent[u]\in \leftt[\tree],\\
                \displaystyle 
                \sum_{i\in\leftt[\tree]:i\succeq u} (\nabla( \psi_1^*)(-\ff))_i -
                \sum_{j\in\rightt[\tree]:j\succeq u} (\nabla( \psi_2^*)(-\gg))_j
                ~\text{if}~\parent[u]\in \rightt[\tree].
            \end{cases}
        \end{multlined}
    \end{equation}}
    satisfies \eqref{eq:dualfeasuot} on all nodes $u\in\tree\backslash\roott[\tree]$.
\end{proposition}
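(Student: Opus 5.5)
Since $\tpplan$ is supported on the edges of $\tree$, we have to show two things for every non-root node $u$. If $u\in\leftt[\tree]$, the row sum $(\tpplan\indicm_{n_2})_u$ must equal $(\nabla\psi_1^*(-\ff))_u$. If $u\in\rightt[\tree]$, the column sum $(\tpplan^\top\indicm_{n_1})_u$ must equal $(\nabla\psi_2^*(-\gg))_u$. To simplify notation, write $a_i := (\nabla\psi_1^*(-\ff))_i$ and $b_j := (\nabla\psi_2^*(-\gg))_j$. For each non-root $v$, let $S(v)$ denote the signed subtree sum appearing in \eqref{eq:flow-from-potentials}:
\[
S(v) = \pm\Bigl(\sum_{i\in\leftt[\tree]:i\succeq v} a_i - \sum_{j\in\rightt[\tree]:j\succeq v} b_j\Bigr),
\]
with sign $+$ when $v$ is a source node and $-$ when $v$ is a target node. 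Because $\tree$ is bipartite, $\parent[v]\in\rightt[\tree]$ exactly when $v\in\leftt[\tree]$, so this sign convention matches the two cases of \eqref{eq:flow-from-potentials}. Thus $\tpplan_{\parent[v]\to v}=S(v)$, where the subtree sums run over descendants of $v$ including $v$ itself.

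The core of the argument is a recursive decomposition of the subtree sums. The subtree rooted at $u$ is the disjoint union of $\{u\}$ and the subtrees rooted at each $w\in\children[u]$. Take $u\in\leftt[\tree]$ non-root, so its children are target nodes. Then
\[
\sum_{i\succeq u} a_i - \sum_{j\succeq u} b_j \;=\; a_u + \sum_{w\in\children[u]}\Bigl(\sum_{i\succeq w} a_i - \sum_{j\succeq w} b_j\Bigr) \;=\; a_u - \sum_{w\in\children[u]} \tpplan_{u\to w}.
\]
The second equality holds because each child $w$ is a target node, so $\tpplan_{u\to w}=\sum_{j\succeq w}b_j-\sum_{i\succeq w}a_i$.

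Now count the edges of $\tree$ at $u$: they are the edge to $\parent[u]$ and the edges to its children. The row sum at $u$ is therefore $\tpplan_{\parent[u]\to u}+\sum_{w\in\children[u]}\tpplan_{u\to w}$. By the identity above, this equals $a_u$, as required. The case $u\in\rightt[\tree]$ is symmetric, with the roles of $a$ and $b$ exchanged, and gives column sum $b_u$. The root is excluded because it has no parent edge, so this telescoping does not apply there.

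The main obstacle is bookkeeping rather than substance. Two points need care. First, the orientation convention $\tpplan_{u\to v}$ must correctly translate into entries $\tpplan_{i,j}$, so that each incident edge is counted exactly once in the row or column sum. Second, the statement only uses the edges of $\tree$, with $\tpplan$ set to zero elsewhere. Note that the proposition makes no claim about $\tpplan\ge0$; nonnegativity is not required here, only the marginal identities. I would finish by writing out the leaf case separately, where $\children[u]=\emptyset$ and $\tpplan_{\parent[u]\to u}$ reduces to $a_u$ or $b_u$, as a sanity check of the signs.
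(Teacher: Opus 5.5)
Your proof is correct and uses the same identity as the paper: balance at a non-root $u\in\leftt[\tree]$ is equivalent to $\tpplan_{\parent[u]\to u} = (\nabla\psi_1^*(-\ff))_u - \sum_{w\in\children[u]}\tpplan_{u\to w}$, and symmetrically for target nodes. This is the paper's recursion \eqref{eq:rec-flow}, with $\sigma(u)$ read as the sum over child edges, and \eqref{eq:flow-from-potentials} is that recursion unrolled from the leaves.

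The only difference is direction. The paper starts from a flow satisfying \eqref{eq:dualfeasuot} and unrolls the recursion to reach the formula. You start from the formula and check the balance via the subtree decomposition, which is exactly the implication the statement asserts.
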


\begin{proof}
	Let $\tpplan$ be a flow satisfying~\eqref{eq:dualfeasuot} given $(\ff,\gg)$. 
	At any node $u \in \tree\backslash\roott[\tree]$, the following recursive 
	relation holds:
	\begin{equation}
    \label{eq:rec-flow}
		\tpplan_{\parent[u]\to u}
		=
		\begin{cases}
    	\displaystyle
    	\nabla \psi_1^*(-\ff)_u - \sigma(u)
    	~\text{if}~u\in \leftt[\tree],\\
    	\displaystyle
    	\nabla \psi_2^*(-\gg)_u - \sigma(u)
    	~\text{if}~u\in \rightt[\tree].
		\end{cases}
		\qquad 
		\mbox{where}
		\qquad
		\sigma(u) := \sum_{\parent[v]\succeq u}\tpplan_{\parent[v]\to v}.
	\end{equation}
	Applying the formula recursively gives the desired result.
\end{proof}

Algorithm~\ref{alg:compute-flow} implements the flow calculation 
described by Proposition~\ref{prop:comp-flow} based on the 
recursion~\eqref{eq:rec-flow}. This algorithm takes at most 
$\mathcal{O}(\#\tree)$ iterations to compute the 
flow~\eqref{eq:flow-from-potentials}. 
Applying Algorithm~\ref{alg:compute-flow} alone is not sufficient for our 
purpose, as the output of this method is not guaranteed to be primal feasible 
(i.e. have non-negative coefficients) for problem~\eqref{eq:uot}. In the next 
section, we show how to correct this by updating the dual potentials, giving 
rise to a full network simplex method.

\begin{algorithm}[ht]
\SetAlgoLined
\KwIn{Tree $\tree$, dual potentials $(\ff,\gg)$ 
satisfying \eqref{eq:compuot} on $\tree$, initial flow $\tpplan$}
{\DontPrintSemicolon
\SetKwFunction{FMain}{FlowComputationNode}
  \SetKwProg{Fn}{Function}{:}{}
  \Fn{\FMain{$v$}}{
        $s\gets 0$\;
        \For{$u\in \children[v]$}{
                $s\gets s+\FMain{u}$\;
            }
        \uIf{$v\neq \roott[\tree]$}{
        $\tpplan_{\parent[v]\to v} \gets \psi_2^*(-\gg)_v-s$ if $v\in \leftt[\tree]$ else $\psi_1^*(-\ff)_v-s$\;
        \Return $\tpplan_{\parent[v]\to v}$
        }

  }}
   \FMain{$\roott[\tree]$}\;
\KwOut{Flow $\tpplan$ satisfying \eqref{eq:dualfeasuot} associated to 
$(\ff,\gg)$ on $\tree\backslash\roott[\tree]$}
\caption{Flow computation on a tree}
\label{alg:compute-flow}
\end{algorithm}

\subsection{Network simplex for UOT}
\label{subsec:nsuot}

Suppose that we have computed a transport plan $\tpplan \ge 0$ and dual 
potentials $(\ff,\gg)$ satisfying~\eqref{eq:dualfeasuot} and \eqref{eq:compuot}. Either the triplet 
$(\tpplan,\ff,\gg)$ is a primal-dual solution of~\eqref{eq:rot-pd} or there 
exists an edge $(i_0,j_0)$ where dual feasibility is violated, i.e.
\begin{equation}
\label{eq:violatingedge}
     \ff_{i_0} + \gg_{j_0} 
    - \costmat_{i_0,j_0} =:\Delta > 0.
\end{equation}
We first explain how to update $\ff$ and $\gg$ when $i_0$ and $j_0$ belong 
to two distinct trees denoted by $\tree^-$ and $\tree^+$, respectively. 
Proposition~\ref{prop:comp-flow} provides a formula for adding the edge 
$(i_0,j_0)$ to the forest of two trees and updating the flow accordingly, 
but it does not guarantee that the resulting flow is nonnegative. Shifting 
the dual variables by a proper constant amount ensures that 
Proposition~\ref{prop:comp-flow} yields a nonnegative flow, and thus a 
primal-dual triplet that is optimal over the tree 
$\tree^+\cup (i_0,j_0) \cup \tree^-$.

We now describe how to compute a proper shift for each tree. Suppose that 
we replace the dual variables $(\ff,\gg)$ by 
\begin{equation}
\label{eq:shifteddual}
	(\widetilde \ff, \widetilde \gg) 
	:=
	\left\{
	\begin{array}{ll}
		(\ff-\gamma_-,\gg+\gamma_-) &\mbox{on $\tree^-$} \\
		(\ff+\gamma_+,\gg-\gamma_+) &\mbox{on $\tree^+$}.
	\end{array}
	\right.
\end{equation}
Since the two trees are disjoint, the pair 
$(\widetilde \ff,\widetilde \gg)$ still satisfies~\eqref{eq:compuot}. 
However, applying Proposition~\ref{prop:comp-flow} to update $\tpplan$ based 
on $(\widetilde \ff,\widetilde \gg)$ rather than $(\ff,\gg)$ yields a 
different transport plan, which we denote by $\widetilde\tpplan$ in the rest 
of this section. By construction, the triplet 
$(\widetilde\tpplan, \widetilde\ff,\widetilde\gg)$ satisfies both 
complementarity~\eqref{eq:compuot} on $\tree^+\cup\tree^-$ and first-order 
conditions \eqref{eq:dualfeasuot} on  
$\tree^+\cup(i_0,j_0)\cup\tree^-$ regardless of the value of 
$\delta=\widetilde\tpplan_{i_0,j_0}-\tpplan_{i_0,j_0}$. Our goal is thus 
to select $\delta$ so that both complementarity on $(i_0,j_0)$ and 
nonnegativity of $\widetilde \tpplan$ hold.
  
Applying Proposition~\ref{prop:comp-flow} on both $\tree^{-}$ and 
$\tree^{+}$ yields the following relationships between $\gamma_{-}$, 
$\gamma_+$ and $\delta$:
\begin{equation}
\label{eq:gamma-dela-general}
	\begin{cases}
	\displaystyle
		\tpplan_{i_0,j_0} + \delta = 
		\sum_{i\in \leftt[\tree^-]} (\psi_{1,i}^*)'(-\ff_i+\gamma_-)
		- \sum_{j\in \rightt[\tree^-]} (\psi_{2,j}^*)'(-\gg_j-\gamma_-),\\
	\displaystyle
		\tpplan_{i_0,j_0} + \delta = 
		\sum_{j\in \rightt[\tree^+]} (\psi_{2,j}^*)'(-\gg_j+\gamma_+)
		-  \sum_{i\in \leftt[\tree^+]} (\psi_{1,i}^*)'(-\ff_i-\gamma_+),
	\end{cases}
\end{equation}
where $\psi_1(\mathbf{\mu})=\sum_{i=1}^{n_1}\psi_{1,i}(\mu_i)$ (resp. 
$\psi_{2,j}(\mathbf{\nu})=\sum_{j=1}^{n_2} \psi_{2,j}(\nu_j)$) by 
separability. 
Thus, for any fixed $\delta \in (0,\Delta]$, where $\Delta$ is defined 
in~\eqref{eq:violatingedge}, the values $\gamma_+$ and $\gamma_-$ 
satisfying~\eqref{eq:gamma-dela-general} are uniquely defined by 
strict convexity of $\psi_{1,i}$ and $\psi_{2,j}$. It follows that 
complementarity on $(i_0,j_0)$ is guaranteed by setting 
$\delta \le \delta_{\max}$, where $\dmax$ solves the equation
\begin{equation}
\label{eq:delta-max}
	\gamma_+(\dmax) + \gamma_{-}(\dmax) = \Delta.
\end{equation}

If the transport plan updated using $\delta_{\max}$ is not 
nonnegative, we seek the first arcs on both trees on which 
the flow vanishes, and the flow value on those arcs. Using 
again Proposition~\ref{prop:comp-flow}, we obtain the values 
{\small
\begin{equation}
\label{eq:delta-eff-tm}
	\delta_\mathrm{eff}^-:=
	\min \left\{\delta\geq 0\bigg|
	\exists \parent[u]\in \leftt[\tree^-],~
	\sum_{\rightt[\tree^-]\ni j\succeq u} (\psi_{2,j}^*)'(-\gg_j-\gamma_-(\delta)) 
	=  
	\sum_{\leftt[\tree^-]\ni i\succeq u} (\psi_{1,i}^*)'(-\ff_i+\gamma_-(\delta)) 
	\right\}
\end{equation}
}%
for the flow on $\tree_-$ and    
{\small
\begin{equation}
\label{eq:delta-eff-tp}
	\delta_\mathrm{eff}^+ :=
	\min \left\{\delta\geq 0\bigg|
	\exists \parent[u]\in \rightt[\tree^+],
	\sum_{\rightt[\tree^+]\ni i\succeq u} (\psi_{1,i}^*)'(-\ff_i-\gamma_+(\delta)) 
	=
	\sum_{\rightt[\tree^+]\ni j\succeq u} (\psi_{2,j}^*)'(-\gg_j+\gamma_+(\delta)) 
	\right\}
\end{equation}
}%
for that on $\tree_+$. We let $(u_{\mathrm{eff}}^\pm, v_{\mathrm{eff}}^\pm)$ 
denote the associated arcs (taking the first in lexicographic ordering to avoid 
cycling). Negative transport plan values only occur when 
$\min\{\delta_{\mathrm{eff}}^+,\delta_{\mathrm{eff}}^-\} < \delta_{\max}$, in 
which case one must remove a vanishing edge for $\tree^+$ or $\tree^-$, as 
well as (possibly) parts of the subtree that become isolated, hence not 
relevant for the update. We then re-consider the flow push 
procedure over the updated trees. 
Algorithm~\ref{alg:optimizing-union} summarizes the procedure for disjoint 
trees. As shown below, this process eventually terminates because the number 
of edges decreases at every iteration.

\begin{algorithm}[ht]
\SetAlgoLined
\KwIn{Triplet $(\tpplan,\ff,\gg)$, edge $(i_0,j_0)$ and associated 
trees $\tree^\pm$ so that $\tpplan_{ij} >0$, \eqref{eq:dualfeasuot} and 
\eqref{eq:compuot} hold on $\tree^\pm$.
}
$k\gets 0$\;
$(\tpplan^{(0)},\ff^{(0)},\gg^{(0)})\gets (\tpplan,\ff,\gg)$\;
\While{$\Delta^{(k)}:=\ff_{i_0}^{(k)}+\gg_{j_0}^{(k)}-\costmat_{i_0,j_0}>0$}{
    Compute $\dmax$, $\delta_\mathrm{eff}^\pm$ and 
    $(u_\mathrm{eff}^\pm,v_\mathrm{eff}^\pm)$ using \eqref{eq:delta-max}, 
    \eqref{eq:delta-eff-tm} and \eqref{eq:delta-eff-tp}\;
    $\delta \gets \min(\dmax,\delta_\mathrm{eff}^+,\delta_\mathrm{eff}^-)$\;
    $(\ff^{(k+1)},\gg^{(k+1)})\gets (\ff^{(k)}\pm\gamma_\pm(\delta),\gg^{(k)}\mp \gamma_\pm(\delta))$ on $\tree^\pm$\;
    \black{Compute $\tpplan^{(k+1)}$ with \eqref{eq:rec-flow} with $(\ff^{(k+1)},\gg^{(k+1)})$\;}
    $\tpplan^{(k+1)}_{i_0,j_0}\gets \tpplan^{(k)}_{i_0,j_0}+\delta$\;
    \If{$\delta<\delta_{\mathrm{max}}$}{
        \uIf{$\delta_\mathrm{eff}^+<\delta_\mathrm{eff}^-$}{
        Remove the edge $(u_\mathrm{eff}^+,v_\mathrm{eff}^+)$ 
        and update $\tree^+$ 
        accordingly\;
        }
        \Else{Remove the edge $(u_\mathrm{eff}^-,v_\mathrm{eff}^-)$ 
        and update $
        \tree^-$ accordingly\;
        }
    }
    $k\gets k+1$\;
}
\KwOut{Optimal variables $(\tpplan^{(k)},\ff^{(k)},\gg^{(k)})$}
\caption{UOT on a union of two trees}
\label{alg:optimizing-union}
\end{algorithm}

\begin{proposition}
\label{prop:nbopsuniontrees}
    Algorithm \ref{alg:optimizing-union} converges to an optimal solution 
    $(\tpplan,\ff,\gg)$ in $\mathcal{O}(\#\tree^- + \#\tree^+)$
    iterations.
\end{proposition}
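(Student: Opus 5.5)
The argument has two parts: a counting argument for termination, and a verification that the final triplet satisfies the optimality conditions on the union tree. In each pass of the while loop, $\delta=\min(\dmax,\delta^+_{\mathrm{eff}},\delta^-_{\mathrm{eff}})$, so exactly one of two things happens.

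\emph{Case $\delta=\dmax$.} By the definition~\eqref{eq:delta-max}, after the shift~\eqref{eq:shifteddual} we have
\[
\ff^{(k+1)}_{i_0}+\gg^{(k+1)}_{j_0}-\costmat_{i_0,j_0}=\Delta^{(k)}-\gamma_+(\dmax)-\gamma_-(\dmax)=0 .
\]
Hence $\Delta^{(k+1)}=0$ and the loop stops after this iteration.

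\emph{Case $\delta<\dmax$.} Then $\delta=\delta^\pm_{\mathrm{eff}}$, and one edge of $\tree^+$ or $\tree^-$ is deleted. The detached subtree, which no longer contains $i_0$ or $j_0$, is discarded from the current trees. So $\#\tree^-+\#\tree^+$ decreases by at least one.

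Since the trees always keep at least the nodes $i_0$ and $j_0$, the second case can occur at most $\#\tree^-+\#\tree^+-2$ times. This gives the $\mathcal{O}(\#\tree^-+\#\tree^+)$ iteration bound.

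For this counting to be valid, each iteration must be well defined. First, the maps $\delta\mapsto\gamma_\pm(\delta)$ in~\eqref{eq:gamma-dela-general} must be well defined and increasing. The right-hand sides are sums of derivatives of the strictly convex $\psi^*$'s, with the sign pattern making them strictly monotone in $\gamma_\pm$, so the implicit function is monotone. Second, $\dmax$ must exist. This follows from continuity and monotonicity of $\gamma_++\gamma_-$, which equals $0$ at $\delta=0$, provided this sum can reach $\Delta^{(k)}$. Third, $\delta>0$ in the non-degenerate case, using that the input flows are positive on the tree edges.

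To prove optimality at termination, I check the conditions on the final tree $\tree^-\cup(i_0,j_0)\cup\tree^+$:
\begin{itemize}
\item Condition~\eqref{eq:dualfeasuot} holds at every node by Proposition~\ref{prop:comp-flow}, applied with the root chosen at $i_0$ or $j_0$ and the edge flow $\tpplan_{i_0,j_0}+\delta$. The equations~\eqref{eq:gamma-dela-general} are exactly the balance conditions at the two roots.
\item Complementarity~\eqref{eq:compuot} holds on the tree edges because the shifts preserve $\ff_i+\gg_j$ within each tree. It holds on $(i_0,j_0)$ because $\Delta=0$ there.
\item Nonnegativity holds because $\delta\le\delta^\pm_{\mathrm{eff}}$, where $\delta^\pm_{\mathrm{eff}}$ is the first value at which some tree flow vanishes.
\end{itemize}
Optimality is meant here for the problem restricted to this support, i.e. over the current union; global pivoting is handled by the outer algorithm.

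The main obstacle is a subtle point in the termination count: after an edge is removed, the discarded subtree's marginals are frozen. It has to be checked that the remaining trees still satisfy the input hypotheses of the loop, namely~\eqref{eq:dualfeasuot} at all nodes except the root and strictly positive flows. I would verify this by noting that the removed edge carries zero flow. Removing it therefore does not change the balance at the cut node, so Proposition~\ref{prop:comp-flow} still applies to the pruned tree.

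Degenerate ties, where several flows vanish at once or $\delta^\pm_{\mathrm{eff}}=0$, are handled by the lexicographic rule. Each such step still removes an edge, so the bound is unaffected.
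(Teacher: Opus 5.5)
Your termination count is correct and matches the paper's. Either $\delta=\dmax$, which forces $\Delta^{(k+1)}=0$, or a vanishing arc is cut and the node count drops, so there are at most $\#\tree^-+\#\tree^+-1$ passes. The gap is in the optimality conclusion. You restrict ``optimal'' to the final pruned tree and never check dual feasibility on the arcs deleted during the loop. Those arcs belong to the subgraph $\tree^-\cup(i_0,j_0)\cup\tree^+$ on which Algorithm~\ref{alg:optimizing-union} is called. The detached subtrees also still carry positive flow in the output, so they are part of the support. Since the final tree is selected by the algorithm itself, ``optimal on the final support'' says very little. In particular, it does not rule out that the output violates $\ff_u+\gg_v\le\costmat_{u,v}$ on an arc $(u,v)$ that was just removed. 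Establishing exactly this is the main content of the paper's proof.

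The missing idea is a sign argument. For $\delta\ge0$ one has $\gamma_\pm(\delta)\ge0$, so on $\tree^-$ source marginals increase and target marginals decrease. The only flows that can vanish are therefore on arcs $u\to v$ with $u=\parent[v]\in\leftt[\tree^-]$; symmetrically, on $\tree^+$ only arcs whose parent lies in $\rightt[\tree^+]$ can vanish. When $(u,v)$ is cut from $\tree^-$, complementarity gives $\ff_u+\gg_v=\costmat_{u,v}$. From then on $v$ is in the detached part, so $\gg_v$ is frozen. Meanwhile $\ff_u$ either keeps being shifted by $-\gamma_-\le0$ or is frozen once $u$ is itself detached. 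Hence $\ff_u+\gg_v\le\costmat_{u,v}$ for the rest of the run, and the same holds on $\tree^+$ with the roles of $\ff$ and $\gg$ exchanged. Combine this with your observation that the cut arc carries zero flow. That observation gives~\eqref{eq:dualfeasuot} at the new root of each detached subtree, whose internal arcs keep complementarity and nonnegative flow. Together these show that the output is optimal over the whole input subgraph $\tree^-\cup(i_0,j_0)\cup\tree^+$, which is the statement the paper proves. Your proviso on the existence of $\dmax$ is harmless: since each $\psi^*$ is strictly convex and differentiable on all of $\R$, each $\gamma_\pm$ is an increasing bijection from some $[0,\bar\delta_\pm)$ onto $[0,\infty)$, so $\gamma_++\gamma_-$ reaches every $\Delta^{(k)}>0$.
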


\begin{proof}
	Suppose that we reach the $k$th iteration of 
	Algorithm~\ref{alg:optimizing-union}. Two cases must be considered.
	Suppose first that $\delta = \dmax \le \min\{\delta_\mathrm{eff}^+, 
	\delta_\mathrm{eff}^-\} $. In that case, there is no edge removal, 
	and the update formulas guarantee $\Delta^{(k+1)}=0$, so that the 
	method stops. Moreover, since the updates preserve first-order 
	and complementarity conditions, the triplet 
	$(\tpplan^{(k+1)},\ff^{(k+1)},\gg^{(k+1)})$ must be optimal on 
	$\tree^+\cup (i_0,j_0)\cup\tree^-$, hence the result holds.
	
	Suppose now that $\delta= \min\{\delta_\mathrm{eff}^+, 
	\delta_\mathrm{eff}^-\} < \dmax$. Then this step will result in 
	removing one edge from $\tree^+$ in case
	$\delta= \delta_\mathrm{eff}^+$ and/or one
	edge from $\tree^-$ if
	$\delta= \delta_\mathrm{eff}^-$.
	We obtain a new triplet $(\tpplan^{(k+1)},\ff^{(k+1)},\gg^{(k+1)})$ 
	with 
	$\Delta^{(k+1)} < \Delta^{(k)}$. Let $(u,v)$ denote an edge 
	removed during the iteration. If this edge is removed 
	from $\tree^-$, then $u \in \leftt[\tree^-]$ as shown in 
	Section~\ref{subsec:nsuot} while $v$ is removed from
	 $\rightt[\tree^-]$. Moreover, $\ff_u^{(k+1)}+\gg_v^{(k+1)}=\costmat_{u,v}$ 
	since complementarity conditions are preserved, while any subsequent 
	iteration will only reduce $\ff_u$ while keeping $\gg_v$ at the same 
	value. Thus, dual feasibility is guaranteed on $(u,v)$ for all 
	subsequent iterations, while
	one easily checks that conditions~\eqref{eq:dualfeasuot} are satisfied in the old part of $\tree^-$, attached
	to $v$, which has been pruned after the update and
	will not be touched again in the remaining steps. A similar reasoning applies when $(u,v)$ is 
	an edge removed from $\tree^+$.

	Overall, for every iteration of Algorithm~\ref{alg:optimizing-union}, 
	either the method terminates or it removes one edge in the 
	worst case (possibly more edges and vertices) while preserving 
	dual feasibility. Since the number of edges that can be removed 
	is at most $(\# \tree^- + \# \tree^+)$, it follows that the 
	method cannot perform more than $(\# \tree^- + \# \tree^+)$ 
	iterations. Moreover, if the maximum of iterations is reached, 
	then the last remaining edge must be $(i_0,j_0)$. The updating 
	formula then guarantees that we fall into the first case above, 
	and optimality follows.

\end{proof}

\begin{remark}
In the rest of the paper, we will assume that each of the quantities 
$\delta_\mathrm{eff}^\pm$ can be computed in $\mathcal{O}(\# \tree^\pm)$ 
iterations and $\delta_\mathrm{max}$ in $\mathcal{O}(1)$ iterations. While 
this assumption may appear restrictive, it holds for the special cases that 
we describe in Section~\ref{sec:particular-cases} below (UOT and SUOT with 
$L^2$ and $\KL$ relaxation). In general, one must resort to numerical solvers 
to compute these values, which we illustrate for $L^4$ relaxation functions 
in our numerical experiments.
\end{remark}

We can now describe our full adapted network simplex framework for unbalanced 
optimal transport, given in Algorithm~\ref{alg:adapted-ns}. At every 
iteration, provided the current triplet is not optimal, the method finds a 
pivot edge $(i_0,j_0)$ and calls Algorithm~\ref{alg:optimizing-union} to produce a new 
primal-dual iterate. Note that when $i_0$ and $j_0$ belong to the same tree, 
one must first proceed as in the standard network simplex: adding $(i_0,j_0)$ creates a cycle in the
tree along which one pushes flow according 
to~\eqref{eq:push-cycle}. Then, one other edge is cancelled. This creates a new tree $\tree$, and
one defines $\tree^-,\tree^+$ as the two connected
components of $\tree\setminus \{(i_0,j_0)\}$
respectively containing $i_0$ and $j_0$, which
are set as roots of the corresponding subtree.
We run them Algorithm~\ref{alg:optimizing-union}, the only difference
being that the initial flow $\tpplan_{i_0,j_0}$ is
not zero, yet this assumption was not needed in the
definition of the algorithm.

\begin{algorithm}[ht]
\SetAlgoLined
\KwIn{Triplet $(\tpplan,\ff,\gg)$ with $\tpplan\geq 0$ and 
$(\ff,\gg)$ complementary potentials}
\While{$(\tpplan,\ff,\gg)$ not optimal for~\eqref{eq:rot-pd}}{
	Select a pivot edge $(i_0,j_0)$ satisfying~\eqref{eq:violatingedge}\;
	\If{$i_0$ and $j_0$ belong to the same tree}{
		Add the edge $(i_0,j_0)$ to the tree, apply the update~\eqref{eq:push-cycle}, 
		and remove one edge from the original tree.
		Define $\tree^\pm$ as the connected components
		of the new tree minus $(i_0,j_0)$\;
	}
    Update $(\tpplan,\ff,\gg)$ by calling Algorithm~\ref{alg:optimizing-union} 
    on $\tree^- \cup (i_0,j_0)\cup \tree^+$, where $i_0$
    is set as the root of $\tree^-$ and 
    $j_0$ as the root of $\tree^+$.\;
}
\KwOut{$(\tpplan,\ff,\gg)$ optimal for~\eqref{eq:rot-pd}}
\caption{Adapted network simplex for UOT}
\label{alg:adapted-ns}
\end{algorithm}

Every iteration of Algorithm~\ref{alg:adapted-ns} produces a triplet that 
is optimal on the current forest corresponding to the support of $\tpplan$. 
Since there exists a finite number of forests and the primal cost decreases 
at every iteration, the method must converge to a solution in a finite number 
of iterations. 
Note that Algorithm~\ref{alg:adapted-ns} relies on a pivot rule to select an 
edge violating dual-feasibility, \emph{i.e.} satisfying \eqref{eq:violatingedge}, which will be discussed in the next 
sections. Beforehand, we explain how to compute our quantities of interest 
in closed form for special UOT instances.

\section{Special cases of unbalanced formulations}
\label{sec:particular-cases}

In this section, we describe several instances of the unbalanced optimal 
transport problem and explain how they allow for direct calculation of the 
formulas used in Algorithm~\ref{alg:adapted-ns}. We study formulations based 
on $L^2$ and $\KL$ relaxation functions, then describe semi-unbalanced 
optimal transport problems as a limiting case of unbalanced optimal transport.

\subsection{UOT with quadratic relaxation}
\label{subsec:l2uot}

Arguably the most classical relaxation functions in optimal transport are 
weighted quadratic (or $L^2$) relaxation functions. Given two 
discrete distributions $\aalpha^{\oone}$, $\aalpha^{\ttwo}$ and two positive 
weight vectors $\ww^{\oone}$, $\ww^{\ttwo}$, the relaxation functions are given 
by 
\begin{equation}
\label{eq:psiL2}
    \forall k \in \IntInterval{1}{2},\ \forall \bm{x} \in\R^{n_k}, 
    \quad 
    \psi_k(\bm{x}) = 
    \frac{1}{2\tau^k} 
    \normad{\sqrt{\ww^{k}} 
    \boldsymbol{\odot} \left(\bm{x}-\aalpha^{k}\right)}_2^2,
\end{equation}
where $\tau^{k}>0$ are two positive values. The corresponding instance of 
problem~\eqref{eq:rot}, termed UOT-$L^2$, is
\begin{equation}
\label{eq:rot-l2}
	\min_{(\mmu,\nnu) \in \R^{n_1+n_2}}\ 
    \min_{\tpplan\in\margset}
    \tpplan\!:\!\costmat 
    + \frac{1}{2\tau^{1}}
    \normad{\sqrt{\ww^{\oone}} \boldsymbol{\odot}
    \left(\mmu-\aalpha^{\oone}\right)}_2^2
    + \frac{1}{2\tau^{2}}
    \normad{\sqrt{\ww^{\ttwo}} \boldsymbol{\odot}
    \left(\nnu-\aalpha^{\ttwo}\right)}_2^2.
\end{equation}
Note that the measures $\aalpha^1$ and $\aalpha^2$ are not required to have 
the same mass, and thus the formulation~\eqref{eq:rot-l2} applies to pairs of 
measures between which no optimal transport can be constructed. 
Problem~\eqref{eq:rot-l2} is equivalent to a LASSO 
problem~\cite{chapel2021unbalanced}, which can be tackled efficiently using 
dedicated solvers such as \celer~\cite{massias2018celer}.

Algorithm~\ref{alg:adapted-ns} applied to problem~\eqref{eq:rot-l2} can be 
initialized with 
\begin{equation*}
\label{eq:init:l2uot}
	\tpplan = \bm{0}_{n_1\times n_2},
	\quad
	\ff = (\tau^1)^{-1}\ww^{\oone} \boldsymbol{\odot}\aalpha^{\oone},
	\quad
    \gg = (\tau^2)^{-1}\ww^{\ttwo}\boldsymbol{\odot}\aalpha^{\ttwo}.
\end{equation*} 
Indeed, those variables satisfy $\tpplan \ge \bm{0}$, 
\eqref{eq:dualfeasuot} and~\eqref{eq:compuot}. In addition, since the 
functions~\eqref{eq:psiL2} are strictly convex and separable with
$(\psi_{1,i}^*)'(y) = \aalpha_i^{\oone} + \frac{\tau^1}{w_i^{1}}y$ 
and $(\psi_{2,j}^*)'(z) = \aalpha_j^{\ttwo} + \frac{\tau^2}{w_j^{2}}z$, 
%
the general relationship~\eqref{eq:gamma-dela-general} yields 
closed-form expressions for $\gamma_\pm(\delta)$, $\dmax$ and 
$\delta_{\mathrm{eff}}^{\pm}$:
\begin{equation}
\label{eq:qoi-uot-l2}
	\begin{cases}
		\displaystyle
		\gamma_\pm(\delta) = 
		\frac{\delta }{\lambda(\roott[\tree^\pm])},\\
		\displaystyle
		 \dmax =
    	\Delta\left(\frac{1}{\lambda(\roott[\tree^+])}
    	+\frac{1}{\lambda(\roott[\tree^-])}\right),\\
    	\displaystyle
    	\delta_\mathrm{eff}^+ =
        \lambda(\roott[\tree^+])
        \min_{\parent[u]\in\rightt[\tree^+]}
        \frac{\tpplan_{\parent[u]\to u}}{\lambda(u)},\\
		\displaystyle
		\delta_\mathrm{eff}^- =
		\lambda(\roott[\tree^-])
		\min_{\parent[u]\in\leftt[\tree^-]}
		\frac{\tpplan_{\parent[u]\to u}}{\lambda(u)}.
	\end{cases}
\end{equation}
where:
    \begin{equation}\label{eq:deflambda}
        \lambda(u):= \tau^{1}\sum_{\leftt[\tree]\ni i\succeq u}
            \frac{1}{\ww^{\oone}_i}
            +
            \tau^{2}\sum_{\rightt[\tree]\ni j\succeq u} 
            \frac{1}{\ww^{\ttwo}_j} 
    \end{equation}
\subsection{UOT with Kullback-Leibler relaxation}
\label{subsec:kluot}

Another popular choice of relaxation, which we refer to as the 
Kullback-Leibler relaxation, relies on the Bregman divergence associated with 
the entropy function:
\begin{equation*}
\label{eq:kl}
    \forall (\bm{x},\bm{y})\in(\R^n_+)^2,~\KL(\bm{x},\bm{y}):=
    \sum_{i=1}^n x_i \log\left(\frac{x_i}{y_i}\right) - x_i + y_i,
\end{equation*}
considering that $0 \log(0)=0$ (and the value is
infinite if $y_i=0$ and $x_i>0$ for some $i$). Given two measures $\aalpha^{\oone}$ and 
$\aalpha^{\ttwo}$ as well as two positive parameters $\tau_1,\tau_2$, we set 
\begin{equation}
    \label{eq:def-kl-psi}
    \forall k\in\IntInterval{1}{2},~\forall \bm{x}\in\R^n,~\psi_k(\bm{x}) =
    \frac{1}{\tau^k}\KL(\bm{x},\aalpha^{\bm k})
\end{equation}
and we obtain the following instance of problem~\eqref{eq:rot}, termed 
UOT-$\KL$ in the rest of the paper:
\begin{equation}
  \label{eq:rot-kl}
    \min_{(\mmu,\nnu) \in \R^{n_1+n_2}}
    \min_{\tpplan\in\margset}
    \tpplan\!:\!\costmat 
    + \frac{1}{\tau^{1}}\KL(\mmu,\aalpha^{\oone})
    + \frac{1}{\tau^{2}}\KL(\nnu,\aalpha^{\ttwo}).
\end{equation}
Here again, we do not enforce that $\aalpha^{\oone}$ and $\aalpha^{\ttwo}$ 
have identical mass. The formulation~\eqref{eq:rot-kl} has been employed 
for multi-label classification~\cite{frogner2015learning}. 

Explicit formulas for Algorithm~\ref{alg:adapted-ns} can be obtained when 
$\tau^1=\tau^2=\tau>0$. Since 
\begin{equation*}
    \forall k \in \IntInterval{1}{2},~
    \forall i\in\IntInterval{1}{n_k},
    ~\forall \bm{y} \in\R^{n_k},~(\psi_{k,i}^*)'(\bm y) = 
    \aalpha_i^{\bm k}\exp(\tau \bm y_i),
\end{equation*}
a valid initialization of the algorithm is given by
\begin{equation*}
\label{eq:init:kluot}
	\begin{cases}
		\tpplan = \diag{\aalpha^{\oone}\exp(-\tau \ff)},\\
		\ff= \left(\tau^{-1}\log\left(
		\aalpha^{\oone}\boldsymbol{\oslash}\aalpha^{\ttwo}\right)
		+\diag{\costmat}\right)/2,\\
		\gg = \diag{\costmat} - \ff.
	\end{cases}
\end{equation*}
The equations \eqref{eq:gamma-dela-general} reduce to
\begin{equation*}
    \begin{cases}
    \displaystyle
        \tpplan_{i_0,j_0} + \delta = 
        \sum_{i\in \leftt[\tree^-]} \aalpha_i^{\bm 1}\exp(\tau(-\ff_i+\gamma_-))
        - \sum_{j\in \rightt[\tree^-]} \aalpha_j^{\bm 2}\exp(\tau(-\gg_j-\gamma_-)),\\
         \displaystyle
        \tpplan_{i_0,j_0} + \delta = 
        \sum_{j\in \rightt[\tree^+]} \aalpha_j^{\bm 2}\exp(\tau(-\gg_j+\gamma_+))
        -  \sum_{i\in \leftt[\tree^+]} \aalpha_i^{\bm 1}\exp(\tau(-\ff_i-\gamma_+)),
    \end{cases}
\end{equation*}
which, combined with \eqref{eq:dualfeasuot}, can be simplified into:
\begin{equation}
\label{eq:gamma-pm-kl-impl}
	\delta = 2\sinh(\tau\gamma_\pm) \sigma_\pm
	+(\exp(\tau\gamma_\pm)-1) \tpplan_{i_0,j_0}.
\end{equation}
where $\sigma_\pm:= \sigma(\roott[\tree^\pm])$. The following result shows that 
$\gamma_{\pm}(\delta)$ can be computed in closed form.

\begin{proposition}
\label{prop:gamma-delta-full-kl}
	Consider problem~\eqref{eq:uot} where the relaxation functions are given 
	by~\eqref{eq:def-kl-psi}, for $\tau^1=\tau^2=:\tau$. Then, the various quantities computed at each 
	iteration of Algorithm~\ref{alg:optimizing-union} are explicitly defined as 
	follows:
    \begin{subequations}
    	\begin{align}
    		\gamma_\pm(\delta) 
    		&= 
    		\frac{1}{\tau}\ln\left(
        	\frac{
        	(\delta+\tpplan_{i_0,j_0}) + \sqrt{(\delta+\tpplan_{i_0,j_0})^2 
			+ 4\sigma_\pm(\sigma_\pm+\tpplan_{i_0,j_0})}
			}{2(\sigma_\pm+\tpplan_{i_0,j_0})}
        	\right) 
        	\label{eq:gammapm-fullkl} \\
        	\dmax 
        	&= 
        	\frac{
        	e^{2\tau\Delta}\widetilde \sigma_+ \widetilde \sigma_- 
        	- \sigma_+\sigma_-
	        }{
    	    \sqrt{e^{\tau\Delta}\sigma_+\sigma_-+e^{2\tau\Delta}(
    	    \sigma_+\widetilde \sigma_+ +\sigma_-\widetilde \sigma_-)
        	+e^{3\tau\Delta}\widetilde \sigma_+ \widetilde \sigma_-}} 
        	- \tpplan_{i_0,j_0}
        	\label{eq:deltamax-fullkl} \\
        	\delta_{\mathrm{eff}}^+ 
        	&=
        	-  \tpplan_{i_0,j_0}+ 
            \min_{\parent[u]\in\rightt[\tree^+]}
            (\sigma_+ + \tpplan_{i_0,j_0}) U_{\parent[u]\to u} 
            - \frac{\sigma_+}{U_{\parent[u]\to u}} 
            \label{eq:deltaeffp-fullkl}\\
            \delta_{\mathrm{eff}}^- 
            &=
            -\tpplan_{i_0,j_0}+ 
            \min_{\parent[u]\in\leftt[\tree^-]}
            (\sigma_- + \tpplan_{i_0,j_0}) U_{\parent[u]\to u} 
            - \frac{\sigma_-}{U_{\parent[u]\to u}},
        	\label{eq:deltaeffm-fullkl} 
        \end{align}
    \end{subequations}
    where $\widetilde \sigma_\pm := \sigma_\pm + \tpplan_{i_0,j_0}$ 
    and $U_{\parent[u]\to u} := 
    \sqrt{1+\frac{\tpplan_{\parent[u]\to u}}{\sigma(u)}}$.
\end{proposition}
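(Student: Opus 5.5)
Everything follows from the implicit relation \eqref{eq:gamma-pm-kl-impl}, $\delta = 2\sinh(\tau\gamma_\pm)\sigma_\pm + (e^{\tau\gamma_\pm}-1)\tpplan_{i_0,j_0}$, which I take as given from the preceding reduction. I would handle each of the four quantities with the substitution $x_\pm := e^{\tau\gamma_\pm}>0$, which turns this relation into a quadratic in $x_\pm$.

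\textbf{Step 1: $\gamma_\pm(\delta)$.} Multiplying \eqref{eq:gamma-pm-kl-impl} by $x_\pm$ gives
\[
(\sigma_\pm+\tpplan_{i_0,j_0})\,x_\pm^2-(\delta+\tpplan_{i_0,j_0})\,x_\pm-\sigma_\pm=0 .
\]
Since $\sigma_\pm\ge 0$ and $\widetilde\sigma_\pm>0$, the constant term is nonpositive. Hence the quadratic has exactly one positive root, namely the one with the $+$ sign in front of the square root. Taking $\frac1\tau\ln$ of that root gives \eqref{eq:gammapm-fullkl}. Strict convexity already guarantees uniqueness, which is consistent with this.

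\textbf{Step 2: $\dmax$.} Condition \eqref{eq:delta-max} reads $x_+x_-=e^{\tau\Delta}$. Set $D:=\delta+\tpplan_{i_0,j_0}$. Each $x_\pm$ satisfies $\widetilde\sigma_\pm x_\pm - \sigma_\pm/x_\pm = D$, so
\[
\widetilde\sigma_+x_+-\frac{\sigma_+}{x_+}=\widetilde\sigma_-x_- -\frac{\sigma_-}{x_-}=D .
\]
Substituting $x_-=e^{\tau\Delta}/x_+$ into the second equality gives a quadratic in $x_+^2$. From it I would express $D$ by eliminating $x_+$. A cleaner route is to write $D^2+4\sigma_\pm\widetilde\sigma_\pm=(\widetilde\sigma_\pm x_\pm+\sigma_\pm/x_\pm)^2$. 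Combining the two tree equations then yields
\[
D=\frac{e^{2\tau\Delta}\widetilde\sigma_+\widetilde\sigma_--\sigma_+\sigma_-}{\sqrt{\cdots}} ,
\]
with the radicand stated in \eqref{eq:deltamax-fullkl}; subtracting $\tpplan_{i_0,j_0}$ then gives $\dmax$. This algebraic elimination is the main obstacle. My plan is to solve for $x_+^2$ explicitly, namely
\[
x_+^2=\frac{e^{\tau\Delta}\left(\sigma_+e^{\tau\Delta}... \right)}{\cdots},
\]
more precisely $x_+^2(\widetilde\sigma_+ + \sigma_- )\,\text{-type}$ terms. I would then plug this into $D=\widetilde\sigma_+x_+-\sigma_+/x_+$ and rationalize to reach the symmetric form.

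\textbf{Step 3: $\delta_{\mathrm{eff}}^\pm$.} The edge $\parent[u]\to u$ vanishes exactly when the subtree sums in \eqref{eq:delta-eff-tp} (resp. \eqref{eq:delta-eff-tm}) coincide. In the KL case each sum scales by $x_\pm$ or $1/x_\pm$. Write $A$ for the subtree sum that is multiplied by $x_\pm$ and $B$ for the sum that is divided by it, both taken at the current potentials. Via \eqref{eq:rec-flow} these satisfy $A-B=\tpplan_{\parent[u]\to u}$ and $B=\sigma(u)$, up to orientation, which I would check against the recursion. The flow vanishes when $Ax^{-1}=Bx$, that is, when
\[
x^2=1+\frac{\tpplan_{\parent[u]\to u}}{\sigma(u)}=U_{\parent[u]\to u}^2 .
\]
Substituting $x_\pm=U_{\parent[u]\to u}$ into the quadratic relation $D=\widetilde\sigma_\pm x_\pm-\sigma_\pm/x_\pm$ gives the candidate value of $\delta$ for that edge. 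The right-hand side is increasing in $x$, so the first edge to vanish is the one minimizing this value, which yields \eqref{eq:deltaeffp-fullkl} and \eqref{eq:deltaeffm-fullkl}. The direction of the scaling, $x$ versus $1/x$, depends on whether $\parent[u]$ lies in $\leftt$ or $\rightt$; verifying this sign bookkeeping is the only delicate point here.
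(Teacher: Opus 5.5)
Your Steps 1 and 3 follow the paper's argument. In Step 1 the substitution $x_\pm=e^{\tau\gamma_\pm}$ turns \eqref{eq:gamma-pm-kl-impl} into a quadratic with a unique positive root. In Step 3 you find the value $x=U_{\parent[u]\to u}$ at which an edge flow vanishes and convert it back to $\delta$. Your direct substitution into $\widetilde\sigma_\pm x-\sigma_\pm/x$, together with the monotonicity remark that justifies taking the minimum, is slightly cleaner than the paper's inversion of \eqref{eq:deltaeffm-fullkl-cond}.

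The gap is Step 2: you never derive \eqref{eq:deltamax-fullkl}. ``Combining the two tree equations then yields'' only restates the target, and the fragments you write are wrong. Substituting $x_-=e^{\tau\Delta}/x_+$ does not give a quadratic in $x_+^2$; it gives a \emph{linear} equation in $x_+^2$, which is exactly why the elimination is short. Its coefficients are also not $\widetilde\sigma_++\sigma_-$ or $\sigma_+e^{\tau\Delta}$. Set $E:=e^{\tau\Delta}$ and $D:=\dmax+\tpplan_{i_0,j_0}$, and multiply $\widetilde\sigma_+x_+-\sigma_+/x_+=\widetilde\sigma_-E/x_+-\sigma_-x_+/E$ by $Ex_+$. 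This gives
\[
x_+^2=\frac{E(\widetilde\sigma_-E+\sigma_+)}{\widetilde\sigma_+E+\sigma_-},
\qquad
D=\frac{\widetilde\sigma_+x_+^2-\sigma_+}{x_+}
=\frac{E^2\widetilde\sigma_+\widetilde\sigma_- -\sigma_+\sigma_-}{\sqrt{E(\widetilde\sigma_-E+\sigma_+)(\widetilde\sigma_+E+\sigma_-)}} .
\]
Expanding the radicand gives $E\sigma_+\sigma_-+E^2(\sigma_+\widetilde\sigma_++\sigma_-\widetilde\sigma_-)+E^3\widetilde\sigma_+\widetilde\sigma_-$, which is \eqref{eq:deltamax-fullkl}. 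This is the same computation as the paper's: its $X+\sqrt{X^2+S_\pm}$ equals $2\widetilde\sigma_\pm x_\pm$ and its $K$ equals $4\widetilde\sigma_+\widetilde\sigma_-E$, so your route just drops the $S_\pm,K$ bookkeeping. As submitted, however, your proposal does not establish the $\dmax$ formula.

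Step 3 also has its labels reversed. If $A$ is the sum multiplied by $x$ and $B$ the one divided by $x$, the vanishing condition is $Ax=B/x$. With $A-B=\tpplan_{\parent[u]\to u}$ and $B=\sigma(u)$ this gives $x^2<1$, which is impossible since $\gamma_\pm\ge 0$.

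The correct bookkeeping is as follows. For $\parent[u]\in\leftt[\tree^-]$, $u$ is a target node. Over the subtree $T_u$ rooted at $u$, the target marginals sum to $\sigma(u)+\tpplan_{\parent[u]\to u}$ and are divided by $x=e^{\tau\gamma_-}$. The source marginals sum to $\sigma(u)$ and are multiplied by $x$. In $\tree^+$ the node types are exchanged, but again the sum of $u$'s type (the larger one) is divided by $x=e^{\tau\gamma_+}$.

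These identities hold because every edge of $T_u$ has exactly one endpoint of each type inside $T_u$, while the edge $\parent[u]\to u$ has only its $u$-end there. So $\sigma(u)$ must be read as the total flow on all edges of $T_u$, i.e. the literal $\sum_{\parent[v]\succeq u}$, not just the flow from $u$ to its children.

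The same bookkeeping shows that on these edges the new flow $(\sigma(u)+\tpplan_{\parent[u]\to u})/x-\sigma(u)x$ decreases in $x$, whereas on the remaining edges it increases. This is what justifies restricting the minima to $\parent[u]\in\rightt[\tree^+]$ and $\parent[u]\in\leftt[\tree^-]$.
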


\begin{proof}
	We begin by proving~\eqref{eq:gammapm-fullkl}. Using $z_\pm:=\exp(\tau\gamma_\pm)$, 
	the relation \eqref{eq:gamma-pm-kl-impl} can be rewritten as
    \begin{equation*}
        \delta = (\sigma_\pm+\tpplan_{i_0,j_0})z_\pm-\frac{\sigma_\pm}{z_\pm}-\tpplan_{i_0,j_0}.
    \end{equation*}
    Multiplying by $z$ and rearranging the terms gives the quadratic 
    polynomial:
    \begin{equation}
    \label{eq:gammapm-fullkl:auxquad}
        (\sigma_\pm+\tpplan_{i_0,j_0})z_\pm^2 
        - (\delta+\tpplan_{i_0,j_0})z_\pm 
        - \sigma_\pm 
        = 0.
    \end{equation}
    Since $z_\pm\geq  0$, we can compute $z_{\pm}$ by considering the positive 
    root of~\eqref{eq:gammapm-fullkl:auxquad}, which in turns yields the 
    formulas~\eqref{eq:gammapm-fullkl}.
    
    We now turn to proving~\eqref{eq:deltamax-fullkl}. For convenience, define
    $S_\pm = 4\sigma_\pm\widetilde \sigma_\pm$, 
    $K := 4\widetilde \sigma_+\widetilde \sigma_- \exp(\tau \Delta)$, and
    $X :=\dmax+\tpplan_{i_0,j_0}\geq0 $. 
    With these notations, \eqref{eq:delta-max} is equivalent to
    \begin{equation}
    \label{eq:deltamax-fullkl-sqrteq}
        (X+\sqrt{S_++X^2})(X+\sqrt{S_-+X^2}) = K, 
    \end{equation}
	The solution of~\eqref{eq:deltamax-fullkl-sqrteq} is
    \begin{equation*}
        X = \frac{
        K^2-S_+S_-
        }{2\sqrt{K(K^2+K(S_++S_-)+S_+S_-)}}.
    \end{equation*}
    Replacing $K,S_+,S_-$ by their expressions, we have
    \begin{align*}
        K^2-S_+S_- 
        &= 16\widetilde \sigma_+\widetilde \sigma_-
        \left(e^{2\tau\Delta}\widetilde \sigma_+\widetilde \sigma_- 
        - \sigma_+\sigma_-\right),\\
        K(K^2+K(S_++S_-)+S_+S_-) 
        &= 64e^{\tau\Delta}\widetilde{\sigma}_+^2\widetilde{\sigma}_-^2
        \left(1+e^{\tau\Delta}(\sigma_+\widetilde{\sigma}_+
        +\sigma_-\widetilde{\sigma}_-)
        +e^{2\tau\Delta}\widetilde{\sigma}_+\widetilde{\sigma}_-\right),
    \end{align*}
   	hence~\eqref{eq:deltamax-fullkl} holds.
    
    Finally, we show~\eqref{eq:deltaeffp-fullkl} 
    and~\eqref{eq:deltaeffm-fullkl}, by focusing 
    on~\eqref{eq:deltaeffm-fullkl} since both results follow from the same 
    argument. Consider $u$ such that $\parent[u]\in\leftt[\tree^-]$.
    When adding the constant $\gamma_-$, the flow $\tpplan_{\parent[u]\to u}$ 
    vanishes if (see \eqref{eq:delta-eff-tm}):
	\begin{equation*}
		\sum_{j\in\rightt[\tree]:j\succeq u} 
		\aalpha_j^{2}\exp(\tau(-\gg_j-\gamma_-))
		-\sum_{i\in\leftt[\tree]:i\succeq u}
		\aalpha_i^{1}\exp(\tau(-\ff_i+\gamma_-)) 
		= 0.
	\end{equation*}
	Meanwhile, first-order conditions reduce to
	\begin{equation*}
		2\sinh(-\tau\gamma_-)\sigma(u) 
		+ \exp(-\tau\gamma_-)\tpplan_{\parent[u]\to u} 
		= 0
   \end{equation*}
   from which we obtain 
   $\gamma_- = \ln\left(U_{\parent[u]\to u}\right)/\tau$.
   Since the first order conditions must hold, it follows that
   \begin{equation}
   \label{eq:deltaeffm-fullkl-cond}
       U_{\parent[u]\to u} = 
       \frac{
        (\delta+\tpplan_{i_0,j_0}) + \sqrt{(\delta+\tpplan_{i_0,j_0})^2 
        + 4\sigma_\pm(\sigma_\pm+\tpplan_{i_0,j_0})}
        }{2(\sigma_\pm+\tpplan_{i_0,j_0})},
   \end{equation}
   and one can check that~\eqref{eq:deltaeffm-fullkl} 
   solves~\eqref{eq:deltaeffm-fullkl-cond}.
\end{proof}

\begin{remark}
    \label{rem:upd-flow-l2-kl}
    Updates from flow pushing can also be written explicitly under the 
    assumptions of Proposition~\ref{prop:gamma-delta-full-kl}.
    Let $(\ff,\gg,\tpplan)$ satisfy~\eqref{eq:dualfeasuot} on 
    $\tree\backslash\roott[\tree]$ and suppose that we want to compute 
    the flow $ \tpplan^\gamma$ satisfying \eqref{eq:dualfeasuot}
	from $( \ff^\gamma,  \gg^\gamma) := (\ff+\gamma,\gg-\gamma)$
    given some constant $\gamma$. 
    In the $L^2$ case \eqref{eq:rot-l2}, the flow $\tpplan$ is given by:
    {\begin{equation*}
    \label{eq:flux-update-L2}
    	\forall u\in\tree\backslash\roott[\tree],\quad
    	\tpplan_{\parent[u]\to u}^\gamma =
    	\begin{cases}
    	\displaystyle
     	\tpplan_{\parent[u]\to u} 
      	+ \gamma\lambda(u)~\text{if}~\parent[u]\in \leftt[\tree],\\
		\displaystyle
		\tpplan_{\parent[u]\to u} 
		- \gamma \lambda(u)~\text{if}~\parent[u]\in \rightt[\tree].
        \end{cases} 
    \end{equation*}}
    where $\lambda$ is defined in~\eqref{eq:deflambda}.
    and the $\KL$ case \eqref{eq:rot-kl} is given by:
    {\begin{equation*}
    \label{eq:flux-update-kl}
		\forall u\in\tree\backslash\roott[\tree],\quad
		\tpplan_{\parent[u]\to u}^\gamma =
		\begin{cases}
        	\displaystyle
        	\left(e^{\tau^{2}\gamma}-e^{-\tau^{1}\gamma}\right)\sigma(u)
        	+e^{\tau^{2}\gamma}\tpplan_{\parent[u]\to u}
        	~\text{if}~\parent[u]\in \leftt[\tree],\\
        	\displaystyle
        	\left(e^{-\tau^{1}\gamma}-e^{\tau^{2}\gamma}\right)\sigma(u)
        	+e^{-\tau^{1}\gamma}\tpplan_{\parent[u]\to u}
        	~\text{if}~\parent[u]\in \rightt[\tree].
    	\end{cases}
	\end{equation*}}
\end{remark}

\begin{remark}
    \label{rem:norm-kl}
    In  \eqref{eq:rot-kl}, we do not constrain $\mmu,\nnu$ to be probability distributions so that $\tpplan$ is not constrained to be in the simplex. 
    However, a 
    minimizer with this additional
    constraint is easily found from a
    solution of \eqref{eq:rot-kl},  by simple renormalization.
     For $(\tpplan^*,\ff^*,\gg^*)$ a solution of \eqref{eq:rot-kl} and $\eta>0$, let us define the normalized transport plan $\tpplan' := \tpplan^*/\eta$. We can write $\tpplan' := \tpplan^*\exp(-\mu \indicm_{n_1}\indicm_{n_2}^\top)$ with $\mu :=\log(\eta)$. Since $(\tpplan^*,\ff^*,\gg^*)$ satisfies first order conditions, we have:
     \begin{equation}
     \begin{cases}
         \tpplan'\indic = \aalpha^{\oone}\exp(-\tau^1\ff')
         \\
         \tpplan'\indic = \aalpha^{\ttwo}\exp(-\tau^2\gg')
     \end{cases}
     \end{equation}
     where $(\ff',\gg') := (\ff^*+(\mu/\tau^1)\indicm_{n_1},\gg^*+(\mu/\tau^2)\indicm_{n_2})$. By dual-feasibility and complementarity of $(\tpplan^*,\ff^*,\gg^*)$, we have $\ff'\oplus\gg'\leq \costmat+\bm{\zeta} $ 
     and $(\costmat+\bm{\zeta}-\ff'\oplus\gg')\odot \tpplan' = \mathbf{0}_{n_1\times n_2}$ 
     where $\bm{\zeta}:=\mu(1/\tau^1+1/\tau^2)\indicm_{n_1}\indicm_{n_2}^\top$. 
     Hence, choosing $\eta=\lVert\tpplan^*\rVert_1$, $(\tpplan',\ff',\gg')$ is a solution of:
     \begin{equation}
    \min_{(\mmu,\nnu) \in \R^{n_1+n_2}
    }
    \min_{
    \substack{\tpplan\in\margset\\ \lVert\tpplan\rVert_1 = 1}}
    \tpplan\!:\!\costmat 
    + \frac{1}{\tau^1}\KL(\mmu,\aalpha^{\oone})
    + \frac{1}{\tau^2}\KL(\nnu,\aalpha^{\ttwo})
    \end{equation}
and $\mu(1/\tau^1+1/\tau^2)$ is then the Lagrange multiplier for 
$\lVert\tpplan\rVert_1 = 1$. 
\cite[Lem. 1]{le2021robust} shows a similar result
for $\tau^1=\tau^2$, our primal-dual formulation allows us
to derive a simpler proof.
\end{remark}

\subsection{Semi-unbalanced optimal transport}
\label{subsec:suot-particular-cases}

Semi-unbalanced optimal transport (SUOT) consists in relaxing only one of the 
marginal constraints, and can be understood as the proximal operator of a 
transport distance~\cite{LionsMercier}. In this paper, we are interested in 
the semi-unbalanced version of~\eqref{eq:rot-l2} and~\eqref{eq:rot-kl}. These 
formulations are obtained by considering $\tau^1 \rightarrow 0$ in the 
definition of $\psi_1$, so that one marginal becomes fixed.

\paragraph{SUOT-$L^2$}
Consider first the semi-unbalanced counterpart of~\eqref{eq:rot-l2} where we 
enforce the constraint $\tpplan\indic = \aalpha^{\oone}$ (i.e. $\tau^1=0$). 
The resulting problem has the form
\begin{equation}
\label{eq:srot-l2}
	\min_{\nnu \in \R^n}
	\min_{\tpplan\in\mathbf{U}(\aalpha^{\oone},\nnu)} \tpplan\!:\! \costmat
	+ \frac{1}{2\tau^2}
	\normad{\sqrt{\ww^{\ttwo}}\odot(\nnu-\aalpha^{\bm 2})}_2^2.
\end{equation}
Note that if $\|\aalpha^{\oone}\|_1 = 1$, then the solution 
of~\eqref{eq:srot-l2} is a proper transport plan, and one recovers the optimal 
transport formulation by setting $\tau^2=0$ if $\|\aalpha^{\ttwo}\|_1 = 1$. Interestingly, the solution 
of~\eqref{eq:srot-l2} can be viewed as a proximal operator for the optimal 
transport distance, which can then be employed in proximal splitting 
algorithms such as FISTA~\cite{beck2009fast,MBlondel_VSeguy_ARolet_2018}. 
We will illustrate this further in Section~\ref{sec:num-exp}.

A proper initialization for Algorithm~\ref{alg:adapted-ns} applied 
to~\eqref{eq:srot-l2} is given by
\begin{equation*}
	\begin{cases}
		\gg =\tau_2^{-1}\ww^{\ttwo}
		\boldsymbol{\odot}(\aalpha^{\ttwo}-\aalpha^{\oone}),\\
		\ff = \diag{\costmat}-\gg,\\
        \tpplan = \diag{\aalpha^{\oone}}.
	\end{cases}
\end{equation*}  
while the quantities of interest for Algorithm~\ref{alg:optimizing-union} can 
be computed by setting $\tau^1=0$ in formulas~(\ref{eq:qoi-uot-l2}-\ref{eq:deflambda}).

\paragraph{SUOT-$\KL$} 
One can also set $\tau^1=0$ in~\eqref{eq:rot-kl}, in which case the 
first-order conditions~\eqref{eq:dualfeasuot} reduce to 
\begin{equation*}
    \begin{cases}
        \tpplan\indic=\aalpha^{\oone},\\
        \tpplan^\top\indic = \aalpha^{\ttwo}\exp(-\tau^2 \gg),
    \end{cases}
\end{equation*}
so that a valid initialization is
\begin{equation*}
    \begin{cases}
        \gg = -\frac{1}{\tau^2}\log(\aalpha^{\oone}
        \boldsymbol{\oslash}\aalpha^{\ttwo}),\\
        \ff = \diag{\costmat}-\gg,\\
        \tpplan = \diag{\aalpha^{\oone}}.
    \end{cases}
\end{equation*}
The results of Proposition~\ref{prop:gamma-delta-full-kl} can be 
adapted to this case, and yield the following formulas
\begin{equation*}
    \label{eq:gamma-pm-suot-kl}
    \begin{cases}
    \displaystyle
        \gamma_-(\delta) 
        = 
        - \frac{1}{\tau^2} \ln\left(1- \frac{\delta}{\sigma_-}\right),\\
    \displaystyle
        \gamma_+(\delta) 
        = 
        \frac{1}{\tau^2} \ln\left(1+\frac{\delta}{\sigma_++\tpplan_{i_0,j_0}}\right),\\
    \displaystyle
    	\dmax = 
    	\frac{1-\exp(-\tau^2\Delta)}{
    	\frac{1}{\sigma_-}+\frac{\exp(-\tau^2\Delta)}{(\sigma_++\tpplan_{i_0,j_0})}
    	 },\\
    \displaystyle
        \delta_\mathrm{eff}^+ =
        (\sigma_++\tpplan_{i_0,j_0})
        \min_{\parent[u]\in\rightt[\tree^+]}
        \frac{\tpplan_{\parent[u]\to u}}{
        \sigma(u)},\\
    \displaystyle
        \delta_\mathrm{eff}^- =
        \sigma_-
        \min_{\parent[u]\in\leftt[\tree^-]}
        \frac{\tpplan_{\parent[u]\to u}}{
        \sigma(u)}.
    \end{cases}
\end{equation*}
In Section~\ref{sec:num-exp}, we will illustrate a possible application of 
SUOT with $\KL$ relaxation to Wasserstein gradient flows.

\section{Complexity analysis}
\label{sec:complexity}

In this section, we show that Algorithm~\ref{alg:adapted-ns} can be endowed 
with a complexity analysis for certain relaxation functions, akin to standard 
network simplex techniques. The original network simplex for minimum cost 
flow described in Section~\ref{subsec:nsot} performs at most 
$(n_1+n_2)n_1n_2\lVert \costmat\rVert_\infty$ pivots and thus runs in at most 
$\mathcal{O}((n_1+n_2)n_1^2n_2^2\lVert \costmat\rVert_\infty)$ 
{\color{black}time~\cite{ahuja1988network} for any pivot rule}. With the classical Dantzig pivot rule 
(see Section~\ref{subsec:numpivots}), the method requires at most
{\color{black}
$\mathcal{O}((n_1+n_2)\log((n_1+n_2)\lVert\mu\rVert_1\lVert\costmat\rVert_\infty)$
pivots~\cite[Th. 5.3]{ahuja1988network}) and thus performs at most
$\mathcal{O}((n_1+n_2)n_1n_2\log((n_1+n_2)\lVert\mu\rVert_1\lVert\costmat\rVert_\infty)$ 
operations. This follows from the adaptation of the proof of \cite[Th. 5.3]{ahuja1988network}) to OT context (combining Proposition \ref{prop:delta-precision} and \cite[Lem.1.1]{ahuja1988network}).}

More advanced versions of the algorithm, that rely on cost scaling 
techniques~\cite{orlin1997polynomial,tarjan1997dynamic}, enjoy an improved 
{\color{black}time} complexity in 
\[
	\mathcal{O}\left((n_1+n_2)n_1n_2\log((n_1+n_2)\lVert 
	\costmat\rVert_\infty)\right).
\] 
For an exhaustive list of complexity bounds in the context of minimum cost 
flows, we refer to the survey of Kov\'{a}cs~\cite[Table 1]{PKovacs_2015}.

In the rest of this section, we first derive a complexity bound assuming 
strongly convex relaxations, then obtain results dedicated to the 
(S)UOT-$L^2$ case without relying on strong convexity.

\subsection{Strongly convex case}
\label{subsec:strcvx}

Let $\tpplan^*$ be a solution of~\eqref{eq:uot} and let 
$\En^* := \En(\tpplan^*)$ (recall that $\En$ is the energy, or objective 
function, of~\eqref{eq:uot}). Given $\varepsilon \ge 0$, a 
\emph{$\varepsilon$-solution} of~\eqref{eq:uot} is a transport plan 
$\tpplan \ge 0$ such that $\En(\tpplan)-\En^*\leq \varepsilon$. Given 
$\Delta>0$, dual variables $(\ff, \gg)$ are \emph{$\Delta$-dual-feasible} if 
$\min \costmat - \ff\oplus\gg \geq -\Delta$. As shown by the result below, 
approximately dual feasible variables guarantee approximate primal solution.

\begin{proposition}
    \label{prop:delta-precision}
    Suppose that $(\tpplan^\Delta,\ff^\Delta,\gg^\Delta)$ is a triplet that 
    satisfies $\tpplan^\Delta\geq 0$, \eqref{eq:compuot} and 
    \eqref{eq:dualfeasuot} and $\Delta$-dual-feasibility. 
    Then, the 
    transport plan $\tpplan^\Delta$ is a 
    $\Delta \lVert \tpplan^* \rVert_1$-solution.
\end{proposition}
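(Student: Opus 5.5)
Let $\tpplan := \tpplan^\Delta$, $\ff := \ff^\Delta$, $\gg := \gg^\Delta$. I will bound $\En(\tpplan) - \En(\tpplan^*)$ directly using convexity of $\psi_1,\psi_2$ together with the first-order relations \eqref{eq:dualfeasuot}, which give $\nabla\psi_1(\tpplan\indicm_{n_2}) = -\ff$ and $\nabla\psi_2(\tpplan^\top\indicm_{n_1}) = -\gg$. The mechanism is a weak duality argument in which the only defect comes from the $\Delta$-violation of dual feasibility.

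\textbf{Step 1: convexity of the relaxation terms.} By convexity of $\psi_1$ and $\psi_2$,
\[
\psi_1(\tpplan^*\indicm_{n_2}) \ge \psi_1(\tpplan\indicm_{n_2}) - \langle \ff, \tpplan^*\indicm_{n_2} - \tpplan\indicm_{n_2}\rangle,
\]
and similarly
\[
\psi_2(\tpplan^{*\top}\indicm_{n_1}) \ge \psi_2(\tpplan^\top\indicm_{n_1}) - \langle \gg, \tpplan^{*\top}\indicm_{n_1} - \tpplan^\top\indicm_{n_1}\rangle.
\]

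\textbf{Step 2: rewrite the linear terms.} Using $\langle \ff, \mathbf{Q}\indicm_{n_2}\rangle + \langle \gg, \mathbf{Q}^\top\indicm_{n_1}\rangle = \mathbf{Q}\!:\!(\ff\oplus\gg)$ for any matrix $\mathbf{Q}$, adding the cost terms yields
\[
\En(\tpplan^*) \ge \En(\tpplan) + (\tpplan^* - \tpplan)\!:\!(\costmat - \ff\oplus\gg).
\]

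\textbf{Step 3: use complementarity.} By \eqref{eq:compuot}, $\tpplan\!:\!(\costmat - \ff\oplus\gg) = 0$, so
\[
\En(\tpplan) - \En^* \le -\tpplan^*\!:\!(\costmat - \ff\oplus\gg).
\]

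\textbf{Step 4: use approximate dual feasibility.} By $\Delta$-dual-feasibility, every entry of $\costmat - \ff\oplus\gg$ is at least $-\Delta$. Since $\tpplan^* \ge 0$, this gives $-\tpplan^*\!:\!(\costmat - \ff\oplus\gg) \le \Delta \sum_{i,j}\tpplan^*_{i,j} = \Delta\lVert\tpplan^*\rVert_1$, which is the claim.

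The only delicate point is differentiability. The convexity inequality needs the gradients at the marginals of $\tpplan$ to exist and to equal $-\ff$ and $-\gg$. Under the standing assumption that $\psi_1,\psi_2$ are continuously differentiable, this is immediate from \eqref{eq:dualfeasuot}. In the $\KL$ case, where the domain is restricted, I would use a subgradient version of the inequality instead; the same chain of inequalities applies. The argument does not use strong convexity, so the obstacle is only this bookkeeping.
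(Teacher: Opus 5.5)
Your proof is correct, but it reaches the bound by a somewhat different route from the paper's. The paper never writes a convexity inequality. It observes that $(\tpplan^\Delta,\ff^\Delta,\gg^\Delta)$ satisfies the exact optimality conditions of a perturbed UOT problem with cost $\costmat+\Delta\,\imath_{\tpplan^\Delta=0}$. The $\Delta$-violation is absorbed into the cost on the zero set of $\tpplan^\Delta$, so complementarity is untouched and dual feasibility becomes exact. The paper then compares the perturbed objective at $\tpplan^\Delta$ and at $\tpplan^*$, using $\tpplan^\Delta\!:\!\imath_{\tpplan^\Delta=0}=0$.

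You instead apply the gradient inequality for $\psi_1,\psi_2$ at the marginals of $\tpplan^\Delta$ directly, which is a weak-duality computation in the original problem. Your Steps 1--3 are exactly what justifies the paper's unproved claim that the triplet is optimal for the perturbed problem, namely sufficiency of first-order conditions for a convex problem. So your version is more self-contained. The paper's version is shorter and makes explicit that $\tpplan^\Delta$ exactly solves a nearby problem.

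Your intermediate bound $-\tpplan^*\!:\!(\costmat-\ff\oplus\gg)$ is never larger than the paper's $\Delta\,\tpplan^*\!:\!\imath_{\tpplan^\Delta=0}$. This is because complementarity forces the entries of $\costmat-\ff\oplus\gg$ to vanish on the support of $\tpplan^\Delta$. Neither argument uses strong convexity.

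Your subgradient caveat for $\KL$ is not needed. There, \eqref{eq:dualfeasuot} forces the marginals of $\tpplan^\Delta$ to equal $\aalpha^{\oone}\odot\exp(-\tau^1\ff)$ and $\aalpha^{\ttwo}\odot\exp(-\tau^2\gg)$. These lie in the interior of the domain when the reference measures are positive, so the ordinary gradient inequality already applies.
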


\begin{proof}
	By definition of the variables $(\tpplan^\Delta,\ff^\Delta,\gg^\Delta)$, 
	they are optimal for a UOT problem where the cost matrix in~\eqref{eq:uot} 
	is replaced by $\costmat+\Delta\,\imath_{\mathbf{P}^\Delta = 0}$, where 
	$\imath_{\mathbf{P}^\Delta = 0} \in \R^{n_1 \times n_2}$ is given by
	$(\imath_{\mathbf{P}^\Delta = 0})_{i,j} = 1$ if $\mathbf{P}_{i,j}^\Delta=0$ 
	and $0$ otherwise. In particular, optimality of $\mathbf{P}^\Delta$ on the 
	modified problem implies
    \begin{equation*}
    	\En(\tpplan^{\Delta}) 
    	+ \tpplan^\Delta\!:\! 
    	\left(\Delta\cdot\imath_{\mathbf{P}^\Delta = 0}\right)
    	\; \le \;
    	\En(\tpplan^*) 
    	+ \tpplan^*\!:\!
    	\left(\Delta\,\imath_{\mathbf{P}^\Delta = 0}\right).
    \end{equation*}
    Since $\tpplan^\Delta\!:\!\imath_{\mathbf{P}^\Delta = 0} =0$ by 
    construction, it follows that 
    \[
    	\En(\tpplan^\Delta)-\En(\tpplan^*)
    	\le \Delta \cdot\tpplan^*\!:\!\imath_{\mathbf{P}^\Delta = 0}
        \le \Delta \lVert \tpplan^* \rVert_1.
    \]
    and the desired conclusion holds.
\end{proof}

We now bound the cost of Algorithm~\ref{alg:adapted-ns} needed to reach 
an $\varepsilon$-solution, under the following assumption.

\begin{assumption}
\label{assum:strong-convex}
	The functions $\psi_1$ and $\psi_2$ in~\eqref{eq:uot} are strongly 
	convex with parameters $\mu_1$ and $\mu_2$, respectively, and have 
	Lipschitz continuous gradients with Lipschitz constants $L_1$ and 
	$L_2$, respectively.
\end{assumption}

Under this assumption, we can guarantee an improvement in the objective 
function at every iteration.

\begin{lemma}
\label{lem:energ-delta2}
	Let Assumption~\ref{assum:strong-convex} for problem~\eqref{eq:uot}
    Let $\tpplan$ and $\widetilde\tpplan$ respectively denote 
    the input and output of Algorithm \ref{alg:optimizing-union}, and 
    let $\Delta$ be the initial dual violation upon calling 
    Algorithm~\ref{alg:optimizing-union}. Then,
    \begin{equation*}
	\label{eq:energ-delta2}
        \En(\tpplan)-\En(\widetilde\tpplan) \ge \beta\Delta^2,
    \end{equation*}
    where 
    $\beta:=\frac{1}{2\left(\frac{\mu_1}{L_1^2}+\frac{\mu_2}{L_2^2}\right)}$.
\end{lemma}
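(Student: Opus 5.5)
The plan is to view the call to Algorithm~\ref{alg:optimizing-union} as a continuous path parametrized by the pushed amount $\delta$ on the edge $(i_0,j_0)$. Along this path I will track both the energy $\En$ and the remaining dual violation $\Delta(\delta):=\ff_{i_0}+\gg_{j_0}-\costmat_{i_0,j_0}$. Between two edge removals, the iterates are smooth functions of $\delta$, defined through \eqref{eq:gamma-dela-general} and Proposition~\ref{prop:comp-flow}. At a removal the flow on the removed edge is zero, so $\En$ and $\Delta$ are continuous across it. I will also use the fact, noted in the proof of Proposition~\ref{prop:nbopsuniontrees}, that the pruned parts are never touched again. Hence it suffices to bound $\En$ on each smooth segment and then concatenate the segments.

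\textbf{Step 1: energy derivative.} On a segment, the triplet satisfies \eqref{eq:dualfeasuot} and \eqref{eq:compuot} on the current trees. So $\nabla\psi_1$ and $\nabla\psi_2$ at the current marginals equal $-\ff$ and $-\gg$. Moreover, the flow variation is supported on tree edges, where $\costmat-\ff\oplus\gg=0$, plus the edge $(i_0,j_0)$. By the chain rule, this gives
\begin{equation*}
\frac{d}{d\delta}\En = \costmat_{i_0,j_0}-\ff_{i_0}-\gg_{j_0} = -\Delta(\delta).
\end{equation*}
Consequently $\En(\tpplan)-\En(\widetilde\tpplan)=\int_0^{\delta_{\rm fin}}\Delta(\delta)\,d\delta$, where the pushed amounts are summed over all segments.

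\textbf{Step 2: rate at which $\Delta$ decreases.} We have $\Delta(\delta)=\Delta-\gamma_+(\delta)-\gamma_-(\delta)$, so I need an upper bound on $\gamma_\pm'(\delta)$. Differentiating \eqref{eq:gamma-dela-general} gives $1=\gamma_-'\bigl(\sum_i(\psi_{1,i}^*)''+\sum_j(\psi_{2,j}^*)''\bigr)$ over $\tree^-$, and similarly on $\tree^+$. The second derivatives of the conjugates lie in $[1/L_k,1/\mu_k]$. I will not use the crude bound $\gamma'_\pm\le L$ directly. Instead, I will pass from dual shifts to the induced marginal changes $\bm h_1,\bm h_2$ and combine the two Assumption~\ref{assum:strong-convex} inequalities. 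The gradient jump satisfies $\|\nabla\psi_k(x+h)-\nabla\psi_k(x)\|\le L_k\|h\|$. Strong monotonicity gives $\langle\nabla\psi_k(x+h)-\nabla\psi_k(x),h\rangle\ge\mu_k\|h\|^2$. Together these yield a co-coercivity-type bound $\|\text{shift}\|^2\le (L_k^2/\mu_k)\langle\text{shift},h\rangle$. The target is an estimate of the form $\gamma_++\gamma_-\le 2\bigl(\tfrac{\mu_1}{L_1^2}+\tfrac{\mu_2}{L_2^2}\bigr)^{-1}\!\cdot(\text{mass pushed})$, possibly with the weights inverted. I expect this step to be the main obstacle, because matching exactly the combination $\mu_k/L_k^2$ appearing in $\beta$ depends on which of the two inequalities is applied to which marginal. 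If the direct bound comes out as $L$ rather than $L^2/\mu$, the plan is to fall back to the weaker but valid Lipschitz/strong-monotonicity pairing and to check whether it produces $\beta$.

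\textbf{Step 3: integrate.} Suppose Step~2 gives $\Delta(\delta)\ge\Delta-\delta/(2\beta)$, where $1/(2\beta)=\mu_1/L_1^2+\mu_2/L_2^2$ is read off as the slope. The algorithm stops only when $\Delta(\delta)=0$, so $\delta_{\rm fin}\ge 2\beta\Delta$. Integrating the linear lower bound then gives
\begin{equation*}
\En(\tpplan)-\En(\widetilde\tpplan)\ge\int_0^{2\beta\Delta}\Bigl(\Delta-\tfrac{\delta}{2\beta}\Bigr)d\delta=\beta\Delta^2,
\end{equation*}
which is the claim. Edge removals only shrink the trees, and hence only shrink the sums of second derivatives of the conjugates. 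So the slope bound of Step~2 must be shown uniform over all intermediate trees, including the final tree, which may reduce to the single edge $(i_0,j_0)$.
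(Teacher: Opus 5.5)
Your Steps~1 and~3 are sound. Along the path, $\frac{d}{d\delta}\En=-\Delta(\delta)$ holds because \eqref{eq:dualfeasuot} and \eqref{eq:compuot} hold on the current trees. $\En$ and $\Delta$ are continuous across removals, and the roots $i_0,j_0$ are never pruned.

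The gap is Step~2, which carries all the quantitative content. You leave it open, and you aim it at a target that cannot be reached. The slope required by the printed constant, $\gamma_+'+\gamma_-'\le\mu_1/L_1^2+\mu_2/L_2^2$, is false; so is the variant you write in Step~2. The reason is that the $\beta$ in the statement has its ratios inverted. It is not even dimensionally consistent, since $\beta\Delta^2$ must have the units of $\En$.

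Concretely, take $n_1=n_2=1$, $\costmat=0$, $\aalpha^{\oone}=\aalpha^{\ttwo}=1$, unit weights and $\tau^1=\tau^2=\tau<1$ in \eqref{eq:rot-l2}. Then $\mu_k=L_k=1/\tau$, and the initial plan is $0$ with $\Delta=2/\tau$. Equation \eqref{eq:gamma-dela-general} gives $\gamma_\pm(\delta)=\delta/\tau$, and the algorithm returns the plan $1$. So the energy drops by $1/\tau$, whereas the printed $\beta\Delta^2$ equals $1/\tau^3$.

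The paper's proof actually ends, in \eqref{eq:min-i0j0}, with $\beta=\frac{1}{2(L_1^2/\mu_1+L_2^2/\mu_2)}$. This is also the value used afterwards ($\beta=\tau/4$ in the $L^2$ case), and it is the constant you should aim for.

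What closes Step~2 is exactly the bound you set aside, and it is the stronger one, not the weaker. On every segment, all terms of $\sum_i(\psi_{1,i}^*)''+\sum_j(\psi_{2,j}^*)''$ over the current $\tree^-$ are nonnegative. The root term satisfies $(\psi_{1,i_0}^*)''\ge 1/L_1$, so $\gamma_-'\le L_1$; likewise $\gamma_+'\le L_2$ via $j_0$. Without second derivatives, the same follows from the $1/L_1$-strong monotonicity of $(\psi_{1,i_0}^*)'$ and the $1/L_2$-strong monotonicity of $(\psi_{2,j_0}^*)'$. Since the roots survive every removal, this bound is uniform over all intermediate trees. Steps~1 and~3 then give $\En(\tpplan)-\En(\widetilde\tpplan)\ge\Delta^2/(2(L_1+L_2))$. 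This dominates the paper's bound because $L_k\le L_k^2/\mu_k$, and it is tight in the example above.

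The paper argues statically instead:
\begin{itemize}
\item it writes one strong-convexity inequality between input and output;
\item it uses optimality of the output on the final tree to discard the linear term, with removed edges contributing nonnegatively;
\item it uses Lipschitz continuity of $\nabla\psi_k$ to turn marginal gaps into dual gaps at $i_0$ and $j_0$;
\item it minimizes over the split $a+b=\Delta$.
\end{itemize}
That route only compares endpoints and is shorter, but it needs strong convexity and loses the factor $L_k/\mu_k$. Your path-integral route, once Step~2 is filled in this way, needs only $L$-smoothness of the $\psi_k$ and gives the sharper constant.
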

\begin{proof}
	It suffices to consider the restriction of both transport plans to 
	the tree $\widetilde\tree=\tree^+\cup(i_0,j_0)\cup\tree^-$, as this 
	represents the only edges on which the plans may differ. For simplicity, 
	we overload the notation throughout the proof and use $\tpplan$, 
	$\widetilde\tpplan$, $\ff$ and $\gg$ to denote the restriction of 
	these variables to $\widetilde\tree$. By strong convexity of the 
	relaxation function, we have
	\begin{align*}
    \label{eq:diff-succ-energ}
        \En(\tpplan)-\En(\widetilde\tpplan)
        &\ge
        \langle \nabla \En(\widetilde\tpplan),\tpplan-\widetilde\tpplan\rangle
        +\frac{\mu_1}{2}\lVert (\tpplan 
        -\widetilde\tpplan)\indicm_{n_2}\rVert^2
        +\frac{\mu_2}{2}\lVert (\tpplan 
        -\widetilde\tpplan)^\top\indicm_{n_1}\rVert^2 \\
        &\ge 
        \frac{\mu_1}{2}\lVert (\tpplan 
        -\widetilde\tpplan)\indicm_{n_2}\rVert^2
        +\frac{\mu_2}{2}\lVert (\tpplan 
        -\widetilde\tpplan)^\top\indicm_{n_1}\rVert^2,
	\end{align*}
	where the second inequality follows from optimality of $\widetilde\tpplan$ 
	on $\widetilde\tree$, which ensures $\langle \nabla \En(\widetilde\tpplan),
	\tpplan-\widetilde\tpplan\rangle \ge 0$ (on $\widetilde\tree$).
	Using the first-order condition \eqref{eq:dualfeasuot} as well as 
	Lipschitz continuity of $\nabla \psi_1^*$ and $\nabla \psi_2^*$ gives
	\begin{eqnarray}
	\label{eq:auxenerg-delta2} 
        &
        &\frac{\mu_1}{2}\lVert (\tpplan -\widetilde\tpplan)\indicm_{n_1}\rVert^2
        +\frac{\mu_2}{2}\lVert (\tpplan 
        -\widetilde\tpplan)^\top\indicm_{n_2}\rVert^2
        \nonumber \\
        &=
        &\frac{\mu_1}{2} 
        \lVert \nabla \psi_1^*(-\ff)-\nabla \psi_1^*(-\widetilde\ff)\rVert_2^2
        +\frac{\mu_2}{2} \lVert \nabla \psi_2^*(-\gg)
        -\nabla \psi_2^*(-\widetilde\gg)\rVert_2^2 
        \nonumber \\
        &\ge 
        &\frac{\mu_1}{2L_1^2}\lVert  \ff-\widetilde\ff\rVert_2^2
        +\frac{\mu_2}{2L_2^2} \lVert  \gg-\widetilde\gg\rVert_2^2 
        \nonumber \\
        &\ge 
        &\frac{\mu_1}{2L_1^2} \left(  \ff_{i_0}-\widetilde\ff_{i_0}\right)^2
        +\frac{\mu_2}{2L_2^2} \left(  \gg_{j_0}-\widetilde\gg_{j_0}\right)^2.
    \end{eqnarray}
    Recall that $\ff_{i_0}+\gg_{j_0}=\costmat_{{i_0},{j_0}} + \Delta$, while 
    Algorithm~\ref{alg:optimizing-union} computes 
    $(\widetilde\ff_{i_0},\widetilde\gg_{j_0})$ such that
	\begin{equation*}
    	\begin{cases}
        \widetilde\ff_{i_0}+\widetilde\gg_{j_0}
        =\costmat_{{i_0},{j_0}} 
        \quad\text{(final dual complementarity)},\\
        \ff_{i_0}-\widetilde\ff_{i_0}\geq 0
        ~\text{and}~
        \gg_{j_0}-\widetilde\gg_{j_0}\geq 0
        \quad \text{(decrease of dual variables)}.
    	\end{cases}
	\end{equation*}
	It follows that the last right-hand side of~\eqref{eq:auxenerg-delta2} can 
	bounded below by the minimum value
	\begin{equation}
    \label{eq:min-i0j0}
    	\min_{\substack{a,b\geq 0\\ a+b=\Delta}} 
    	\frac{\mu_1}{2L_1^2}a^2+\frac{\mu_2}{2L_2^2}b^2 
    	= 
    	\frac{1}{2\left(\frac{L_1^2}{\mu_1}+\frac{L_2^2}{\mu_2}\right)}\Delta^2,
	\end{equation}
	proving the result.
\end{proof}

The result of Lemma~\ref{lem:energ-delta2} leads to the following 
complexity guarantee for appropriate pivot rules. For simplicity, we assume 
the most classical pivot rule is used, namely Dantzig's pivot 
rule~\cite[pp. 416-417]{nf1993orlin}, that selects the edge with maximum 
dual violation.

\begin{theorem}
\label{th:vareps-sol}
	Suppose that Assumption~\ref{assum:strong-convex} holds, and that we run 
	Algorithm~\ref{alg:adapted-ns} with Dantzig's pivot rule. Then, for any 
	$\varepsilon >0$, Algorithm~\ref{alg:adapted-ns} computes an 
	$\varepsilon$-solution in at most 
	$4\lVert\tpplan^*\rVert_1^2/(\beta\varepsilon)$ {\color{black} pivots}.
\end{theorem}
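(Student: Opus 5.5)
We combine Proposition~\ref{prop:delta-precision} with Lemma~\ref{lem:energ-delta2}, using the standard argument for sufficient-decrease methods. Let $\tpplan^{(k)}$ be the plan after $k$ pivots, $\En_k := \En(\tpplan^{(k)})$, and $\Delta_k$ the maximal dual violation $\max(\ff^{(k)}\oplus\gg^{(k)}-\costmat)$ at the start of pivot $k+1$. Each iterate satisfies $\tpplan^{(k)}\ge 0$, \eqref{eq:compuot} and \eqref{eq:dualfeasuot}, as guaranteed by Algorithm~\ref{alg:optimizing-union}. Hence it is $\Delta_k$-dual-feasible, and Proposition~\ref{prop:delta-precision} gives the gap bound $r_k := \En_k - \En^* \le \Delta_k \lVert\tpplan^*\rVert_1$. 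Under Dantzig's rule the pivot edge attains the violation $\Delta_k$, so Lemma~\ref{lem:energ-delta2} gives $r_k - r_{k+1} \ge \beta\Delta_k^2 \ge \beta r_k^2/\lVert\tpplan^*\rVert_1^2$.

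One subtlety concerns pivots where $i_0$ and $j_0$ lie in the same tree. There the cycle push~\eqref{eq:push-cycle} is performed first, and Lemma~\ref{lem:energ-delta2} compares the input and output of Algorithm~\ref{alg:optimizing-union} only. I will argue that the cycle push keeps the marginals fixed, so it does not change the relaxation terms. Along the pushed cycle the reduced cost is $-\Delta<0$, so the cycle push does not increase the energy. Moreover the violation at $(i_0,j_0)$ is still $\Delta$ when Algorithm~\ref{alg:optimizing-union} is called. Thus the per-pivot decrease $\beta\Delta_k^2$ survives. This bookkeeping is the main step I expect to need care, together with checking that the lemma's restriction to $\widetilde\tree$ is legitimate once edges are pruned.

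Next I solve the recursion $r_{k+1}\le r_k - c\, r_k^2$ with $c=\beta/\lVert\tpplan^*\rVert_1^2$. Since $r_{k+1}\le r_k$, dividing by $r_k r_{k+1}$ gives
\[
\frac{1}{r_{k+1}}-\frac{1}{r_k}\ge \frac{c\,r_k}{r_{k+1}}\ge c .
\]
Hence $1/r_K \ge cK$, that is, $r_K\le \lVert\tpplan^*\rVert_1^2/(\beta K)$, so an $\varepsilon$-solution is reached after $\lVert\tpplan^*\rVert_1^2/(\beta\varepsilon)$ pivots. This is within the claimed $4\lVert\tpplan^*\rVert_1^2/(\beta\varepsilon)$.

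If I cannot rule out an increase of $\En$ during the cycle push, I would fall back to a two-phase argument that leaves room for the factor $4$. While $r_k>\varepsilon$ we have $\Delta_k\ge \varepsilon/\lVert\tpplan^*\rVert_1$, so every pivot decreases $r$ by at least $\beta\varepsilon^2/\lVert\tpplan^*\rVert_1^2$. I would then bound $r_k$ in halving epochs: at most $\lVert\tpplan^*\rVert_1^2/(\beta r)$ pivots are needed to go from gap $2r$ to gap $r$. Summing this geometric series gives at most $2\lVert\tpplan^*\rVert_1^2/(\beta\varepsilon)$ pivots, doubled again to absorb the initial epoch. Either way, the conclusion follows from the same two ingredients: the gap bound from Proposition~\ref{prop:delta-precision} and the decrease from Lemma~\ref{lem:energ-delta2}.
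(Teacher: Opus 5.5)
Your proof is correct, and it reaches the bound by a different route than the paper.

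\textbf{The paper's route.} It cuts the run into dyadic phases $\Delta_\ell=\Delta_0/2^\ell$ of dual violation. It bounds the number of pivots in phase $\ell$ by the gap at the start of that phase (at most $\Delta_\ell\lVert\tpplan^*\rVert_1$, by Proposition~\ref{prop:delta-precision}) divided by the per-pivot decrease of Lemma~\ref{lem:energ-delta2}. It then sums the resulting geometric series in $2^\ell$, and the factor $4$ comes from this dyadic summation.

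\textbf{Your route.} You feed the same two ingredients into the sufficient-decrease recursion $r_{k+1}\le r_k-c\,r_k^2$ on the monotone optimality gap, and telescope in $1/r_k$. This gives the sharper count $\lVert\tpplan^*\rVert_1^2/(\beta\varepsilon)$. Integer rounding is harmless: $r_0-r_1\ge c\,r_0^2$ forces $r_0\le 1/c$, hence $r_K\le 1/(c(K+1))$. Besides the better constant, this avoids a slip in the paper's bookkeeping. A pivot in phase $\ell$ has violation in $(\Delta_{\ell+1},\Delta_\ell]$, so it is only guaranteed a decrease $\beta\Delta_{\ell+1}^2=\beta\Delta_\ell^2/4$, not the $\beta\Delta_\ell^2$ used there.

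\textbf{Same-tree pivots.} Your treatment of these, which the paper does not discuss, is also right. The push~\eqref{eq:push-cycle} fixes both marginals and leaves $(\ff,\gg)$ unchanged. It changes the transport cost by $-\theta_{\mathcal C}\Delta\le 0$, using $\costmat_{i,j}=\ff_i+\gg_j$ on tree edges. The proof of Lemma~\ref{lem:energ-delta2} only uses three properties of its input: it is supported on $\tree^-\cup(i_0,j_0)\cup\tree^+$, it satisfies~\eqref{eq:dualfeasuot}, and it has violation $\Delta$ at $(i_0,j_0)$. All three hold after the push.

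\textbf{What the paper's approach buys.} Its threshold-based phases only need each pivot's violation to exceed the current threshold, not to be maximal. That is why the argument carries over to the scaling pivot rule. Your recursion instead needs the pivot's violation to be at least a fixed fraction of the maximum.

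\textbf{Your fallback paragraph.} It is essentially the paper's phase argument, indexed by the gap instead of the violation. It is unnecessary. It also would not rescue you if the push could increase $\En$, since it relies on the same per-pivot decrease.
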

\begin{proof}
	Let $\Delta_{\ell} = \Delta/2^\ell$, where $\Delta=\Delta_0$ denote the 
	initial dual violation at iteration $0$ (if there is no dual violation, 
	the method terminates and the result holds). Consider the sequence 
	$\{\tpplan_{\Delta_{\ell}}\}$, where $\tpplan_{\Delta_{\ell}}$ is the plan 
	obtained from $\tpplan_{\Delta_{\ell-1}}$ by considering dual violations 
	less than $\Delta_{\ell-1}$ but larger than $\Delta_{\ell}$. 
    For any $\ell \ge 0$, Proposition~\ref{prop:delta-precision} together 
    with Lemma~\ref{lem:energ-delta2} guarantee
    \begin{equation*}
        \En(\tpplan_{\Delta_{\ell}})-\En^* 
        \le 
        \Delta_{\ell}\lVert \tpplan^*\rVert_1.
        \qquad \mbox{and} \qquad
        \En(\tpplan_{\Delta_{\ell}})-\En(\tpplan_{\Delta_{\ell+1}})
        \ge 
        \beta n_\ell \Delta_{\ell}^2.
    \end{equation*}
    Therefore, letting $n_{\ell}$ the number of iterations performed to go 
    from $\tpplan_{\Delta_{\ell}}$ to $\tpplan_{\Delta_{\ell+1}}$, we have
    \begin{equation*}
        n_{\ell}
        \le \frac{\En(\tpplan_{\Delta_{\ell}})-\En(\tpplan_{\Delta_{\ell+1}})}
        {\beta\Delta_\ell^2}
        \le \frac{\En(\tpplan_{\Delta_{\ell}})-\En_{*}}{\beta\Delta_{\ell}^2}
        \le \frac{\lVert \tpplan^*\rVert_1}{\beta\Delta_{\ell}} 
        = \frac{\lVert \tpplan^*\rVert_1 2^{\ell}}{\beta\Delta}.
    \end{equation*}
    This reasoning shows that an $\varepsilon$-solution is reached after 
    considering at most ${\ell}_\varepsilon 
    = \lceil \ln(\lVert \tpplan^*\rVert_1 \Delta/\varepsilon)/\ln 2\rceil$ 
    terms of the form $\tpplan_{\Delta_{\ell}}$. It follows that the total 
    number of iterations performed by Algorithm~\ref{alg:adapted-ns} is 
    \begin{equation*}
        \sum_{\ell=0}^{\ell_\varepsilon} n_\ell 
        \le 
        \frac{\lVert \tpplan^*\rVert_1}{\beta\Delta_0}
        \left(2^{\ell_\varepsilon+1}-1\right)
        \le
        \frac{4\lVert \tpplan^*\rVert_1^2}{\beta\varepsilon}.
    \end{equation*}
\end{proof}

Note also that the result of Theorem~\ref{th:vareps-sol} applies to other 
pivot rules described in Section~\ref{subsec:numpivots}, but not all of them 
as the analysis relies on sufficient dual violation at every iteration. Note 
also that every iteration of Algorithm~\ref{alg:adapted-ns} has a cost in 
$\mathcal{O}(n_1\,n_2)$, corresponding to the worst-case calculation of the 
pivot (which also bounds the number of iterations of 
Algorithm~\ref{alg:optimizing-union}. 

\subsection{\texorpdfstring{UOT-$L^2$ and SUOT-$L^2$ instances}{UOT/SUOT-L2}}
\label{subsec:complexityL2}

We first leverage Theorem \ref{th:vareps-sol} to bound the total number of 
iterations until termination (reaching exact solution) for UOT-$L^2$ 
problem~\eqref{eq:rot-l2}. For this purpose, we assume that the problem data 
is rational.

\begin{assumption}
\label{assum:integral-data}
	The matrix $\costmat$ and the measures $\aalpha^{\bm{1}}$,$\aalpha^{\bm 2}$ 
	defining problem~\eqref{eq:rot-l2} have integer coefficients. In addition, 
	the weight vectors $\ww^1$ and $\ww^2$ are vectors of all ones, and 
	$\tau^1=\tau^2=\tau$ is a rational number.
\end{assumption}

Theorem \ref{th:vareps-sol} readily guarantees that 
Algorithm~\ref{alg:adapted-ns} (using Dantzig's pivot rule) reaches an 
$\varepsilon$-solution of the problem in at most 
$16\lVert \tpplan^*\rVert_1^2/(\tau\varepsilon)$ iterations (as $\beta=\tau/4$ 
in this case). In order to show convergence towards an exact solution, we first 
bound $\lVert\tpplan^*\rVert_1$ in Lemma \ref{lem:bound-l1-luot-l2}. 

\begin{lemma}
\label{lem:bound-l1-luot-l2}
	For any triplet $(\tpplan,\ff,\gg)$ satisfying primal feasibility 
	$\tpplan \ge 0$, complementary condition \eqref{eq:compuot} and 
	first order optimality condition \eqref{eq:dualfeasuot} on a tree subgraph of 
	$(V \cup V',E)$, we have
    \begin{equation*}
        \lVert\tpplan\rVert_1
        \leq
            \lVert \aalpha^{\bm 1}\rVert_1+\lVert \aalpha^{\bm 2}\rVert_1.
    \end{equation*}
    In particular, the result holds for every iterate of 
   	Algorithm~\ref{alg:adapted-ns} and any solution of \eqref{eq:rot-l2}.
\end{lemma}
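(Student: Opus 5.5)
The plan is to combine the explicit form of the first-order conditions \eqref{eq:dualfeasuot} with complementarity \eqref{eq:compuot} and nonnegativity of $\costmat$, using a sign argument on the dual potentials rather than summing potentials over the tree.

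\textbf{Step 1: explicit marginals.} Under Assumption~\ref{assum:integral-data} we have $(\psi_{1,i}^*)'(y)=\aalpha^{\oone}_i+\tau y$ and $(\psi_{2,j}^*)'(z)=\aalpha^{\ttwo}_j+\tau z$. Writing $m_i:=(\tpplan\indicm_{n_2})_i$ and $n_j:=(\tpplan^\top\indicm_{n_1})_j$, condition \eqref{eq:dualfeasuot} then reads
\[
m_i=\aalpha^{\oone}_i-\tau\ff_i, \qquad n_j=\aalpha^{\ttwo}_j-\tau\gg_j
\]
at every node of the tree. Consequently $m_i>\aalpha^{\oone}_i$ if and only if $\ff_i<0$, and $n_j<\aalpha^{\ttwo}_j$ if and only if $\gg_j>0$. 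Nodes outside the support carry zero marginal, and since the measures are nonnegative they satisfy $m_i\le\aalpha^{\oone}_i$ trivially.

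\textbf{Step 2: split the total mass by the sign of $\ff_i$.} I will write
\[
\lVert\tpplan\rVert_1=\sum_i m_i\le \sum_{i:\ff_i\ge 0}\aalpha^{\oone}_i+\sum_{i:\ff_i<0} m_i\le \lVert\aalpha^{\oone}\rVert_1+\sum_{i:\ff_i<0} m_i .
\]
It remains to bound the last sum by $\lVert\aalpha^{\ttwo}\rVert_1$.

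\textbf{Step 3: control the overshooting sources.} This is the key step. Take $i$ with $\ff_i<0$ and $j$ with $\tpplan_{i,j}>0$. Complementarity \eqref{eq:compuot} gives $\ff_i+\gg_j=\costmat_{i,j}\ge 0$, hence $\gg_j>0$ and therefore $n_j<\aalpha^{\ttwo}_j$. So all the mass leaving overshooting sources arrives at targets in $J:=\{j:\gg_j>0\}$. This yields
\[
\sum_{i:\ff_i<0} m_i=\sum_{i:\ff_i<0}\,\sum_{j\in J}\tpplan_{i,j}\le\sum_{j\in J} n_j\le \sum_{j\in J}\aalpha^{\ttwo}_j\le\lVert\aalpha^{\ttwo}\rVert_1,
\]
where nonnegativity of $\tpplan$ is used in the middle inequality. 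Combining this with Step 2 gives the claimed bound.

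\textbf{Step 4: applicability.} Iterates of Algorithm~\ref{alg:adapted-ns} satisfy $\tpplan\ge0$, \eqref{eq:compuot} and \eqref{eq:dualfeasuot} on their support forest by construction. Any solution of \eqref{eq:rot-l2} satisfies the full KKT system, and in particular these conditions on its support. The argument above never uses acyclicity, only edgewise complementarity, so it holds on forests as well as on trees.

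The main obstacle is finding the right inequality. Summing the relations of Step 1 directly leads to $\sum_i\ff_i+\sum_j\gg_j$, whose sign is not controlled by the tree structure. The sign-splitting argument of Steps 2 and 3 avoids this issue, so I would pursue it first.
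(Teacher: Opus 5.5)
Your argument is correct, and it takes a genuinely different route from the paper's proof.

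\textbf{The paper's route.} The paper sums the first-order conditions to obtain
\[
2\lVert\tpplan\rVert_1=\lVert\aalpha^{\oone}\rVert_1+\lVert\aalpha^{\ttwo}\rVert_1-\tau\Bigl(\sum_i\ff_i+\sum_j\gg_j\Bigr).
\]
It does not try to control the sign of the potential sum, which, as you observe, is not available. Instead it bounds this sum from below by $-\lVert\tpplan\rVert_1/\tau$, as follows. It takes a maximal matching of the support and uses $\tau(\ff_i+\gg_j)=\tau\costmat_{i,j}\ge 0$ on matched edges. On unmatched nodes it uses $\tau\ff_i=\aalpha^{\oone}_i-(\tpplan\indicm)_i\ge-(\tpplan\indicm)_i$, and similarly for $\gg_j$. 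Finally, unmatched nodes form an independent set, so their marginals add up to at most $\lVert\tpplan\rVert_1$.

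\textbf{Your route.} Your sign-splitting replaces both the identity and the matching with a direct routing argument. Mass leaving sources with $\ff_i<0$ can only reach targets with $\gg_j>0$, and those targets are under-served.

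\textbf{What each approach needs.} Both proofs use the same raw ingredients: $\tpplan\ge0$, $\costmat\ge0$, $\aalpha^k\ge0$, and edgewise complementarity. Neither needs acyclicity. Beyond that, yours only uses that $(\psi_{k,i}^*)'$ is nondecreasing with $(\psi_{k,i}^*)'(0)=\aalpha^k_i$, i.e.\ that $\aalpha^k$ minimizes $\psi_k$. It therefore carries over verbatim to:
\begin{itemize}
\item arbitrary weights and $\tau^1\neq\tau^2$ in the $L^2$ case;
\item $\KL$ relaxations with any $\tau^1,\tau^2$;
\item the $L^p$ relaxations of Section~\ref{sec:num-exp}.
\end{itemize}
The paper's pairing on matched edges, by contrast, relies on the linear first-order conditions having the same coefficient $\tau$ on both sides, as in Assumption~\ref{assum:integral-data}. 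With unequal coefficients, $\frac{\tau^1}{w^1_i}\ff_i+\frac{\tau^2}{w^2_j}\gg_j$ can be negative even though $\ff_i+\gg_j\ge0$.

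\textbf{A refinement.} Your argument also gives the sign-dependent bound $\lVert\tpplan\rVert_1\le\sum_{i:\ff_i\ge0}\aalpha^{\oone}_i+\sum_{j:\gg_j>0}\aalpha^{\ttwo}_j$. This is the analogue of the paper's support-dependent bound~\eqref{eq:last-step-bound-l2}.

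\textbf{One bookkeeping point.} In Step~3, either restrict $J$ to targets in the support, or note, as you did for sources in Step~2, that targets outside the support have zero marginal and hence lie below $\aalpha^{\ttwo}_j$.
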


\begin{proof}
	The optimality conditions~\eqref{eq:dualfeasuot} give
    \begin{equation*}
        2\lVert\tpplan\rVert_{1,\tree}
        =
        \lVert \aalpha^{\bm 1}\rVert_{1,\tree}
        +\lVert \aalpha^{\bm 2}\rVert_{1,\tree}
        -\tau\left(\sum_{i\in\tree}\ff_i
        +\sum_{j\in\tree}\gg_j\right),
    \end{equation*}
    where $\|\cdot\|_{1,\tree}$ only considers coefficients corresponding 
    to edges of $\tree$. Let $\tree$ be the tree (or forest) associated with 
    the support of $\tpplan$, and let 
    $\mathcal{A}_\tree=\{(i_1,j_1),\dots,(i_k,j_k)\}$ be a 
    maximal matching of $\tree$ (i.e. a set of edges that do not share 
    vertices and are maximal in the graph for that property), and 
    $\bar{\mathcal{A}}_\tree$ be its complementary subgraph (with 
    complementary edges). We have
    \begin{equation*}
	\begin{aligned}
		\tau\left(\sum_{i\in\tree}\ff_i +\sum_{j\in\tree}\gg_j\right)
		&= 
		\tau\sum_{(i,j)\in\mathcal{A}_\tree}(\ff_i+\gg_j)  
		+  \tau\sum_{i\in\bar{\mathcal{A}}_\tree}\ff_i
		+  \tau\sum_{j\in\bar{\mathcal{A}}_\tree}\gg_i
		\\
		&=
		\tau\sum_{\ell=1}^{k} \costmat_{i_\ell,j_\ell} 
		+ \sum_{i\in\bar{\mathcal{A}}_\tree}(\aalpha_i^{\bm 1}
		-(\tpplan\indicm)_i)
		+ \sum_{j\in\bar{\mathcal{A}}_\tree}(\aalpha_j^{\bm 2}
		-(\tpplan^\top\indicm)_j)
		\\
		&\ge 
		-\left(\sum_{i\in\bar{\mathcal{A}}_\tree}(\tpplan\indicm)_i
		+\sum_{j\in\bar{\mathcal{A}}_\tree}(\tpplan^\top\indicm)_j\right)
	\end{aligned}
	\end{equation*}
	By definition of $\mathcal{A}_\tree$, for any pair $(i,j) \in V \cup V'$ 
	in $\bar{\mathcal{A}}_{\tree}$, the edge $(i,j)$ either does not exist 
	or belongs to $\mathcal{A}_\tree$. Thus,
    \begin{equation}
	\label{eq:last-step-bound-l2}
	\left(\sum_{i\in\bar{\mathcal{A}}_\tree}(\tpplan\indicm)_i
            +\sum_{j\in\bar{\mathcal{A}}_\tree}(\tpplan^\top\indicm)_j\right)
            \leq \lVert \tpplan\rVert_{1,\tree}
    \end{equation}
    from which the result follows.
\end{proof}

Note that the result of Lemma~\ref{lem:bound-l1-luot-l2} can seemingly be 
improved by using~\eqref{eq:last-step-bound-l2}. However, the bound would 
then use a quantity that depends on the support of $\tpplan$, which cannot 
be known in advance, especially at the solution.

\begin{remark}
	A result similar to that of Lemma~\ref{lem:bound-l1-luot-l2} holds for 
	$\mathrm{UOT-}\KL$ problem \eqref{eq:rot-kl} assuming 
	$\tau^1=\tau^2=\tau$. Indeed,
    \begin{equation*}
        \begin{aligned}
            0&\geq -\tau \tpplan\!:\! \costmat\\ 
            &= -\tau\sum_{i=1}^{n_1} \ff_i (\tpplan\indicm_{n_2})_i
            -\tau
            \sum_{i=1}^{n_2}
            \gg_i (\tpplan^\top\indicm_{n_1})_i\\
             &= \sum_{i=1}^{n_1} (\tpplan\indicm_{n_2})_i\log\left(
         \frac{(\tpplan\indicm_{n_2})_i}{\aalpha_i^{\oone}}
             \right)
             +
             \sum_{i=1}^{n_2} (\tpplan^\top\indicm_{n_1})_i\log\left(
         \frac{(\tpplan^\top\indicm_{n_1})_i}{\aalpha_i^{\ttwo}}
             \right)\\
             &\geq 
             \lVert\tpplan\rVert_1
         \log\left(\frac{\lVert\tpplan\rVert_1}{\lVert \aalpha^{\oone}\rVert_1}\right)+
         \lVert\tpplan\rVert_1
         \log\left(\frac{\lVert\tpplan\rVert_1}{\lVert \aalpha^{\ttwo}\rVert_1}\right),
        \end{aligned}
    \end{equation*}
    where the second line is obtained by complementary condition, the third one by first order 
    conditions \eqref{eq:dualfeasuot} and the last one applying log-sum inequality. 
    From this last line, we deduce that 
    $\lVert\tpplan\rVert_1\leq \sqrt{\lVert \aalpha^{\oone}\rVert_1 
    \lVert \aalpha^{\ttwo}\rVert_1}$. Note that the same relations have been 
    obtained in previous work~\cite{nguyen2023unbalanced}, but our primal-dual 
    perspective allows for a shorter proof. 
\end{remark}

We now prove that reaching a certain precision on dual feasibility implies 
optimality rather than approximate optimality thanks to rational arithmetic.

\begin{lemma}
\label{lem:termination-cond-l2}
	Under Assumption \ref{assum:integral-data} on problem~\eqref{eq:rot-l2}, 
	any $(\tpplan,\ff,\gg)$ satisfying $\tpplan\geq 0$, \eqref{eq:compuot}, 
	\eqref{eq:dualfeasuot} and $\tfrac{1}{n_1\,n_2+1}$-dual-feasibility is optimal 
	for \eqref{eq:rot-l2}.
\end{lemma}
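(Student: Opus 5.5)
The plan is to use the rational structure of the problem: we show that every dual violation $\ff_i+\gg_j-\costmat_{i,j}$ of such a triplet is either nonpositive or bounded below by an explicit positive rational. Once that gap exceeds $\tfrac{1}{n_1n_2+1}$, $\tfrac{1}{n_1n_2+1}$-dual-feasibility forces every violation to be $\le 0$. Exact dual feasibility, together with $\tpplan\ge 0$, \eqref{eq:compuot} and \eqref{eq:dualfeasuot}, gives the full KKT system of the convex problem~\eqref{eq:rot-pd}, so optimality follows.

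\textbf{Step 1: parametrize the potentials on each component.} Let $\tree_1,\dots,\tree_m$ be the connected components (trees) of the support of $\tpplan$; an isolated vertex counts as a component. On each edge $(i,j)$ of $\tree_k$, complementarity~\eqref{eq:compuot} gives $\ff_i+\gg_j=\costmat_{i,j}$. Propagating from the root, we can write $\ff_i=a_i+c_k$ and $\gg_j=b_j-c_k$, where the $a_i,b_j$ are integers (signed sums of entries of $\costmat$) and $c_k\in\R$ is a single shift for the whole component.

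\textbf{Step 2: pin down each shift by mass balance.} On a tree component, total outgoing mass equals total incoming mass, $\sum_{i\in\leftt[\tree_k]}(\tpplan\indicm)_i=\sum_{j\in\rightt[\tree_k]}(\tpplan^\top\indicm)_j$. With unit weights and equal $\tau$, \eqref{eq:dualfeasuot} reads $(\tpplan\indicm)_i=\aalpha^{\oone}_i-\tau\ff_i$ and $(\tpplan^\top\indicm)_j=\aalpha^{\ttwo}_j-\tau\gg_j$. Substituting the parametrization gives a linear equation for $c_k$ whose solution is
\[
c_k=\frac{N_k}{\tau\,(\#\tree_k)}+\frac{M_k}{\#\tree_k},
\]
with integers $N_k,M_k$.

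\textbf{Step 3: bound the nonzero violations from below.} Take a pair $(i,j)$.
\begin{itemize}
\item If $i$ and $j$ lie in the same component, the violation is $a_i+b_j-\costmat_{i,j}\in\ZZ$. A positive value is therefore at least $1$.
\item If $i\in\tree_k$ and $j\in\tree_l$ with $k\neq l$, the violation is an integer plus $c_k-c_l$. Its denominator divides $\tau$'s numerator times $\#\tree_k\cdot\#\tree_l$.
\end{itemize}
Since $\#\tree_k+\#\tree_l\le n_1+n_2$, a positive violation in the second case is at least $1/(\#\tree_k\,\#\tree_l)$ up to the $\tau$ factor. We then need the elementary counting bound $\#\tree_k\,\#\tree_l\le n_1n_2+1$ (or a comparable bound) to conclude.

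\textbf{Main obstacle.} The delicate step is the denominator bookkeeping for $\tau=p/q$. The $1/\tau$ in $c_k$ introduces a factor $p$, so the gap is really of order $1/(p\,\#\tree_k\#\tree_l)$. I would first try to absorb this by a rescaling: multiply $\costmat$ and the potentials by $\tau$ (equivalently, measure violations in units where $\tau$ is an integer), so that the integrality argument applies cleanly. I would then check that the product of component sizes on the two sides of a violating pair is indeed bounded by $n_1n_2+1$. The trees are bipartite with at most $n_1$ sources and $n_2$ targets in total, so a split of sources and targets between two components keeps the product of sizes controlled.
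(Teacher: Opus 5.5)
Your plan is the paper's own proof: one shift per support component, fixed by mass balance, then a grid argument on the violations $\ff_i+\gg_j-\costmat_{i,j}$. Steps 1--2 and the closing KKT step are fine. The two points you leave open, however, are exactly where the argument breaks, and they cannot be closed because the statement as written is false. The paper's proof goes through the same two points with the same defects: its ``without loss of generality $p=1$'' and its claim ``$\#\tree_\ell\#\tree_{\ell'}\le n_1n_2$''.

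\textbf{The factor $p$ cannot be rescaled away.} Rescaling the problem (objective times $p$, so $\costmat\to p\costmat$ and $\tau\to 1/q$) multiplies $\ff,\gg$ and every violation by $p$. The threshold $\tfrac{1}{n_1n_2+1}$ is not rescaled. So your bookkeeping only places violations in $\tfrac{1}{p\,\mathrm{lcm}(\#\tree_k,\#\tree_l)}\ZZ$, and this gap is real. Take $n_1=n_2=2$, $\costmat=0$, $\aalpha^{\oone}=(2,1)$, $\aalpha^{\ttwo}=(1,1)$, $\tau=3$, and $\tpplan_{1,1}=3/2$, $\tpplan_{2,2}=1$, $\tpplan_{1,2}=\tpplan_{2,1}=0$. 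Then \eqref{eq:dualfeasuot} gives $\ff=(1/6,0)$ and $\gg=(-1/6,0)$, and \eqref{eq:compuot} holds. The only positive violation is $\ff_1+\gg_2-\costmat_{1,2}=1/6<1/5$. Yet $\partial\En/\partial\tpplan_{1,2}=\costmat_{1,2}-\ff_1-\gg_2=-1/6<0$, so $\tpplan$ is not optimal.

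\textbf{The counting bound is false, even for $p=1$.} Disjointness only gives $\#\tree_k+\#\tree_l\le n_1+n_2$, hence $\#\tree_k\#\tree_l\le\lfloor (n_1+n_2)^2/4\rfloor$. That can exceed $n_1n_2+1$, so $\#\tree_k\#\tree_l\le n_1n_2+1$ does not hold in general. Take $\tau=1$, $n_1=2$, $n_2=5$, $\costmat=0$, $\aalpha^{\oone}=(2,3)$, $\aalpha^{\ttwo}=(2,2,2,2,2)$. Use the plan $\tpplan_{1,1}=\tpplan_{1,2}=4/3$ and $\tpplan_{2,3}=\tpplan_{2,4}=\tpplan_{2,5}=5/4$. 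Its components have sizes $3$ and $4$, with $\mathrm{lcm}=12>11$. This gives $\ff=(-2/3,-3/4)$ and $\gg=(2/3,2/3,3/4,3/4,3/4)$. The maximal violation is $1/12<1/11$, on the pairs $(1,j)$ with $j\ge 3$. Again $\partial\En/\partial\tpplan_{1,3}=-1/12<0$, so $\tpplan$ is not optimal.

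\textbf{What your argument does prove.} It establishes a corrected statement. Any triplet satisfying $\tpplan\ge0$, \eqref{eq:compuot} and \eqref{eq:dualfeasuot} that is $\epsilon$-dual-feasible with $\epsilon<1/\bigl(p\lfloor (n_1+n_2)^2/4\rfloor\bigr)$ is optimal. This includes the stated threshold $\tfrac{1}{n_1n_2+1}$ when $p=1$ and $|n_1-n_2|\le 1$, but not in general.
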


\begin{proof}
	Suppose that $\tau=p/q$ where $p$ and $q$ are positive integers. Without loss 
	of generality, we can assume that $p=1$, since replacing $\costmat$ by 
	$p \costmat$ would preserve Assumption~\ref{assum:integral-data}. 
	For each tree of the support, we can build integer dual variables
	$\widetilde\ff$ and $\widetilde\gg$ that satisfy 
	$\widetilde\ff_i+\widetilde\gg_j = \costmat_{i,j}$.
    Thus, for each tree $\tree_\ell$ of the support of $\tpplan$, we have 
    $(\ff,\gg)=(\widetilde\ff-\gamma_\ell,\widetilde\gg+\gamma_\ell)$, 
    where
    \begin{equation*}
        \gamma_\ell
        =
        \frac{1}{\tau}\left(\sum_{i\in\leftt[\tree_\ell]}\aalpha_i^{\bm 1}
        -
        \sum_{j\in\rightt[\tree_\ell]}\aalpha_j^{\bm 2}\right)
        +
        \frac{1}{\#\tree_\ell}\left(
            \sum_{i\in\leftt[\tree_\ell]}\widetilde\ff_i
            -
            \sum_{j\in\rightt[\tree_\ell]}\widetilde\gg_j
        \right)
        \in\ZZ+\#\tree_\ell^{-1}\ZZ = \#\tree_\ell^{-1}\ZZ.
    \end{equation*}
    Hence, for any $i\in\leftt[\tree_\ell]$ and $j\in\rightt[\tree_{\ell'}]$, 
    we obtain:
    \begin{equation*}
        \costmat_{i,j}-(\ff\oplus\gg)_{i,j} 
        = 
        \costmat_{i,j}-\widetilde\ff_i+\gamma_\ell
        -\widetilde\gg_j-\gamma_{\ell'}
        \in 
        \frac{1}{\#\tree_\ell}\ZZ+\frac{1}{\#\tree_{\ell'}}\ZZ.
    \end{equation*}
    Since $\#\tree_\ell^{-1}\ZZ+\#\tree_{\ell'}^{-1}\ZZ =
    \mathrm{lcm}\left(\#\tree_{\ell}, \#\tree_{\ell'}\right)^{-1}\ZZ$, 
    where $\mathrm{lcm}$ denotes the least common multiple, and
    and $\mathrm{lcm}\left(\#\tree_{\ell}, \#\tree_{\ell'}\right)\leq
    \#\tree_{\ell}\#\tree_{\ell'}\leq n_1n_2$, we find that
    \begin{equation*}
        \min_{(i,j)\in\mathcal{E}}
        \costmat_{i,j} - (\ff\oplus\gg)_{i,j} \geq-\frac{1}{n_1n_2+1}
        \implies 
        \min_{(i,j)\in\mathcal{E}} 
        \costmat_{i,j} - (\ff\oplus\gg)_{i,j} \geq 0,
    \end{equation*}
    hence the desired conclusion. 
\end{proof}

We can finally state our convergence result for UOT-$L^2$.

\begin{corollary}
\label{co:termination-uot-l2}
	Under Assumptions~\ref{assum:strong-convex} 
	and Assumption~\ref{assum:integral-data}, Algorithm \ref{alg:adapted-ns} 
	with Dantzig's pivot rule solves UOT-$L^2$ problem \eqref{eq:rot-l2} in
    at most $16(\lVert\aalpha^{\bm 1}\rVert_1
    +\lVert\aalpha^{\bm 2}\rVert_1)^2(n_1n_2+1)/\tau$ {\color{black}pivots, and runs in $16(\lVert\aalpha^{\bm 1}\rVert_1
    +\lVert\aalpha^{\bm 2}\rVert_1)^2n_1n_2(n_1n_2+1)/\tau$ time.}
\end{corollary}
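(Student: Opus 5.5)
The corollary follows by chaining Theorem~\ref{th:vareps-sol}, Lemma~\ref{lem:bound-l1-luot-l2} and Lemma~\ref{lem:termination-cond-l2}. The pivot bound comes from applying Theorem~\ref{th:vareps-sol} with a precision $\varepsilon$ fine enough that Lemma~\ref{lem:termination-cond-l2} forces exact optimality.

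First, under Assumption~\ref{assum:integral-data} the relaxations are $\psi_k(\bm x)=\frac{1}{2\tau}\|\bm x-\aalpha^k\|_2^2$, so $\mu_k=L_k=1/\tau$. This gives $\beta=\tau/4$, as noted before Lemma~\ref{lem:bound-l1-luot-l2}, and Theorem~\ref{th:vareps-sol} yields an $\varepsilon$-solution within $16\|\tpplan^*\|_1^2/(\tau\varepsilon)$ pivots. Lemma~\ref{lem:bound-l1-luot-l2}, applied to a solution $\tpplan^*$, gives $\|\tpplan^*\|_1\le \|\aalpha^{\bm 1}\|_1+\|\aalpha^{\bm 2}\|_1$.

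It remains to convert approximate optimality into exact optimality. I would rely on the structure of the proof of Theorem~\ref{th:vareps-sol} rather than its statement. The phases indexed by $\ell$ are such that, after $\sum_{\ell<L} n_\ell$ pivots, Dantzig's rule certifies that the current triplet is $\Delta_L$-dual-feasible: the maximal violation is below $\Delta_L$. Every iterate also satisfies $\tpplan\ge0$, \eqref{eq:compuot} and \eqref{eq:dualfeasuot} after each call to Algorithm~\ref{alg:optimizing-union}, by Proposition~\ref{prop:nbopsuniontrees}. So I choose $L$ as the first index with $\Delta_L\le 1/(n_1n_2+1)$. At that point Lemma~\ref{lem:termination-cond-l2} shows the iterate is exactly optimal, and Dantzig's rule finds no violating edge, so the algorithm stops.

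The count of pivots before reaching phase $L$ is bounded using $n_\ell\le \|\tpplan^*\|_1 2^\ell/(\beta\Delta)$ and a geometric sum. This gives at most $2\|\tpplan^*\|_1/(\beta\Delta_{L-1})$, where $\Delta_{L-1}>1/(n_1n_2+1)$. The result is $\lesssim 8\|\tpplan^*\|_1(n_1n_2+1)/\tau$, or, taking the paper's statement literally via $\varepsilon=\|\tpplan^*\|_1/(n_1n_2+1)$, at most $16\|\tpplan^*\|_1(n_1n_2+1)/\tau$.

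This last step is where I expect trouble matching the stated constant. The stated bound has $\|\cdot\|_1^2$ where my computation gives one power. I would reconcile this either by noting $\|\tpplan^*\|_1\le\|\tpplan^*\|_1^2$ when the integer data is nonzero, so that $\|\aalpha\|_1\ge1$ whenever the solution is nontrivial, or by using $\varepsilon=1/(n_1n_2+1)$ directly in Theorem~\ref{th:vareps-sol}. For the latter I would argue that an $\varepsilon$-solution produced at the end of a phase is $\Delta$-dual-feasible with $\Delta\le\varepsilon$. Both routes give the claimed $16(\|\aalpha^{\bm1}\|_1+\|\aalpha^{\bm2}\|_1)^2(n_1n_2+1)/\tau$.

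For the time bound, each pivot costs $\mathcal O(n_1n_2)$. This covers the Dantzig scan over all edges, and it also dominates Algorithm~\ref{alg:optimizing-union}: that routine uses $\mathcal O(\#\tree^-+\#\tree^+)$ iterations by Proposition~\ref{prop:nbopsuniontrees}, and with the closed forms \eqref{eq:qoi-uot-l2} and the remark following it, each iteration is linear in the tree size. Multiplying the pivot count by $n_1n_2$ gives the claimed running time.
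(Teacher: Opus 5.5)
Your argument is correct and uses the same chain as the paper. The paper's proof is one line: take $\varepsilon=1/(n_1n_2+1)$ in Theorem~\ref{th:vareps-sol}, bound $\|\tpplan^*\|_1$ by Lemma~\ref{lem:bound-l1-luot-l2}, and conclude with Lemma~\ref{lem:termination-cond-l2}.

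At the one delicate step, you are more careful than the paper. Lemma~\ref{lem:termination-cond-l2} needs $\frac{1}{n_1n_2+1}$-dual-feasibility, not an energy gap. The phase argument behind Theorem~\ref{th:vareps-sol}, run with that $\varepsilon$, only certifies a final violation of order $\varepsilon/\|\tpplan^*\|_1$. That need not be below $\varepsilon$ when $\|\tpplan^*\|_1$ is small. Stopping at the first phase with $\Delta_L\le 1/(n_1n_2+1)$ supplies exactly the hypothesis of Lemma~\ref{lem:termination-cond-l2}, and it even gives a pivot count linear in $\|\tpplan^*\|_1$. For the same reason, your second reconciliation route (an $\varepsilon$-solution at the end of a phase is $\varepsilon$-dual-feasible) is not justified in general, so keep only the first.

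That first route contains a slip, though. The inequality $\|\tpplan^*\|_1\le\|\tpplan^*\|_1^2$ does not follow from integrality of the data. Take $n_1=n_2=1$, $\costmat=1$, $\aalpha^{\bm 1}=\aalpha^{\bm 2}=1$ and $\tau=1$: the problem is $\min_{p\ge 0} p+(p-1)^2$, so $\tpplan^*=1/2$. Instead, apply Lemma~\ref{lem:bound-l1-luot-l2} first. Then use $A\le A^2$ for $A:=\|\aalpha^{\bm 1}\|_1+\|\aalpha^{\bm 2}\|_1$, which holds because $A$ is a nonnegative integer. This gives $8A(n_1n_2+1)/\tau\le 16A^2(n_1n_2+1)/\tau$ pivots.

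A minor caveat on the time bound: your own accounting charges Algorithm~\ref{alg:optimizing-union} $\mathcal{O}((n_1+n_2)^2)$ per call. That is $\mathcal{O}(n_1n_2)$ only when $n_1$ and $n_2$ are comparable, and the paper's running-time claim rests on the same tacit assumption.
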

\begin{proof}
	The proof follows from Theorem~\ref{th:vareps-sol}
	with $\varepsilon = 1/(n_1n_2+1)$. It then suffices to apply 
	Lemma~\ref{lem:bound-l1-luot-l2} to bound $\lVert \tpplan^*\rVert_1$ 
	and Lemma~\ref{lem:termination-cond-l2} to conclude about 
	optimality.
\end{proof}

\begin{remark}
The result of Corollary~\ref{co:termination-uot-l2} is {\color{black} comparable}
to the complexity $\widetilde{\mathcal{O}}(n_1n_2(n_1+n_2)^3)$ 
recently derived by Vegh~\cite{vegh2016} for minimum cost flow with 
quadratic costs. However, the latter result is strongly polynomial, 
while the former is only pseudo-polynomial.
\end{remark}

To end this section, we provide another result tailored to the 
SUOT-$L^2$ case~\eqref{eq:srot-l2}. Since $\tau^1=0$, the result of 
Section~\ref{subsec:strcvx} cannot be applied directly (in particular, 
the inequality \eqref{eq:min-i0j0} no longer holds). Nevertheless, one can 
follow a similar reasoning than in the proof 
of Lemma~\ref{lem:termination-cond-l2}. It consists in showing that dual variables 
of Algorithm~\ref{alg:adapted-ns} live on grid that quantizes possible 
values for the objective $\mathcal{E}$. Using this grid, we can count the 
number of steps required to reach a certain dual feasibility precision, and 
then certify optimality.

\begin{proposition}
\label{co:termination-suot-l2}
    Consider problem~\eqref{eq:srot-l2} under 
    Assumption~\ref{assum:strong-convex} for $\psi_2$ and 
    Assumption \ref{assum:integral-data} (except that $\tau^1=0$). Then, 
    Algorithm~\ref{alg:adapted-ns} initialized with $\tpplan_0$ 
    solves the SUOT-$L^2$ problem \eqref{eq:srot-l2} in
    at most $2\En(\tpplan_0)n_1^2n_2^2/\tau$ {\color{black} pivots} {\color{black} independently of the pivot rule}.
\end{proposition}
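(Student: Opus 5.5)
The plan is to show that the objective $\En$ decreases by at least a fixed quantum $\tau/(2n_1^2n_2^2)$ at every pivot. This immediately caps the number of pivots by $2\En(\tpplan_0)n_1^2n_2^2/\tau$, since $\En\ge 0$: both $\costmat$ and $\tpplan$ are nonnegative and the relaxation term is a square. As in Lemma~\ref{lem:termination-cond-l2}, we normalize $\tau=1/q$ with $q$ an integer, possibly after rescaling $\costmat$, and we use $\ww^{\ttwo}=\oone$.

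\textbf{Step 1: the iterates live on a grid.} Consider any triplet produced by Algorithm~\ref{alg:adapted-ns}, i.e.\ one satisfying \eqref{eq:compuot} and \eqref{eq:dualfeasuot} on a forest. On each tree $\tree_\ell$ of the support, complementarity fixes $(\ff,\gg)$ up to the shift $(\widetilde\ff-\gamma_\ell,\widetilde\gg+\gamma_\ell)$, where $(\widetilde\ff,\widetilde\gg)$ are integers. Mass balance on $\tree_\ell$ reads
\[
\sum_{i\in\leftt[\tree_\ell]}\aalpha^{\oone}_i=\sum_{j\in\rightt[\tree_\ell]}\bigl(\aalpha^{\ttwo}_j-\tau\gg_j\bigr),
\]
because $\tau^1=0$ fixes the source marginals. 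This determines $\gamma_\ell$ linearly, with denominator $\tau\,\#\rightt[\tree_\ell]$. Hence $\tau\gg_j$, and therefore the target marginals $\tpplan^\top\indicm_{n_1}$, lie in $\tfrac{1}{\#\rightt[\tree_\ell]}\ZZ$. Propagating the flow through the tree with Proposition~\ref{prop:comp-flow} then gives every entry of $\tpplan$ in $\tfrac{1}{n_2}\ZZ$, at least after tree-wise adjustments.

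\textbf{Step 2: the energy decrease is quantized.} We write $\En=\tpplan\!:\!\costmat+\frac{1}{2\tau}\|\nnu-\aalpha^{\ttwo}\|^2$ with $\nnu-\aalpha^{\ttwo}=-\tau\gg$. Using the grid of Step 1, $\tau\En$ takes values in $\tfrac{1}{2m}\ZZ$ for a denominator $m\le n_1^2n_2^2$. The bound on $m$ comes from the lcm of the denominators across the trees, exactly as in the lcm step of Lemma~\ref{lem:termination-cond-l2}. In particular the cross terms $\gg\cdot\gg$ with mixed tree denominators are what force the square $(n_1n_2)^2$.

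\textbf{Step 3: every pivot strictly decreases the energy.} It remains to show that each pivot gives $\En(\tpplan)-\En(\widetilde\tpplan)>0$; this is the main obstacle. I would reuse the first inequality in the proof of Lemma~\ref{lem:energ-delta2}, keeping only the $\mu_2$ term, since $\psi_2$ is strongly convex with $\mu_2=1/\tau$. This gives a decrease of at least $\frac{1}{2\tau}\|\gg-\widetilde\gg\|^2$ on the modified tree. Algorithm~\ref{alg:optimizing-union} strictly reduces $\Delta$, and $\Delta$ is absorbed by $\ff_{i_0}$ and $\gg_{j_0}$. I would argue that $\gg$ must change somewhere: otherwise, with $\tau^1=0$, the flow on the tree would be unchanged while $\tpplan_{i_0,j_0}$ increases by $\delta>0$. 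The delicate case is a degenerate pivot with $\delta=0$. There I would fall back on the finiteness argument of the standard network simplex (lexicographic edge removal) or show that such pivots do not occur under the initialization $\tpplan_0=\diag{\aalpha^{\oone}}$.

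Combining strict decrease with the grid of Step 2, each pivot decreases $\En$ by at least $\tau/(2n_1^2n_2^2)$. Summing over pivots gives the claimed bound, and the argument never uses the choice of pivot.
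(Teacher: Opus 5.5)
\textbf{Step 2 is where your argument breaks.} The lcm step in the proof of Lemma~\ref{lem:termination-cond-l2} only ever combines \emph{two} trees, namely those containing the endpoints of a single edge. That is why its denominator is bounded by a product of two tree sizes.

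The energy behaves differently. Using complementarity, $\En=\langle\ff,\aalpha^{\oone}\rangle+\langle\gg,\aalpha^{\ttwo}\rangle-\frac{\tau}{2}\|\gg\|^2$. With $\tau=1/q$ and the per-tree mass balance, each component $\tree_\ell$ contributes a term on the grid $\frac{\tau}{2\#\rightt[\tree_\ell]}\ZZ$. There are no cross terms $\gg_j\gg_{j'}$ between trees, so no ``mixed denominators'' arise. Hence $\En$ lies on $\frac{\tau}{2L}\ZZ$ with $L=\mathrm{lcm}_\ell\,\#\rightt[\tree_\ell]$ taken over \emph{all} components of the forest. Nothing in your argument bounds $L$ by $n_1^2n_2^2$. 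With component sizes summing to at most $n_2$ (for instance, target counts equal to distinct primes), $L$ can be superpolynomial in $n_2$, of order $e^{(1+o(1))\sqrt{n_2\ln n_2}}$.

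Passing to per-pivot differences does not help. A single call to Algorithm~\ref{alg:optimizing-union} may prune many subtrees, and each becomes a new component with its own denominator. So Steps 2--3 as written only give a superpolynomially small decrease per pivot, not $\tau/(2n_1^2n_2^2)$.

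\textbf{The fix is to quantize one dual coordinate rather than the energy; this is the paper's proof.} Your Step~3 already gives
\[
\En(\tpplan)-\En(\widetilde\tpplan)\ge\tfrac{\tau}{2}\|\gg-\widetilde\gg\|^2\ge\tfrac{\tau}{2}(\gg_{j_0}-\widetilde\gg_{j_0})^2 .
\]
The constant is $\tau/2$, not $1/(2\tau)$, since $\nnu=\aalpha^{\ttwo}-\tau\gg$.

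Your Step~1, after normalizing $\tau=1/q$, shows $\gg_{j_0}\in\tfrac1a\ZZ$ and $\widetilde\gg_{j_0}\in\tfrac1b\ZZ$. Here $a,b$ are the numbers of target nodes of the trees containing $j_0$ before and after the pivot. Note that it is $\gg_j$, not $\tau\gg_j$, that lies on this grid; you are off by a factor $q$ there.

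Only two trees enter, so $\gg_{j_0}\neq\widetilde\gg_{j_0}$ forces $|\gg_{j_0}-\widetilde\gg_{j_0}|\ge 1/\mathrm{lcm}(a,b)$. This gives a polynomial lower bound on the energy decrease per pivot without any global grid; the paper obtains $\tau/(2n_1^2n_2^2)$ this way. The same works for any coordinate $\gg_j$ that moves, since $j$ also lies in exactly one tree before and one after.

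What remains is to show that every pivot with $\Delta>0$ actually moves $\gg_{j_0}$, or some $\gg_j$; this is the claim the paper relies on. Your fallback to the finiteness argument of the classical network simplex would not suffice here, because the statement counts pivots and finiteness gives no count.
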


\begin{proof}
	Using the same notations as in the proof of 
	Lemma~\ref{lem:termination-cond-l2}, we obtain
    \begin{equation*}
        \En(\tpplan)-\En(\widetilde\tpplan)
        \geq
        \frac{\tau}{2} \left(  \gg_{j_0}-\widetilde\gg_{j_0}\right)^2
    \end{equation*}
    at every iteration going from $\tpplan$ to $\widetilde\tpplan$. 
    Assuming that the dual violation is $\Delta>0$ at that iteration, 
    we have $\gg_{j_0}\neq\widetilde\gg_{j_0}$, 
    $\gg_{j_0}\in(\#\rightt[\tree])^{-1}\ZZ$ and 
    $\widetilde\gg_{j_0}\in(\#\rightt[\widetilde\tree])^{-1}\ZZ$ where 
    $\tree$ and $\widetilde\tree$ are the trees such that 
    $j_0\in\rightt[\tree]$ before calling Algorithm \ref{alg:optimizing-union})
    and $j_0\in\rightt[\widetilde\tree]$ after the call. By the same reasoning 
    as in the proof of Lemma~\ref{lem:termination-cond-l2}, we can show that 
    $\lvert \gg_{j_0}-\widetilde\gg_{j_0}\rvert\geq 1/(n_1n_2)$ and thus
    \begin{equation*}
        \En(\tpplan)-\En(\widetilde\tpplan)\geq 
        \frac{\tau}{2n_1^2n_2^2}.
    \end{equation*}
    The result then follows by considering multiple iterations. 
\end{proof}

\section{Numerical experiments}
\label{sec:num-exp}

In this section, we investigate the practical performance of our framework. 
We implemented Algorithm~\ref{alg:adapted-ns} in \python and converted this 
code into \cpp using Claude Opus 5. This code was checked by ourselves and extensive numerical checks are provided in the repository. Rather than building a 
high-performance implementation, our aim was to estimate the possible speedup 
that one could achieve by switching from \python to \cpp. All the
experiments have been conducted on a MacBook with chipset M4 and 24Gb RAM. 
Our code is made publicly available in the GitHub repository 
{\ttfamily AdNS-UOT}\footnote{\url{https://github.com/gagazzotti/AdNS-UOT}}.

We first present a comparison between several pivot rules in 
Section~\ref{subsec:numpivots}. We then perform a comparison with other 
exact variants for special cases of unbalanced optimal transport in 
Section~\ref{subsec:numuot}. Finally, we investigate the interest of our 
method for computing barycenters in Section~\ref{subsec:barycenters}.

\subsection{Pivot rules for adapted network simplex}
\label{subsec:numpivots}

Selecting a pivot edge is a crucial ingredient for practical performance of 
the simplex method, and numerous techniques have been proposed in the 
literature~\cite[Section 11]{nf1993orlin}. We consider the four rules 
described below.

\begin{itemize}
    \item \textit{Dantzig's pivot rule} selects the edge corresponding to the 
    largest dual violation, \textit{i.e.}, 
    $\argmax_{(i_0,j_0)\in V\times V'} \ff_{i_0}+\gg_{j_0}-\costmat_{i_0,j_0}$. 
    This method generally reduces the number of overall iterates but requires 
    to examine all the $n_1n_2$ values to determine the largest 
    violation~\cite[sec. 11]{nf1993orlin}, which is expensive for large 
    instances. Theorem~\ref{th:vareps-sol} was established using this rule.
    \item The \textit{candidate list} pivot rule collects a list of $k$ 
    violating edges and chooses those in order for the next $k$ iterations 
    provided they remain violating edges (otherwise a new list of $k$ violating 
    edges is selected). This procedure does not require examining the $n_1 n_2$ 
    dual constraints at every iteration but tends to produce more iterations 
    overall. In our implementation, we choose the top-$k$ dual violations as 
    a candidate list. The result of Theorem~\ref{th:vareps-sol} does not apply 
    to that rule.
    \item The \textit{block-search} rule (also known as \textit{first eligible 
    arc} rule) consists in finding the maximal dual violation on a subset of 
    edges called a block. The set of edges is divided into blocks, so that a 
    violating edge is sought blockwise. If a violating edge is found in a 
    block, this block is considered first at the next iteration. This 
    procedure corresponds to that implemented in the \cpp LEMON 
    library~\cite{lemon}. The result of Theorem~\ref{th:vareps-sol} does not 
    apply to that rule.
    \item The \textit{scaling method} seeks an edge corresponding to a dual 
    violation of a least $\Delta>0$ as long as such an edge exists. The 
    procedure then iteratively shrinks $\Delta$ by a factor of $2$ and repeats 
    the process until reaching near machine precision. This scaling approach 
    can be combined with the block-search rule, which is the choice we 
    adopted in our experiments. Because the scaling is closely related to 
    the argument used for proving Theorem~\ref{th:vareps-sol}, the result 
    of that theorem applies to the scaling method.
\end{itemize}  

To compare the four pivot rules above, we consider randomly generated 
instances of UOT-$L^2$ with $n_1=n_2=n$ and $\tau_1=\tau_2=1e-3$. In those 
random instances, the coefficients of $\costmat$, $\aalpha^{\oone}$ and 
$\aalpha^{\ttwo}$ are drawn i.i.d. uniformly at random in $[0,1]$.
Figure \ref{fig:pivot-rules} shows the average computational time obtained 
over $5$ randomly generated instances for each dimension. As expected, 
Dantzig's pivot rule performs worst for large values of $n$. The scaling 
method outperforms the three other rules by at least a factor of 2 on both 
\python and \cpp implementations, while the computational time grows at 
similar rates for the candidate list, block-search and scaling variants.
Overall, this experiment motivates our choice of the scaling method as our 
pivot rule in subsequent tests. Note that the \cpp implementation is about ten 
times faster than the \python one, which illustrates the potential of a 
high-performance implementation of our algorithm.

\begin{figure}
    \centering
    \includegraphics[scale=0.3]{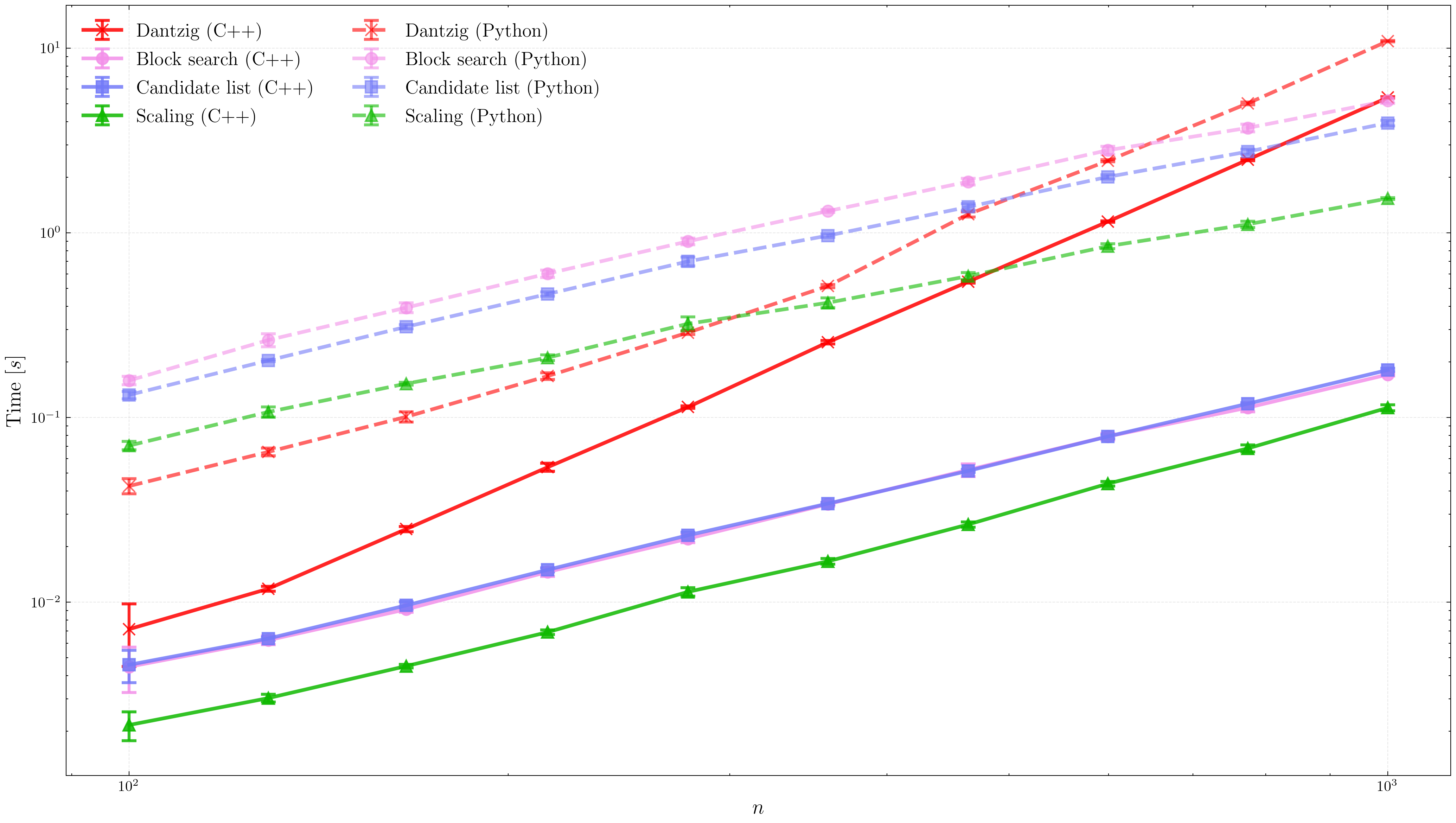}
    \caption{Comparison of the different pivot rules (\python and \cpp 
    implementations)}
    \label{fig:pivot-rules}
\end{figure}

\subsection{Comparisons on UOT and SUOT problems}
\label{subsec:numuot}

We now present a comparison with other algorithms for solving the exact 
problem~\eqref{eq:uot}, i.e. we discard methods that add an entropic 
regularization term to the objective because those do not enforce sparsity of 
the transport plan. For the UOT-$L^2$ problem~\eqref{eq:rot-l2}, we compare 
Algorithm~\ref{alg:adapted-ns} with \celer~\cite{massias2018celer} (applied to 
the LASSO reformulation of the problem) and the general-purpose LP solver 
\mosek~\cite{mosek} obtained from the {\ttfamily cvxpy} 
interface~\cite{diamond2016cvxpy}. We also included the regularization path 
method of Chapel et al~\cite{chapel2021unbalanced}, which tracks the evolution 
of the support through a change of regularization. When we consider the 
semi-unbalanced version of~\eqref{eq:rot-l2}, we only compare with \mosek and 
regularization path, as \celer is no longer applicable. Finally, for both 
unbalanced and semi-unbalanced versions of optimal transport with KL 
relaxation~\eqref{eq:rot-kl}, denoted by UOT-KL and SUOT-KL, respectively, the 
regularization path is no longer applicable. Therefore, we only compare with 
\mosek launched so as to reach the same precision as that obtained by 
Algorithm~\ref{alg:adapted-ns}.

\begin{table}
    \centering
    \scriptsize
    \begin{tabular}{p{1.6cm} p{1.8cm} p{2.2cm} p{2.5cm} p{2.2cm} p{2.5cm}}
    \toprule[2pt]
    Problem & Solver & \multicolumn{2}{c}{$n=500$ and $\tau=$1e-1} & \multicolumn{2}{c}{$n=500$ and $\tau=$1e-3} \\
    \midrule
     &  & \centering time [$s$] &  $|\mathcal{E}(\mathbf P)-\mathcal{E}(\mathbf P^*)|$ & \centering time [$s$] &  $|\mathcal{E}(\mathbf P)-\mathcal{E}(\mathbf P^*)|$ \\
    \midrule
    SUOT-KL & \mosek & 1.1e+0$\pm$2.7e-2 & 6.0e-8$\pm$1.3e-8 & 1.0e+1$\pm$8.2e-1 & 2.3e-5$\pm$9.1e-7 \\
     & AdNS-\python & 3.7e+0$\pm$1.1e-1 & - & 1.2e+1$\pm$9.6e-2 & - \\
     & AdNS-\cpp & 1.3e-1$\pm$3.6e-3 & - & 5.0e-1$\pm$6.2e-3 & - \\
    \midrule
    SUOT-L2 & Reg. Path & 6.5e+0$\pm$1.4e-1 & - & 1.0e+1$\pm$1.4e-1 & - \\
     & \mosek & 9.9e-1$\pm$3.6e-3 & 7.3e-7$\pm$1.6e-7 & 9.1e-1$\pm$6.0e-3 & 8.7e-5$\pm$1.1e-5 \\
     & AdNS-\python & 9.5e-2$\pm$1.7e-3 & - & 8.3e-1$\pm$1.9e-2 & - \\
     & AdNS-\cpp & 1.0e-2$\pm$5.1e-4 & - & 3.8e-2$\pm$5.9e-4 & - \\
    \midrule
    UOT-KL & \mosek & 1.2e+0$\pm$1.2e-2 & 2.1e-7$\pm$2.0e-8 & 7.4e+0$\pm$1.1e+0 & 3.3e-5$\pm$2.2e-6 \\
     & AdNS-\python & 3.1e+0$\pm$2.9e-2 & - & 1.6e+1$\pm$1.8e-1 & - \\
     & AdNS-\cpp & 1.1e-1$\pm$7.1e-4 & - & 5.4e-1$\pm$5.6e-3 & - \\
    \midrule
    UOT-L2 & celer & 8.9e-2$\pm$3.3e-3 & 2.0e-10$\pm$7.2e-11 & 4.8e+0$\pm$2.6e-1 & 2.0e-7$\pm$8.4e-8 \\
     & Reg. Path & 2.7e+0$\pm$1.1e-1 & - & 5.2e+0$\pm$1.1e-1 & - \\
     & \mosek & 1.1e+0$\pm$3.9e-2 & 9.0e-6$\pm$2.3e-6 & 1.0e+0$\pm$6.3e-3 & 2.0e-5$\pm$4.1e-6 \\
     & AdNS-\python & 6.6e-2$\pm$4.8e-4 & - & 6.2e-1$\pm$1.2e-2 & - \\
     & AdNS-\cpp & 7.8e-3$\pm$1.4e-4 & - & 3.0e-2$\pm$1.9e-4 & - \\
    \end{tabular}

    \vspace{0em}

    \begin{tabular}{p{1.6cm} p{1.8cm} p{2.2cm} p{2.5cm} p{2.2cm} p{2.5cm}}
    \toprule[2pt]
    Problem & Solver & \multicolumn{2}{c}{$n=1000$ and $\tau=$1e-1} & \multicolumn{2}{c}{$n=1000$ and $\tau=$1e-3} \\
    \midrule
    SUOT-KL & \mosek & 5.5e+0$\pm$2.3e-1 & 7.5e-8$\pm$6.1e-9 & 4.0e+1$\pm$3.9e+0 & 2.0e-5$\pm$3.0e-6 \\
     & AdNS-\python & 1.5e+1$\pm$2.0e-1 & - & 5.8e+1$\pm$2.2e-1 & - \\
     & AdNS-\cpp & 5.5e-1$\pm$5.5e-3 & - & 2.4e+0$\pm$2.0e-2 & - \\
    \midrule
    SUOT-L2 & Reg. Path & 5.4e+1$\pm$3.4e-1 & - & 8.5e+1$\pm$1.5e+0 & - \\
     & \mosek & 4.4e+0$\pm$1.6e-2 & 3.9e-7$\pm$1.5e-7 & 4.4e+0$\pm$1.8e-2 & 1.0e-4$\pm$6.9e-6 \\
     & AdNS-\python & 2.6e-1$\pm$2.8e-3 & - & 2.0e+0$\pm$2.5e-2 & - \\
     & AdNS-\cpp & 4.5e-2$\pm$6.1e-4 & - & 1.4e-1$\pm$2.1e-3 & - \\
    \midrule
    UOT-KL & \mosek & 6.0e+0$\pm$7.7e-2 & 1.9e-7$\pm$1.4e-8 & 2.9e+1$\pm$4.2e+0 & 4.4e-5$\pm$2.4e-6 \\
     & AdNS-\python & 1.1e+1$\pm$1.2e-1 & - & 7.3e+1$\pm$6.8e-1 & - \\
     & AdNS-\cpp & 4.5e-1$\pm$6.7e-3 & - & 2.5e+0$\pm$1.7e-2 & - \\
    \midrule
    UOT-L2 & \celer & 3.8e-1$\pm$5.9e-3 & 1.5e-10$\pm$5.0e-11 & 2.0e+1$\pm$2.1e+0 & 1.3e-7$\pm$8.7e-8 \\
     & Reg. Path & 2.2e+1$\pm$4.2e-1 & - & 4.4e+1$\pm$8.5e-1 & - \\
     & \mosek & 4.9e+0$\pm$3.9e-2 & 5.7e-5$\pm$2.7e-5 & 4.5e+0$\pm$8.6e-2 & 1.0e-3$\pm$2.2e-4 \\
     & AdNS-\python & 2.0e-1$\pm$7.4e-3 & - & 1.5e+0$\pm$2.6e-2 & - \\
     & AdNS-\cpp & 3.4e-2$\pm$2.2e-4 & - & 1.1e-1$\pm$1.8e-3 & - \\
    \bottomrule
    \end{tabular}
    \caption{Comparison of Algorithm~\ref{alg:adapted-ns} (AdNS), regularization path (Reg. Path), 
    \mosek and \celer on four unbalanced formulations.}
    \label{tab:results}
\end{table}

Table~\ref{tab:results} presents the comparison for several values of 
$n=n_1=n_2$ and $\tau$, where $\tau=\tau_1=\tau_2$ for unbalanced 
formulations and $\tau=\tau_2>\tau_1=0$ in semi-unbalanced 
formulations. For each pair $(\tau,n)$, we randomly generated 
$5$ instances per formulation following the same procedure as in 
Section~\ref{subsec:numpivots} and we report statistics over those 
problems, both in terms of computational time and gap to optimality. 
Our \cpp implementation is about ten times faster than 
\mosek and provides an exact solution, unlike that of \mosek. The \python 
implementation of Algorithm~\ref{alg:adapted-ns} is about 10 to 100 times 
faster than regularization path for both formulations using $L^2$ relaxation.
Overall, our experiments advocate for using our method on $L^2$ and KL 
relaxed formulations of optimal transport. We provide further motivating 
experiments below.

\paragraph{Solution sparsity}
Our main motivation for developing Algorithm~\ref{alg:adapted-ns} was to 
maintain at most $n_1+n_2-1$ nonzero elements of the primal variable at each 
iteration, as in network simplex implementations. Even though sparsity is also 
guaranteed in the regularization path algorithm \cite{chapel2021unbalanced}, 
this method is restricted to UOT-$L^2$ and SUOT-$L^2$ problems. 
Figure~\ref{fig:sparsity}, displays the sparsity of the solutions obtained by 
the regularization path~\cite{chapel2021unbalanced} and \mosek compared to 
that of Algorithm~\ref{alg:adapted-ns}. We also compare with two Sinkhorn 
algorithms~\cite{GPeyre_MCuturi_2019} with different entropic 
regularization parameters. We note that both regularization path updates and 
\mosek give denser approximations of the solution than our algorithm. For 
Sinkhorn variants, the sparsity increases as the entropic regularization 
parameter decreases, as expected.

\begin{figure}
    \centering
    \includegraphics[scale=0.5]{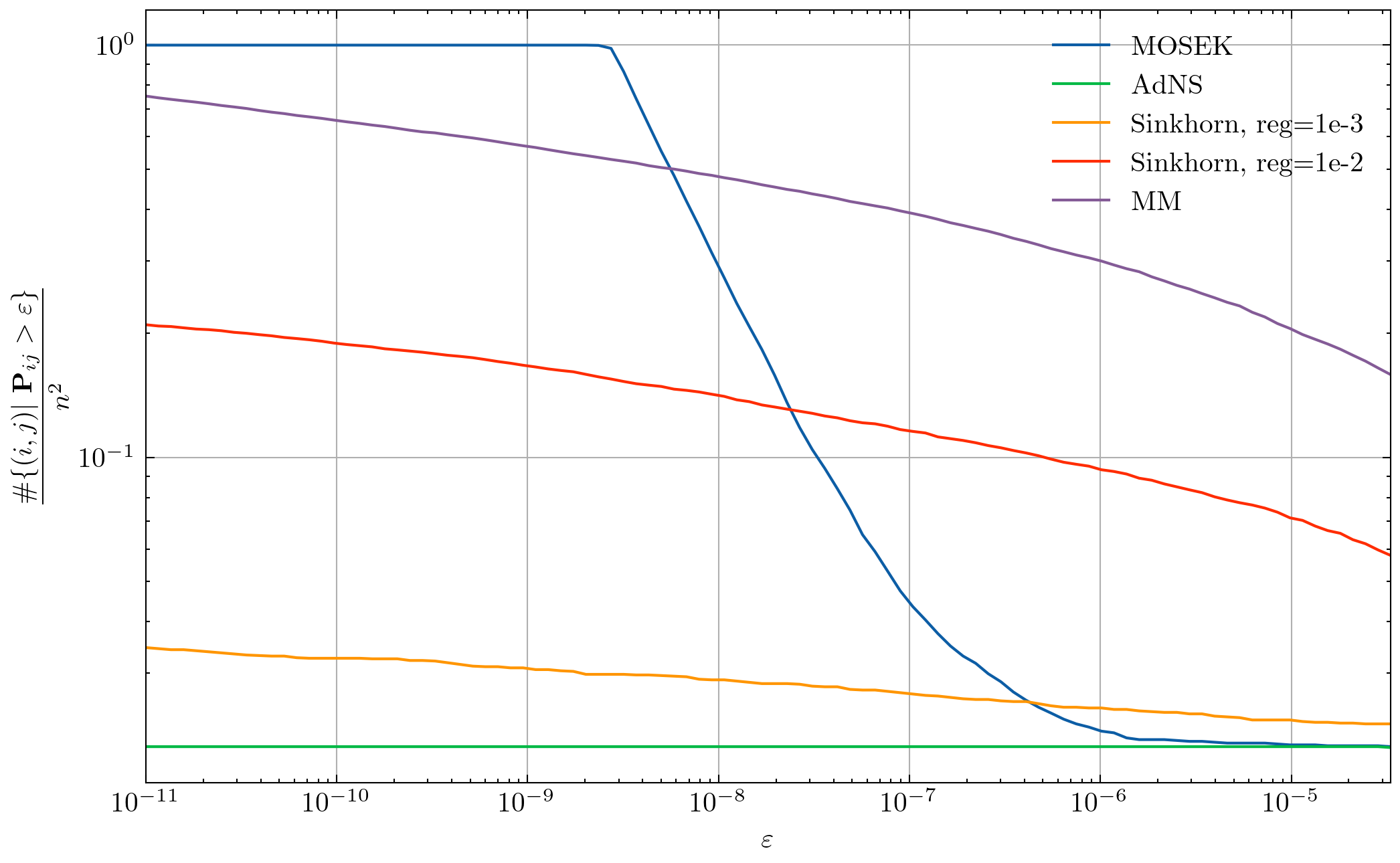}
    \caption{Sparsity of the solutions obtained with various algorithms with 
    $n=100$ and $\tau=\text{1e-3}$. For any $\varepsilon$ on the $x$-axis, the 
    $y$-axis shows the number of coordinates of the solution greater than 
    $\varepsilon$. The largest value of $\varepsilon$ corresponds to the 
    minimum nonzero coefficient of the solution computed by 
    Algorithm~\ref{alg:adapted-ns}. For Sinkhorn methods, \emph{reg} 
    denotes the entropic regularization parameter.}
    \label{fig:sparsity}
\end{figure}

\paragraph{Comparison with network simplex} 
We also compare Algorithm~\ref{alg:adapted-ns} on problem~\eqref{eq:rot-l2} 
on a random instance (generated as in Section~\ref{subsec:numpivots})
with $n=25~000$ and several values of $\tau=\tau^1=\tau^2$ to the standard 
network simplex method applied to the corresponding optimal transport 
formulation~\eqref{eq:ot}. For the latter, we used the implementation 
from the {\ttfamily POT} library~\cite{pot}. In Figure~\ref{fig:uot-vs-ot}, 
we plot the computational times of both methods. For $\tau \ge \text{1e-5}$, 
we observe that Algorithm~\ref{alg:adapted-ns} is faster than the network 
simplex to reach a precision of $10^{-5}$ (in $L^1$ norm) on the marginals. 
These results encourage using unbalanced formulations as proxies for optimal 
transport to benefit from fast solving times.

\begin{figure}
    \centering
    \includegraphics[scale=0.5]{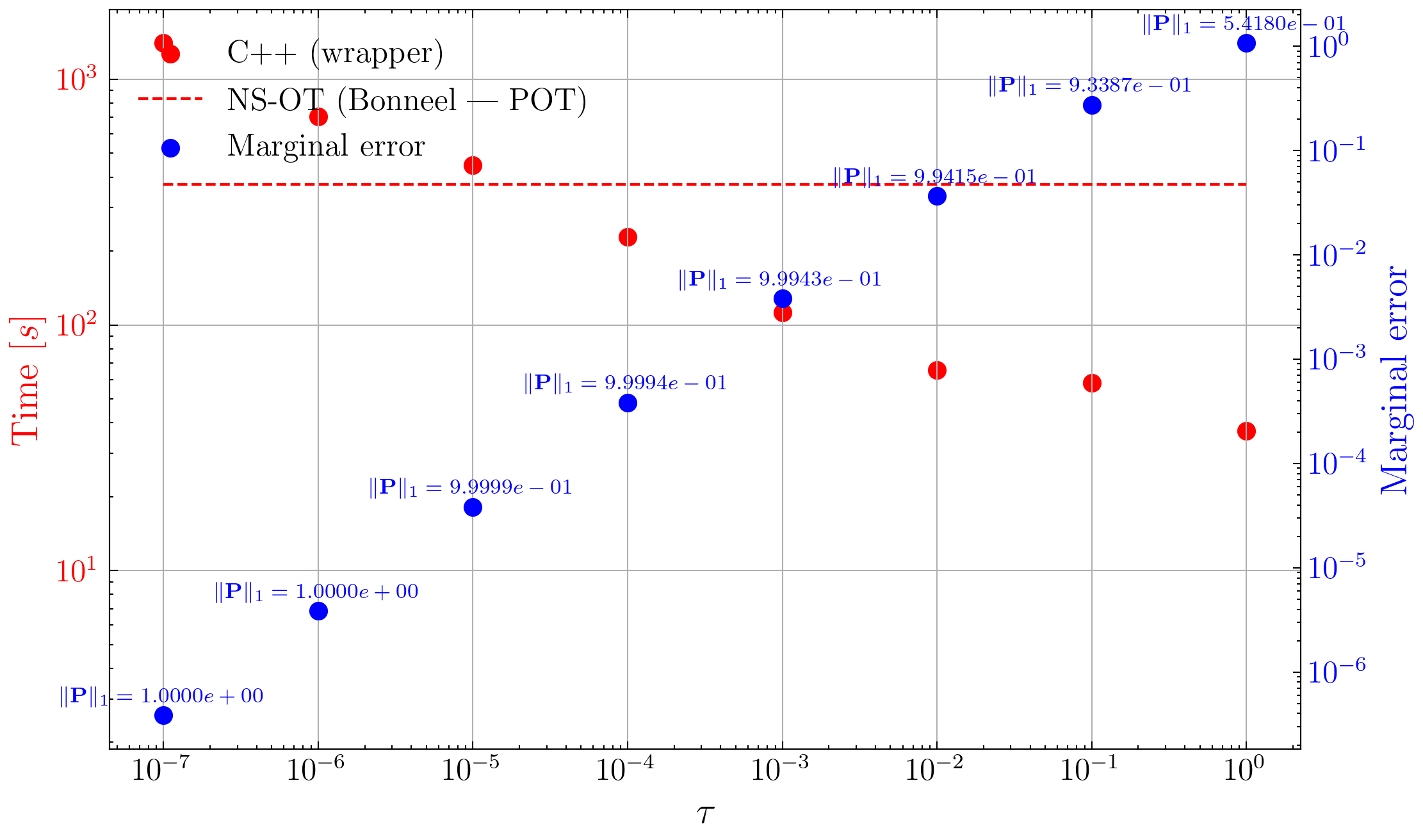}
    \caption{Computational time of Algorithm~\ref{alg:adapted-ns} for 
    problem~\eqref{eq:rot-l2} with several values of $\tau$ (circles) 
    and Network simplex for~\eqref{eq:uot} (dashed line).}
    \label{fig:uot-vs-ot}
\end{figure}

\paragraph{Extension to other relaxations}
As a final experiment, we apply our algorithmic framework to another 
choice of relaxation function based on $L^p$ norms with even 
$p$~\cite{lee2019parallel}. Consider~\eqref{eq:rot} with 
$n_1=n_2=n$ and
$\psi_k(\bm x):=(p\tau)^{-1}\|\bm x-\aalpha^{\bm k}\|_p^p$ for 
$k \in \{1,2\}$, where $\aalpha^{\oone}$ and $\aalpha^{\ttwo}$ 
are two measures that need not have identical mass. We can define a 
feasible initialization by 
\[
	\tpplan=\mathbf{0}_{n\times n}, 
	\quad
	\ff=-\tau^{-1}\left(-\aalpha^{\oone}\right)^{p-1}, 
	\quad
	\gg=-\tau^{-1}\left(-\aalpha^{\ttwo}\right)^{p-1},
\]
however the equations~\eqref{eq:delta-max}, \eqref{eq:delta-eff-tm} 
and \eqref{eq:delta-eff-tp} no longer admit closed-form 
solutions. We thus solve these 
nonlinear equations 
using Brent's method~\cite{brent1973algorithms} implemented 
in {\ttfamily Scipy} version 1.16.
We compare Algorithm~\ref{alg:adapted-ns} to the {\ttfamily clarabel} 
solver~\cite{Clarabel_2024} (available through {\ttfamily CVXPY}) and 
\mosek. On an instance with $p=4$, $n=500$, $\tau=$1e-4, and $\aalpha^{\oone}$, 
$\aalpha^{\ttwo}$ and $\costmat$ randomly generated as in 
Section~\ref{subsec:numpivots}, Algorithm~\ref{alg:adapted-ns} finds a 
solution in 0.16 seconds while the other solvers take 5.21 seconds and 1.01 
seconds, respectively.

\subsection{Applications to computing barycenters}
\label{subsec:barycenters}

\begin{figure}[htb]
  \centering
  \begin{subfigure}{0.8\textwidth}
    \centering
    \includegraphics[width=\linewidth]{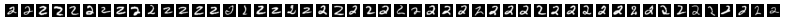}
  \end{subfigure}
  \begin{subfigure}{0.8\textwidth}
    \centering
    \includegraphics[width=\linewidth]{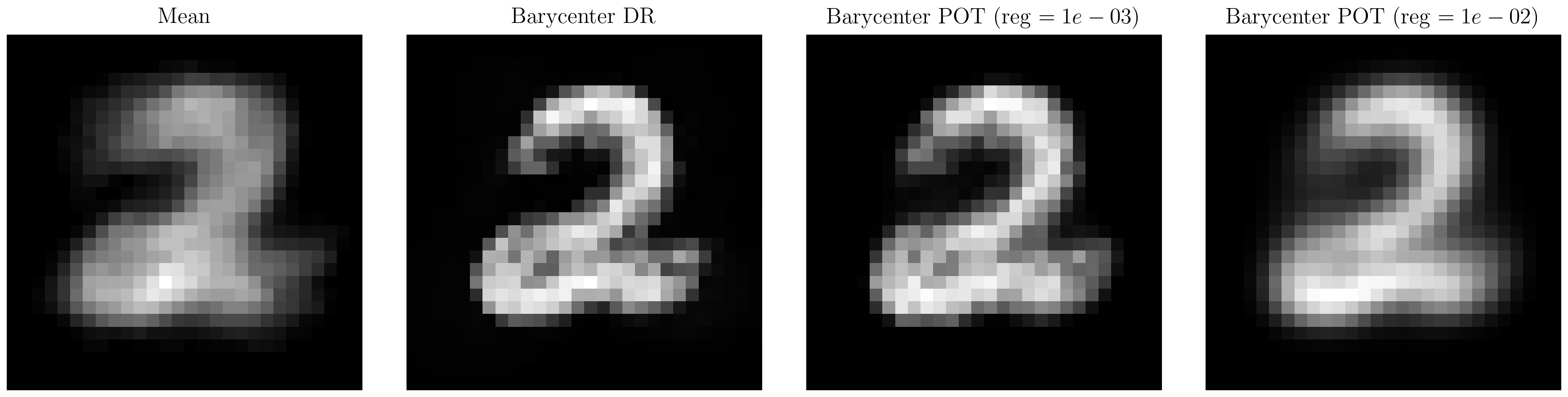}
  \end{subfigure}
  \caption{Computation of Wasserstein barycenter of 50 MNIST digits using 
  Algorithm~\ref{alg:adapted-ns} to compute proximal steps 
  in~\eqref{eq:driterates} with $\tau=$1e-1.}
  \label{fig:barycenter}
\end{figure}

Consider $m$ discrete probability distributions 
$(\mmu_k)_{k\in\IntInterval{1}{m}}$ of size $n$, defined on the same discrete 
support $(\bm{x}_i)_{i\in\IntInterval{1}{n}}$ and their associated cost matrix 
$\costmat = 
\left(\|\bm{x}_i-\bm{x}_j\|_2^2\right)_{(i,j)\in\IntInterval{1}{n}^2}$. The 
discrete Wasserstein barycenter problem~\cite{carlier2011wb} 
consists in solving
\begin{equation}
\label{eq:w2barycenter}
    \min_{\nnu\in\R^n}
    \sum_{k=1}^m \omega_k W_2^2(\mmu_i,\nnu),
\end{equation}
where $\mathbf{\omega}$ is a vector of weights, and $W_2^2(\mmu,\nnu)$ 
corresponds to the optimal value of the optimal transport 
problem~\eqref{eq:ot}. We explore several ways of computing this quantity as 
well as an alternate formulation.

\paragraph{Computing a barycenter using Douglas-Rachford and SUOT}
We consider an unweighted version of~\eqref{eq:w2barycenter} where all 
weights are equal. Problem~\eqref{eq:w2barycenter} can then be reformulated as
\begin{equation}
\label{eq:w2barycenteradmm}
	\min_{(\nnu_1,...,\nnu_m)\in\R^{n\times m}}
        \sum_{i=1}^m W_2^2(\mmu_i,\nnu_i)
        + \mathbbm{1}_{\mathcal{C}_m}(\nnu_1,\dots,\nnu_m).
\end{equation}
where $\mathcal{C}_m = 
\{(\bm x_1,\dots,\bm x_m)\in\R^{n\times m}|\bm x_1=\dots=\bm x_m\}$.
Problem~\eqref{eq:w2barycenteradmm} cant hen  be tackled using alternating 
methods, such as the Douglas-Rachford (DR) 
splitting~\cite{AChambolle_JPContreras_2022, LionsMercier}, leading to the 
following recursion:
\begin{equation}
\label{eq:driterates}
    \begin{cases}
        \displaystyle
        \bar{\bm{u}}^{(k+1)} 
        = 
        \frac{1}{m}\sum_{t=1}^m \bm{v}_t^{(k)}\\
        \forall t \in \IntInterval{1}{m},~
        \bm{p}_t^{(k+1)} 
        = 
        \mathrm{prox}_{\mathrm{OT}(\mmu_t,\cdot|\costmat)}^\tau
        \left(2\bar{\bm{u}}^{(k+1)} - \bm{v}_t^{(k)}\right)\\
        \forall t \in \IntInterval{1}{m},~
        \bm{v}_t^{(k+1)} 
        = 
        \bm{v}_t^{(k)}+\bm{p}_t^{(k+1)}-\bar{\bm u}^{(k+1)}.
    \end{cases}
\end{equation}
Although the method~\eqref{eq:driterates} converges for any positive
$\tau$~(\cite{LionsMercier,eckstein1992douglas}), the actual value of 
$\tau$ has a significant practical impact. We implement the proximal 
update of $\bm{p}_t^{(k+1)}$ by solving a semi-unbalanced optimal 
transport problem of the form~\eqref{eq:srot-l2}, and apply it to 
compute the barycenter of 50 images from the MNIST 
database~\cite{YLeCun_LBottou_YBengio_PHaffner_1998}. 
Figure~\ref{fig:barycenter} compares our approach with an iterative 
Bregman projection~\cite{benamou2015iterative} available within the 
POT library, that relies on entropic regularization. Our method yields 
a sharper image compared to the output of iterative Bregman projection 
for both values of regularization. Note that the iterative Bregman 
projection suffers from numerical issues for $\mathrm{reg}=10^{-4}$, 
and thus outputs a poor quality barycenter.

\begin{figure}[h]
    \centering
    \begin{subfigure}[b]{0.35\textwidth}
        \centering
        \includegraphics[width=\textwidth]{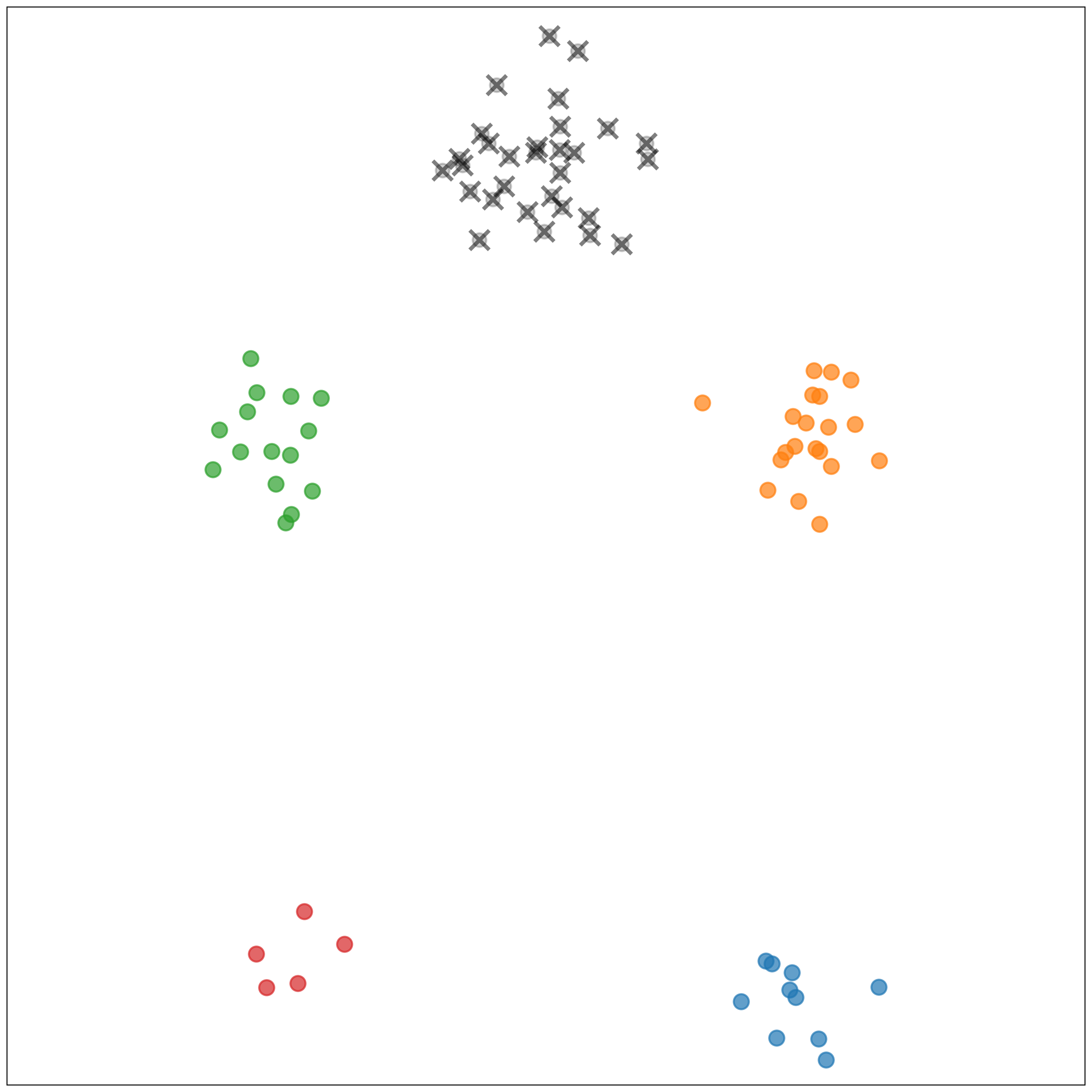}
        \caption{$\tau = 1$e-2}
    \end{subfigure}
    \begin{subfigure}[b]{0.35\textwidth}
        \centering
        \includegraphics[width=\textwidth]{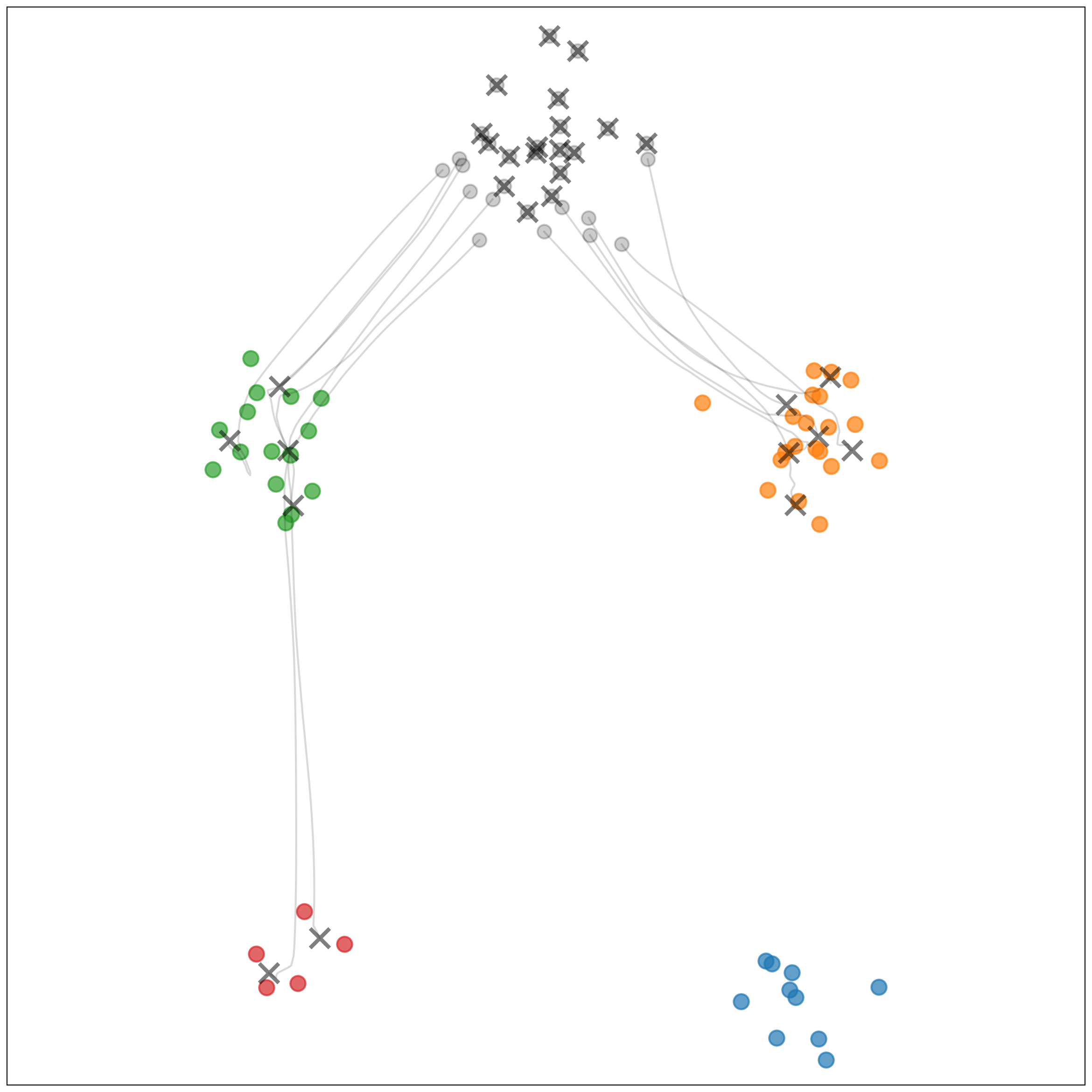}
        \caption{$\tau = 1$e-2.5}
    \end{subfigure}
    
    \vspace{0.2em}
    
    \begin{subfigure}[b]{0.35\textwidth}
        \centering
        \includegraphics[width=\textwidth]{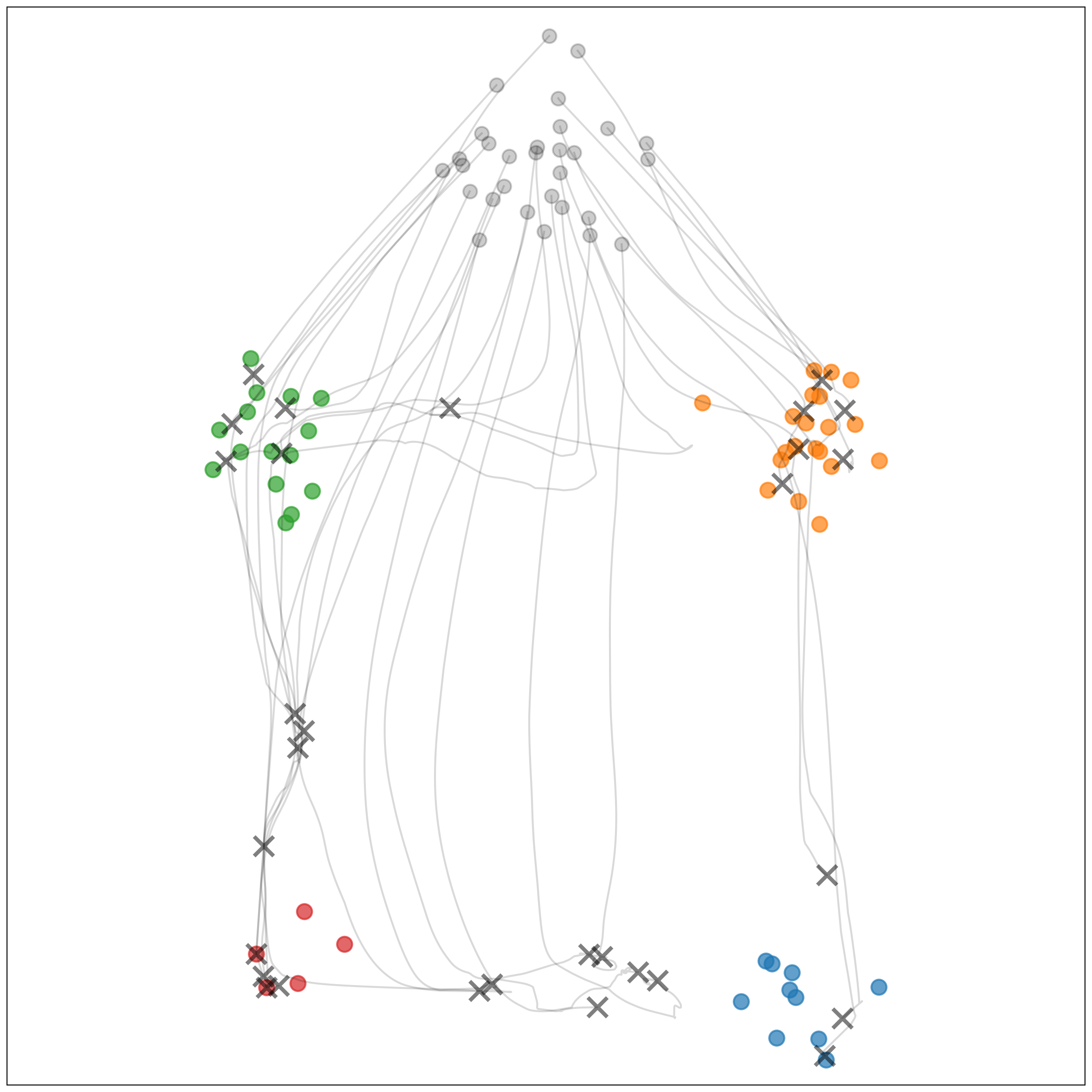}
        \caption{$\tau = 1$e-3}
    \end{subfigure}
    \begin{subfigure}[b]{0.35\textwidth}
        \centering
        \includegraphics[width=\textwidth]{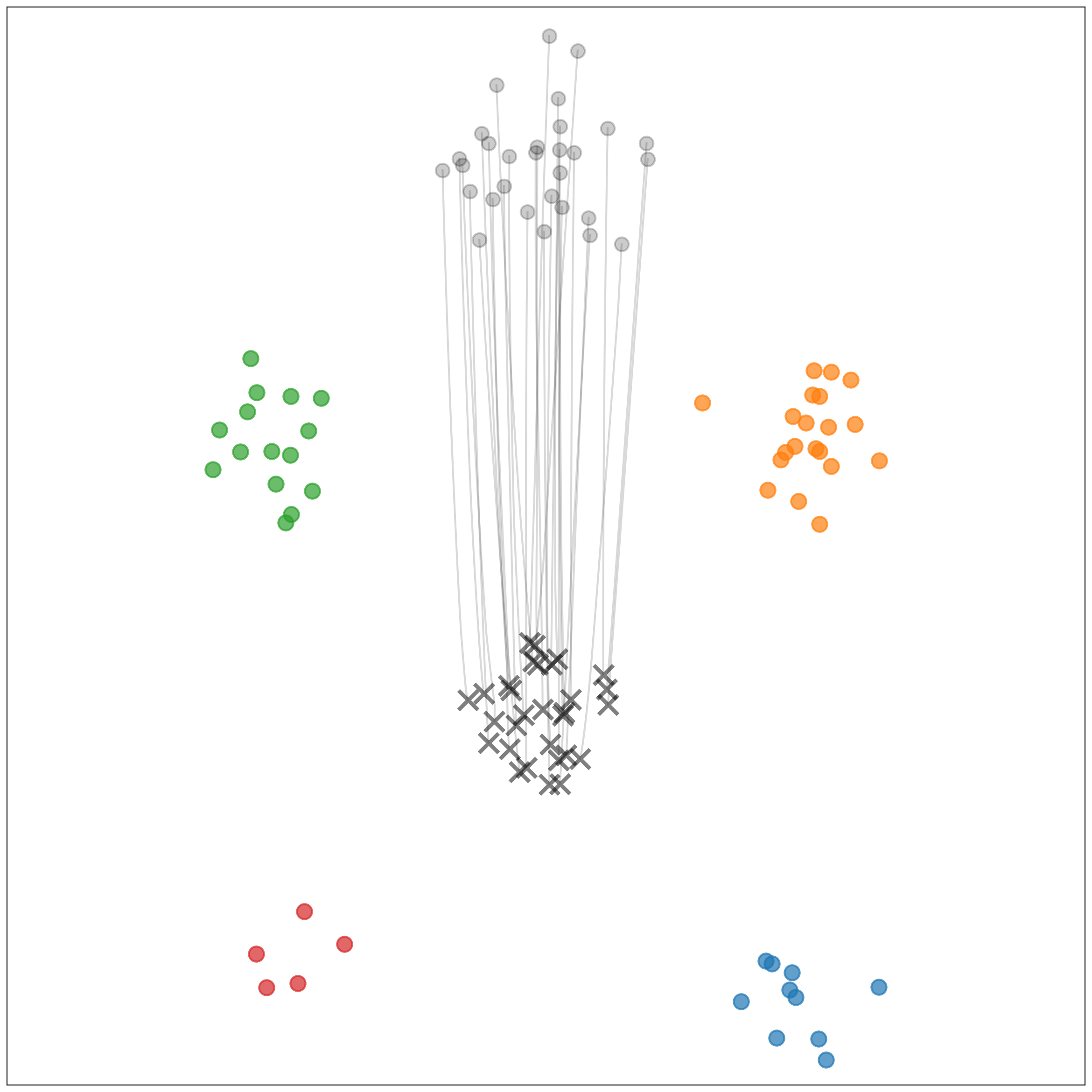}
        \caption{$\tau = 1$e-4}
    \end{subfigure}
    
    \caption{Gradient descent trajectories for problem~\eqref{eq:fsbarycenterpb} 
    (grey dots are initial points, crosses correspond to last iterates). 
    Each color corresponds to a different measure $\bm{\nu}_\ell$.}
    \label{fig:fs-barycenter}
\end{figure}
\paragraph{Free-support barycenter problem}
We now consider a variant of~\eqref{eq:w2barycenter} where all weights are 
equal, and the support of the barycenter is less constrained by that of the 
input measures. Let $\bm{\nu}_1,\dots,\bm{\nu}_m$ be $m$ discrete probability 
measures on $\R^d$ of the form $\bm{\nu}_\ell 
= \tfrac{1}{n_{\ell}}\sum_{k=1}^{n_\ell}\delta_{\bm{x}_k^{ \ell}}$ where 
$\bm{x}_k^{\ell} \in \R^d$ are discrete supports that differ for each 
measure. The free support barycenter problem (in its unweighted version) 
consists in finding an atomic distribution with $n$ Dirac masses by solving 
\begin{equation}
\label{eq:fsbarycenterpb}
    \min_{(\bm{x}_1,...,\bm{x}_n)\in(\R^{d})^n}
    \sum_{\ell=1}^m \text{UOT-}L^2
    \left(\bm{\nu}_\ell,\frac{1}{n}\sum_{k=1}^n\delta_{\bm{x}_k};
    \costmat_\ell,\tau\right),
\end{equation}
where $\text{UOT-}L^2(\aalpha^{\oone},\aalpha^{\ttwo};\costmat,\tau)$ is 
the optimal value of problem~\eqref{eq:rot-l2} with weight vectors 
$\ww^1=\ww^2=\mathbf{1}_n$ and $\tau_1=\tau_2=\tau$. The cost matrices 
$\{\costmat_{\ell}\}_{\ell=1,\dots,m}$ are defined by
$(\costmat_\ell)_{i,j} := \lVert\bm{x}_i^{\ell}-\bm{x}_j\rVert_2^2$. 
Problem~\eqref{eq:fsbarycenterpb} uses the output of unbalanced optimal 
transport as a proxy for the Wasserstein ($L^2$) distance, that would 
require to solve an optimal transport formulation instead.

Problem~\eqref{eq:fsbarycenterpb} can be solved using first-order algorithms 
since it is possible to differentiate through the unbalanced optimal 
transport solve. Indeed, for every $\ell=1,\dots,m$, we have
\begin{equation*}
    \partial_{\bm{x}_p} \text{UOT-}L^2
    \left(\bm{\nu}_\ell,\frac{1}{n}\sum_{k=1}^n\delta_{\bm{x}_k};
    \costmat_\ell,\tau\right) 
    =
    2\sum_{i=1}^{n_\ell}\tpplan_{i,p}^{\ell}
    (\bm{x}_p-\bm{x}_i^\ell)
\end{equation*}
where $\tpplan^\ell$ is a solution of the corresponding UOT-$L^2$ problem. 
Figure \ref{fig:fs-barycenter} displays the output of gradient descent 
applied to several instances of~\eqref{eq:fsbarycenterpb} (differing by 
$\tau$) for $10^5$ iterations using a stepsize of $0.2$ and a random 
Gaussian distribution as initial point. For $\tau=$1e-4, we obtain a result 
similar to what an optimal transport formulation would give, i.e. a Wasserstein 
barycenter of the $m$ measures. For larger values of $\tau$, however, the 
barycenter can cover the input measures or possibly none of them ($\tau=$1e-2), 
giving rise to a constant barycenter in the limit.

\subsection{Applications to the JKO scheme}
\label{subsec:jko}
\begin{figure}[ht]
  \centering
  \begin{subfigure}{0.4\textwidth}
    \centering
    \includegraphics[width=\linewidth]{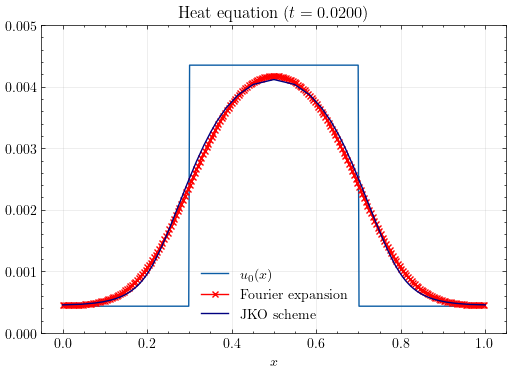}
  \end{subfigure}
  \begin{subfigure}{0.4\textwidth}
    \centering
    \includegraphics[width=\linewidth]{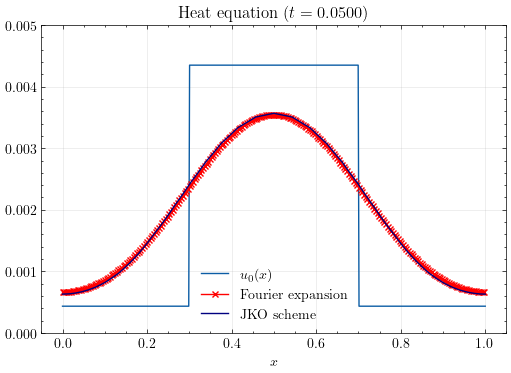}
  \end{subfigure}
  \begin{subfigure}{0.4\textwidth}
    \centering
    \includegraphics[width=\linewidth]{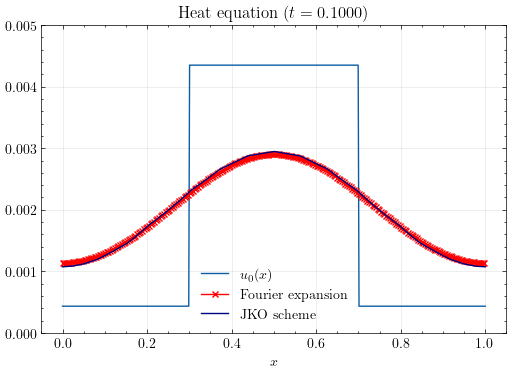}
  \end{subfigure}
  \begin{subfigure}{0.4\textwidth}
    \centering
    \includegraphics[width=\linewidth]{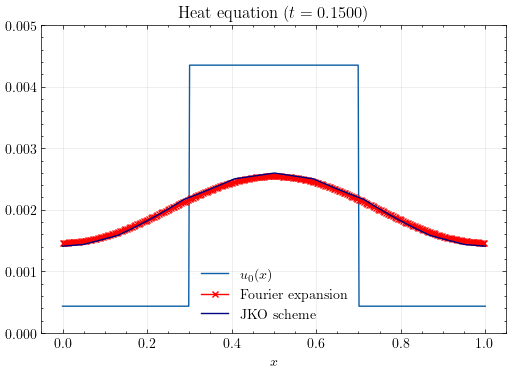}
  \end{subfigure}
  \caption{Simulation of the heat equation using unbalanced optimal transport.}
  \label{fig:heat-eq}
\end{figure}

As a final illustration, we show how our method can be used in computing the 
JKO scheme for Wasserstein gradient flows~\cite{santambrogio2025optimal}. To 
this end, we propose a toy experiment consisting in computing the JKO scheme 
on a grid $(x_k:= x_0 + kh)_{k\in\IntInterval{1}{n}}$ where $x_0\in\R$ and 
$h>0$. This amounts to computing the following iterates 
\begin{equation}
\label{eq:jko}
    \bm{u}_{k+1} = (\tpplan^k)^\top\indic
    \quad
    \mathrm{where}
    \quad 
    \tpplan^k=\underset{\tpplan\geq 0,\tpplan\indic =\bm{u}^k}{\mathrm{argmin}} 
    ~\tpplan \!:\! \costmat + \frac{1}{\tau}\KL(\tpplan^\top \indic,\indic/n)
\end{equation}
using $\costmat_{ij} := (x_i-x_j)^2$ and starting from $u_0 \in \R$. Under 
suitable conditions on $h/\tau$, the iterates~\eqref{eq:jko} converge to the 
solution of the heat equation $\partial_t u = \Delta u$ with initial condition 
$u(0,\cdot) = u_0$ and no-flux condition at the 
boundaries~\cite{hraivoronska2025convergence}). The transport plans computed 
in~\eqref{eq:jko} are obtained by solving a semi-unbalanced optimal transport 
with $\KL$ relaxation. Figure~\ref{fig:heat-eq} presents the solutions 
obtained using Algorithm~\ref{alg:adapted-ns}, and shows that our approach 
indeed approximates the JKO scheme.

\section{Conclusion}
\label{sec:conc}

In this work, we proposed a framework inspired by the network simplex method 
for (discrete) unbalanced and semi-unbalanced optimal transport. Our method is 
storage-efficient compared to existing schemes, while being amenable to a 
complexity analysis. Moreover, it can be implemented in closed form for 
quadratic and Kullback-Leibler variants. Our numerical results show that the 
adapted network simplex algorithm is competitive with mature solvers on 
several tasks, including problems that rely on unbalanced optimal transport 
as a subroutine.

The focus of our work was on obtaining a simple, practical version of the 
network simplex. Still, exploring variants of our method with improved 
complexity guarantees is a possible continuation of this paper. An extensive 
study of our method's potential for Wasserstein barycenter problems, from 
both a theoretical and a numerical perspective, is also a potential axis for 
future research.
%

\bibliographystyle{plain}
\bibliography{biblio}



\end{document}